\DeclareMathAlphabet{\mathpzc}{OT1}{pzc}{m}{it}
\numberwithin{equation}{section} \allowdisplaybreaks
\newcommand{\net}{\text{VarMiON }}
\begin{document}

\title{Variationally Mimetic Operator Networks}
\date{}

% \author[1]{Dhruv Patel \thanks{dvpatel@stanford.edu}}
% \author[2]{Deep Ray \thanks{deepray@usc.edu}}
% \author[3]{Michael R A Abdelmalik \thanks{M.Abdel.Malik@tue.nl}}
% \author[4]{Thomas J R Hughes \thanks{hughes@oden.utexas.edu}}
% \author[2]{Assad A Oberai \thanks{aoberai@usc.edu}}
% \affil[1]{Department of Mechanical Engineering, Stanford University}
% \affil[2]{Department of  Aerospace and Mechanical Engineering, University of Southern California}
% \affil[3]{Department of Mechanical Engineering, Eindhoven University of Technology}
% \affil[4]{Oden Institute for Computational Engineering and Sciences, University of Texas at Austin}

\author[1]{Dhruv Patel \thanks{dvpatel@stanford.edu}}
\author[2]{Deep Ray \thanks{deepray@umd.edu}}
\author[3]{Michael R A Abdelmalik \thanks{M.Abdel.Malik@tue.nl}}
\author[4]{Thomas J R Hughes \thanks{hughes@oden.utexas.edu}}
\author[5]{Assad A Oberai \thanks{aoberai@usc.edu}}
\affil[1]{Department of Mechanical Engineering, Stanford University}
\affil[2]{Department of Mathematics, University of Maryland at College Park}
\affil[3]{Department of Mechanical Engineering, Eindhoven University of Technology}
\affil[4]{Oden Institute for Computational Engineering and Sciences, University of Texas at Austin}
\affil[5]{Department of Aerospace and Mechanical Engineering, University of Southern California}

\maketitle

\begin{abstract}
    In recent years operator networks have emerged as promising deep learning tools for approximating the solution to partial differential equations (PDEs). These networks map input functions that describe material properties, forcing functions and boundary data to the solution of a PDE. This work describes a new architecture for operator networks that mimics the form of the numerical solution obtained from an approximate variational or weak formulation of the problem. The application of these ideas to a generic elliptic PDE leads to a variationally mimetic operator network (\net). Like the conventional Deep Operator Network (DeepONet) the \net is also composed of a sub-network that constructs the basis functions for the output and another that constructs the coefficients for these basis functions. However, in contrast to the DeepONet, the architecture of these sub-networks in the \net is precisely determined. An analysis of the error in the \net solution reveals that it contains contributions from the error in the training data, the training error, the quadrature error in sampling input and output functions, and a ``covering error'' that measures the distance between the test input functions and the nearest functions in the training dataset. It also depends on the stability constants for the exact solution operator and its \net approximation. The application of the \net to a canonical elliptic PDE and a nonlinear PDE reveals that for approximately the same number of network parameters, on average the \net incurs smaller errors than a standard DeepONet and a recently proposed multiple-input operator network (MIONet). Further, its performance is more robust to variations in input functions, the techniques used to sample the input and output functions, the techniques used to construct the basis functions, and the number of input functions. 
    Moreover, it consistently outperforms baseline methods at various dataset sizes.
    \textit{Keywords: variational formulation, deep neural operator, deep operator network, error analysis.}
\end{abstract}
\section{Introduction}

Over the last decade there has been significant interest in the use of Deep Learning (DL) based methods for solving problems that emerge from physical principles. This includes the so-called Physics-Informed Neural Networks (PINNs) that were introduced in the 1990s \cite{lagaris2000neural} (though not under that name) and then further developed more recently \cite{raissi2019physics}. There have been several interesting extensions of PINNs
%\dradd{that include networks that can solve stochastic systems \mbox{\cite{zhang2019quantifying}}, 
%architectures that resemble the partition of unity basis functions \cite{??}, and strategies aimed at improving the training of these networks \mbox{\cite{krishnapriyan2021characterizing}}}{ 
and theoretical investigations into their approximations properties, a selective list of which includes \cite{zhang2019quantifying,krishnapriyan2021characterizing,mao2020,yang2021,jagtap2020,mishra2021,mishra2022,shukla2021,deryck2022,cuomo2022}. However, one shortcoming of these techniques is that the fully trained network represents the solution of a single instance of the problem. If a slightly different problem is to be solved, for example, by altering the forcing function for a given partial differential equation (PDE), the network has to be retrained, which can be an arduous task.

%Review of DL-based methods used to solve physics-based problems. Methods that solve one instance of the physical problem. These include PINNs and their precursors. An extension to these methods is when they are used to solve a family of problems defined by one or more parameters. 

Operator Networks address this challenge directly. Rather than focusing on a single instance of the solution to a given system, they approximate the operator that maps the input functions to the solution. For example, for a physical problem governed by a system of PDEs (which is the focus of this work), the input would consist of functions that specify the initial conditions, the boundary conditions, the forcing terms and the material properties, while the output function would be the solution of the system of PDEs. The benefit of the operator point of view is obvious. Once the network is trained it can be used to solve multiple instances of a problem corresponding to different input functions. This makes it an ideal surrogate model for many-query tasks like uncertainty quantification and optimization that require solutions corresponding to multiple instances of input functions. 

%A different class of methods view the solution as an operator that maps input functions to output functions. For a system governed by a system of PDEs (which is the focus of this work), the input functions would consists of initial conditions, boundary conditions, forcing functions and material properties, while the output function would the solution of the system of PDEs. The benefit of the operator point of view is that one the network is trained it can be used to solve multiple instances of problem corresponding to different input functions. 

Broadly speaking, two distinct types of Operator Networks have been proposed and implemented. 
%\dpadd{One of these is often referred to as a DeepONet, and was introduced in}{} \cite{chen1995universal} \dpadd{}{
One of the first operator network architecture was first introduced in  \cite{chen1995universal} %\dpadd{}{
along with universal approximation guarantees. This result was then adapted to deep neural networks in \cite{lu2021learning} and the resulting model was called a DeepONet. In a DeepONet, the overall network is split into a branch and a trunk sub-network, and the solution is represented as a dot product between their outputs. The branch network accepts as input, samples of the input functions at discrete points and maps these to a latent vector of a fixed dimension. On the other hand, the trunk network maps the spatial coordinates of the problem to a latent vector of the same fixed dimension. The solution is obtained by performing a dot-product between these vectors. Given this, the trunk network can be viewed as a network that constructs a basis for representing the solution, and the branch network can be viewed as a network that constructs the coefficients for the basis functions. 

%\dradd{}{(We should also talk about some of the recent theoretical papers about DeepONets and how our work is different.)}

There has been significant recent work on extending and improving DeepONets. This includes reducing the amount of labeled data (the number of spatial points where the solution is required) needed to train the network \cite{wang2021learning} by adding terms to the loss function that are driven by the residual of the original system of equations. For physical problems that are derived from variational principles, the same benefit can also be achieved by encoding the variational principle into the loss function \cite{goswami2022physics}. For example, if the governing equations are obtained from the minimization of the total potential energy, then the potential energy of the system can be added to the overall loss function for the network. Other interesting work on DeepONets includes their extension to approximating stochastic operators \cite{yang2022scalable}, to operators associated with multiphysics problems \cite{mao2021,cai2021}, to operators with multiple inputs and outputs \cite{Jin2022, tan2022enhanced}, and to instances where the sensor points, which are the points where the input functions are queried, are allowed to vary from one sample to another \cite{prasthofer2022}.

There has also been significant work on analyzing the approximation properties of DeepONets. This includes the original universal approximation result for a shallow network \cite{chen1995universal}, a variant using radial basis functions (RBFs) \cite{chen95rbf} and the recent extension to deep networks \cite{lanthaler2022error}. In the same work \cite{lanthaler2022error}, the authors have also established error bounds for DeepONets used to approximate a broad class of operators. These bounds quantify the rate of growth in the network parameters as a function of the desired error in its solution. For a broad class of operators it is shown that this rate is exponential, however for specific operators driven by solutions to PDEs it is much milder.

Distinct from DeepONets is the work on Neural Operators \cite{kovachki2021}, which includes Graph Neural Operators, Fourier Neural Operators and their variants \cite{li2020fourier}. Neural Operators can be thought of as  direct extensions of typical deep neural networks to infinite dimensional functions. Like a typical neural network, they are composed of multiple layers, where each layer performs a linear and a nonlinear operation. However, in contrast to a typical deep neural network, the operations are performed on functions rather than vectors. The non-linear operation is performed using standard activation functions that are applied point-wise, while the linear operation is performed by local (affine) and non-local operators. Different choices of implementing the non-local operators lead to different types of Neural Operators. These include Fourier Neural Operators \cite{li2020fourier}, Multipole Neural Operators \cite{li2020multipole} and Graph Neural Operators \cite{li2020neural}. A physics-informed variant of Neural Operators which combined both data and PDE constraints was proposed in \cite{pino}. Recently, analytical estimates that quantify the performance of Fourier Neural Operators \cite{kovachki2021universal} have been developed. These include a universal approximation theorem and growth rates in complexity as a function of desired error when these networks are used to solve canonical problems like the steady-state heat conduction equation and the incompressible Navier Stokes equations.

In this manuscript we ask the question whether operator networks can be informed by the weak, or variational, formulations that are used to approximate solutions to PDEs. These formulations have traditionally been used to develop finite element methods, spectral methods, and projection-based reduced order models. Specifically, we consider the solution of an elliptic PDE and a nonlinear advection-diffusion-reaction PDE with spatially varying coefficients, forcing functions, and Neumann boundary data. The application of a standard method, like the Galerkin method, to this problem leads to a numerical solution with a very specific structure. Motivated by this structure, we propose a Variationally Mimetic Operator Network (\net). Thereafter, we demonstrate that this network has several special features. These include:
\begin{enumerate}
    \item A computationally efficient network structure. For canonical linear and nonlinear problems we demonstrate that for roughly the same number of network parameters, the variationally mimetic network produces solutions that are significantly more accurate than the solutions produced by a generic DeepONet or MIONet \cite{Jin2022}. This improved performance holds for a network (a) with multiple input functions, (b) input sensors that are distributed randomly, or distributed in a spatially uniform grid, and (c) for a trunk network comprised of basis functions that employ ReLU activations, or those constructed from a span of radial basis functions. We also demonstrate that the accuracy of the \net is more robust. That is, when considering solutions corresponding to a distribution of input functions, the distribution of error has lower variance than a conventional DeepONet. Moreover, the VarMiON consistently outperforms both the DeepONet and the MIONet at varying cardinality of training set demonstrating its usefulness in low-data regimes.
    \item Motivated by the error analysis for methods like the finite element method, we derive an \textit{a-priori} error estimate for the solutions of the \net.  This analysis  reveals that the overall network error can be reduced by (a) training it with more accurate solutions, (b) training it with a large data set, (c) by designing branch networks that are stable with respect to perturbations in their input, and (b) by using larger number points for sampling the input and output functions. Further, the variationally-mimetic structure of the network enables a precise definition of the constants that appear in the error estimate. These constants are obtained from the discrete operators (matrices) that appear in the network, which can be easily evaluated for a trained network. 
    \item The development of a workflow that couples an efficient numerical solver for a given class of problems and a \net that is trained using solutions produced by the solver. The solver is used to train the \net, and to guide the design of the higher level structure of the network, such as selecting the nature of branches (linear/non-linear), determining the format of the branch output (matrix or vector), specifying how branches interact with each other (via matrix-vector product, dot product, or Hadamard product), and interpreting  each constituent of the network (such as the trunk network serving as a surrogate for the basis functions of the numerical solver). Further, the error analysis for the \net includes the error in the solver. 

    %A tight coupling between Deep Operator Networks and numerical methods like the finite element or spectral methods that are commonly used to solve physics-driven problems. This coupling occurs at two instances in the overall workflow discussed in this manuscript. First, when they are used to generate the solutions for training the DeepONet. Second, when the structure of the discrete operator constructed within these methods is used to design the architecture of the DeepONet. 
%    \item A clean separation of the approximation used for generating basis functions from other components of the network. This allows us to introduce basis functions that are optimal in some sense for a given PDE. We utilize this flexibility to design accurate networks with only a small number of parameters in the trunk, thereby allowing for a more expressive branch network.  
\end{enumerate}

%There has also been significant recent work in analyzing the approximation properties of the DeepONets by quantifying the components of error in their approximation \cite{??}.

%While these networks do not explicitly split the operator into branch and trunk networks, they (need to put this in the context of DeepONets). 

%Review of operator networks: DeepONet, Graph Neural Operators, Fourier Neural Operators, others.....? Also mention extensions that include adding uncertainty and reducing the training data by building loss functions that encode the residual of the PDEs. 

The format of the remainder of this paper is as follows. In Section \ref{sec:pde}, we introduce the model linear PDE and present its variational discretization. This motivates the architecture of the variationally mimetic operator network for canonical linear problems which is introduced in Section \ref{sec:net}. In the following section (Section \ref{sec:err_anal}) we present an analysis of the generalization error of the network.  Thereafter, in Section \ref{sec:nonlinear}, we present the extension of the \net to nonlinear PDEs. In Section \ref{sec:num} we present numerical results that demonstrate the benefit of the variationally mimetic architecture for both linear and non-linear problems. In particular, we demonstrate that for approximately the same number of network parameters, the \net produces more accurate solutions as compared to a conventional DeepONet and MIONet. It can also be used to introduce ``optimal'' basis functions, which further improve its performance. We end with conclusions in Section \ref{sec:con}.

%also provides a convenient way to introduce  

\section{PDE model and discretization} 
\label{sec:pde}
Let $\Omega \in \Ro^d$ be an open, bounded domain with piecewise smooth boundary $\Gamma$. The boundary is further split into the Dirichlet boundary $\Gamma_g$ and natural boundary $\Gamma_\eta$, with $\Gamma = \Gamma_g \cup \Gamma_\eta$. Define the space $H^1_g = \{u \in H^1(\Omega) \ : \ u\big|_{\Gamma_g} = 0\}$. We consider the following scalar elliptic boundary value problem
\begin{equation}\label{eqn:pde}
\begin{aligned}
\mathcal{L}(u(\x);\theta(\x)) &=  f(\x), \quad &&\forall \ \x \in  \Omega, \\
\mathcal{B}(u(\x);\theta(\x)) &= \eta (\x), \quad &&\forall \ \x \in  \Gamma_\eta,\\
u(\x) &= 0, \quad &&\forall \ \x  \in  \Gamma_g,
\end{aligned}
\end{equation}
where $\mathcal{L}$ is assumed to be a second-order elliptic operator, $\mathcal{B}$ is the natural boundary operator, $f \in \dbF \subset L^2(\Omega)$ is the source term, $\eta \in \dbN \subset L^2(\Gamma_\eta)$ is the flux data, and $\theta \in \mathcal{T} \subset L^\infty(\Omega)$ is some spatially varying material parameter, such as thermal conductivity or permeability. 

The variational formulation of \eqref{eqn:pde} is given by: find $u \in H^1_g$ such that $\forall \ w \in H^1_g$
\begin{equation}\label{eqn:weak}
a(w,u;\theta) = (w,f) + (w,\eta)_{\Gamma_\eta},
\end{equation}
where $(.,.)$ and $(.,.)_{\Gamma_\eta}$ are respectively the $L^2(\Omega)$ and $L^2(\Gamma_\eta)$ inner-products, while $a(.,.;\theta)$ is the associated bilinear form. We assume that we are working with operators $\mathcal{L}$ possessing requisite properties, such as uniform ellipticity and continuity, to ensure that \eqref{eqn:weak} is well-posed.
We refer interested users to \cite{gilbarg2001elliptic,brenner2002mathematical} for additional details.

Consider the solution operator 
\begin{equation}\label{eqn:soln_op}
    \Sop:\dbX = \dbF \times \mathcal{T} \times \dbN  \longrightarrow \dbV \subset H^1_g, \quad \Sop(f,\theta,\eta) = u(.;f,\theta,\eta)
\end{equation}
which maps the data $(f,\theta,\eta)$ to the unique solution $u(,;f,\theta,\eta)$ of \eqref{eqn:weak}. Our goal is to approximate the operator $\Sop$ using a \net. 

For everything that follows, we assume that $\Omega$, $\Gamma_\eta$ and $\Gamma_g$ are fixed for a given PDE model, while the various quantities appearing in \eqref{eqn:pde} are dimensionless.

\subsection{Numerical solution}\label{sec:fem}
Training the \net requires solutions of \eqref{eqn:weak} for varying $(f,\theta,\eta) \in \dbX$. In the absence of analytical expressions, \eqref{eqn:weak} can be approximately solved using a suitable numerical solver. We consider the class of solvers that approximate $\dbV$ by the space $\dbV^h$ spanned by a set of continuous basis functions $\{\phi_i(\x)\}_{i=1}^q$. For instance, we could use a FEM or POD basis approximating the solutions of \eqref{eqn:weak} \cite{hughes2012finite,berkooz1993proper}. Then any function $v^h \in \dbV^{h}$ can be be expressed as a linear combination of the finite basis
\[
v^{h}(\x) = v_i \phi_i(\x) = \V^\top \bPhi(\x), \quad \V=(v_1,\cdots,v_q)^\top, \quad \bPhi(\x) = (\phi_1(\x),\cdots,\phi_q(\x))^\top.
\]
We also define the restricted space $\dbV^h\big|_{\Gamma_\eta} = \{v\big|_{\Gamma_\eta} : v \in \dbV^h\}$ for the boundary data.
Consider the projector
\begin{equation}\label{eqn:data_projector}
\mathcal{P} : \dbX \rightarrow \dbX^h :=\dbF^h \times \dbT^h \times \dbN^h \subset \dbV^h \times \dbV^h \times \dbV^h\big|_{\Gamma_\eta}, \qquad \mathcal{P}(f,\theta,\eta) = (f^h,\theta^h,\eta^h) 
\end{equation}
where the given PDE data is approximated (projected) as 
\begin{equation}\label{eqn:data_disc_app}
f^h(\x) =  \F^\top \bPhi(\x), \quad \theta^h(\x) = \Tb^\top \bPhi(\x), \quad 
\eta^h(\x) = \N^\top \bPhi(\x)\big|_{\Gamma_\eta}.
\end{equation}
The coefficients $\F,\Tb,\N$ will depend on the choice of the basis functions (also see Remark \ref{rem:coef}). If the approximate solution in $\dbV^h$ is represented as $u^h(\x) = \U^\top \bPhi(\x)$, then using \eqref{eqn:data_disc_app} in \eqref{eqn:weak} gives us the discrete weak formulation (see \cite{hughes2012finite}, for example)
\begin{equation}\label{eqn:d_weak}	
\K(\theta^h) \U = \M \F + \Mt \N
\end{equation}
where the matrices are given by
\begin{equation}\label{eqn:mat}	
K_{ij}(\theta^h) =a(\phi_i,\phi_j;\theta^h), \quad M_{ij} = (\phi_i,\phi_j), \quad \widetilde{M}_{ij} = (\phi_i,\phi_j)_{\Gamma_\eta} \quad 1 \leq i,j \leq q.
\end{equation}
Note that the matrix $\M$ will be invertible by virtue of its positive-definiteness. In order to recover the solution coefficients $\U$ from \eqref{eqn:d_weak}, we make the following assumption.
\begin{assumption}\label{ass:invert_K}
The basis $\{\phi_i(\x)\}_{i=1}^q$ is chosen such that the matrix $\K(\theta^h)$ is invertible for all $\theta^h \in \dbT^h$. Note that this is equivalent to requiring the spectrum of $\K(\theta^h)$ to be bounded away from 0.
\end{assumption}
We can now define the discrete solution operator 
\begin{equation}\label{eqn:d_soln_op}
\Sop^h: \dbV^h \times \dbV^h \times \dbV^h\big|_{\Gamma_\eta} \rightarrow \dbV^h, \quad \Sop^h(f^h,\theta^h,\eta^h) := u^h(.;f^h,\theta^h,\eta^h) = (\B(f^h,\theta^h) + \Bt(\eta^h,\theta^h))^\top \bPhi
\end{equation}
where
\begin{equation}\label{eqn:Bs}
\B(f^h,\theta^h) = \K^{-1}(\theta^h) \M \F, \quad \Bt(\eta^h,\theta^h) = \K^{-1}(\theta^h) \Mt \N.    
\end{equation}

\begin{remark}
\label{rem:coef}
Given $(f,\theta,\eta) \in \dbX$, the coefficients in \eqref{eqn:data_disc_app} can be evaluated as $\F = \M^{-1} \ol{\F}$ and $\Tb = \M^{-1} \ol{\Tb}$, where $\ol{F}_i = (f,\phi_i)$ and $\ol{\Theta}_i = (\theta,\phi_i)$ for $1\leq i \leq q$. Let us assume that $q^\prime \leq q$ basis functions do not vanish (almost everywhere) on $\Gamma_\eta$, which we enumerate as $\{\phi_{R(j)} \}_{j=1}^{q^\prime}$ with $R(j)$ being the global index. Then the $q^\prime \times q^\prime$ matrix $\mathring{\M}$ given by $\mathring{M}_{ij} = (\phi_{R(i)},\phi_{R(j)})_{\Gamma_\eta}$ will be a maximal invertible sub-matrix of $\Mt$. This is used to uniquely define the coefficients of $\eta^h(\x)$ as 
\begin{equation*}
\mathring{\N} = \mathring{\M}^{-1}\ol{\N}, \quad \ol{H}_j = (\eta,\phi_{R(j)})_{\Gamma_\eta} \ \forall \ 1 \leq j \leq q^\prime, \quad H_i = \begin{cases} \mathring{H}_j & \quad \text{if } i = R(j) \text{ for some } 1 \leq j \leq q^\prime\\ 0& \quad \text{otherwise}\end{cases}.
\end{equation*}
\end{remark}

Note that Remark \ref{rem:coef} implies that the projector $\mathcal{P}:\dbX \rightarrow \dbX^h$ is a linear operator.

\section{Variationally mimetic operator network}\label{sec:net}
The \net architecture is motivated by the discrete weak variational form \eqref{eqn:d_weak}. The PDE data $(f,\theta,\eta) \in \dbX$ is fed into the \net, with $f$ and $\theta$ sampled at the sensor nodes $\{\xh_i\}_{i=1}^k$ while $\eta$ is sampled at the boundary sensor nodes $\{\xh_i^b\}_{i=1}^{k^\prime}$ on $\Gamma_h$. We define the input vectors
\begin{equation}\label{eqn:input}
\Fh = (f(\xh_1),\cdots,f(\xh_k))^\top, \quad \Th = (\theta(\xh_1),\cdots,\theta(\xh_k))^\top, \quad \Nh = (\eta(\xh^b_1),\cdots,\eta(\xh^b_{k^\prime}))^\top,
\end{equation}
given by the data sensing operator $\Ph$ as
\begin{equation}\label{eqn:sensing_op}
    \Ph: \dbX \rightarrow \Ro^k \times \Ro^k \times \Ro^{k^\prime}, \qquad \Ph(f,\theta,\eta) = (\Fh,\Th,\Nh).
\end{equation}

\begin{figure}[htbp]
\begin{center}
\includegraphics[width=\textwidth]{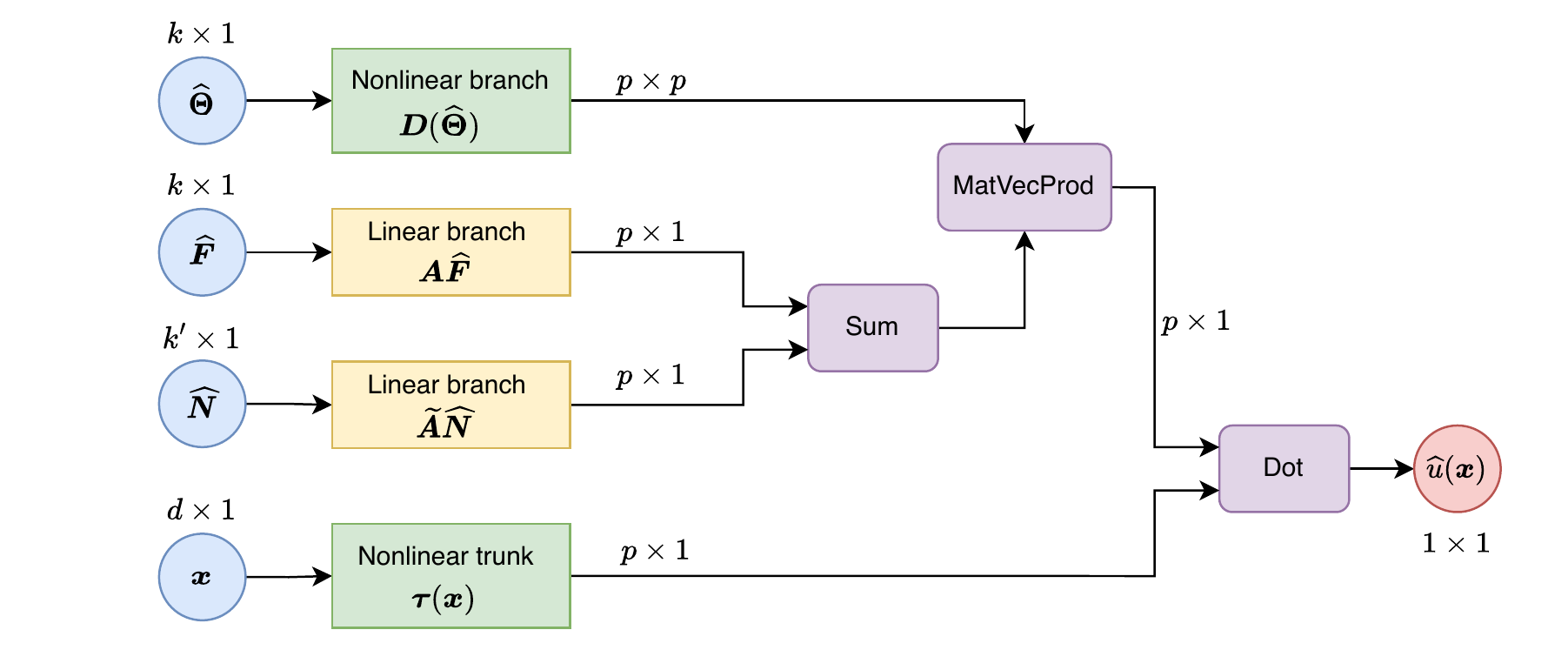}
\caption{\net architecture.}
\label{fig:varmion}
\end{center}
\end{figure}

We consider the \net shown in Figure \ref{fig:varmion}, which comprises:
\begin{itemize}
\item a \textbf{non-linear branch} taking the input $\Th \in \Ro^k$, which is transformed into a matrix output $\D(\Th) \in \Ro^{p \times p}$. Here $p$ is the latent dimension of the \net.

\item a \textbf{linear branch} taking the input $\Fh \in \Ro^k$ and transforming it as $\A \Fh$, where $\A \in \Ro^{p \times k}$ is a learnable matrix. The output of this branch is acted upon by the matrix $\D$ to give $\bbeta(\Fh,\Th) = \D(\Th) \A \Fh \in \Ro^p$.

\item a \textbf{linear branch} taking the input $\Nh \in \Ro^{k^\prime}$ and transforming it as $\At \Nh$, where $\At \in \Ro^{p \times k^\prime}$ is a learnable matrix. The output of this branch is acted upon by the matrix $\D$ to give $\bbetat(\Nh,\Th) = \D(\Th) \At \Nh \in \Ro^p$.

\item a \textbf{non-linear trunk} taking the input $\x \in \Ro^d$, which gives the output $\btau(\x) = (\tau_1(\x), \cdots, \tau_p(\x))^\top$, where each $\tau_i : \Ro^d \rightarrow \Ro$ is a trainable network. 
\end{itemize}

Let $\dbV^\tau$ be the space spanned by the trained trunk functions $\tau$. Then the final \net operator is given by
\begin{equation}\label{eqn:net}
    \widehat{\Sop} : \Ro^k \times \Ro^k \times   \Ro^{k^\prime} \rightarrow \dbV^\tau, \qquad \widehat{\Sop}(\Fh,\Th,\Nh) = \widehat{u}(.;\Fh,\Th,\Nh) = \left(\bbeta(\Fh,\Th) + \bbetat(\Nh,\Th) \right)^\top \btau.
\end{equation}
Notice the similarity between \eqref{eqn:d_soln_op} and \eqref{eqn:net}. In particular, $\B$ (respectively $\Bt$) have the same structure as $\bbeta$ (respectively $\bbetat$). We further demonstrate in Section \ref{sec:Kapprox} that $\D$ can be seen as a coarse scale approximation of $\K^{-1}$.

The discrete operator $\Sop^h$ is used to generate samples to train the \net. This is done in the following manner:
\begin{enumerate}
    \item For $1\leq j \leq J$, consider distinct samples $(f_j,\theta_j,\eta_j) \in \dbX$. 
    \item Use the projector \eqref{eqn:data_projector} to obtain the discrete approximations $(f^h_j,\theta^h_j,\eta^h_j) \in \dbX^h$. 
    \item Find the discrete numerical solution $u^h_j = \Sop^h(f^h_j,\theta^h_j,\eta^h_j)$.
    \item Use \eqref{eqn:sensing_op} to generate the \net input vectors $(\Fh_j,\Th_j,\Nh_j) = \Ph(f_j,\theta_j,\eta_j)$.
    \item Select a set of output nodes $\{\x_l\}_{l=1}^{L}$. For each $1 \leq j\leq J$, sample the numerical solution at these nodes as $u^h_{jl} = u^h_j(\x_l)$. 
    \item Finally, collect all the inputs and output labels to form the training set containing $JL$ samples
    \begin{equation}\label{eqn:tset}
        \mathbb{S} = \{(\Fh_j,\Th_j,\Nh_j,\x_l,u_{jl}^h) : 1 \leq j \leq J, \ 1 \leq l \leq L\},
    \end{equation}
    where $(\Fh_j,\Th_j,\Nh_j)$ is the input to the branch subnets, $\x_l$ is the trunk input, and $u_{jl}^h$ is the target output. 
\end{enumerate}

\begin{remark}\label{rem:pointwise}
To allow a point-wise evaluation of the input and output to the \net, we need to assume $\dbF \subset C(\Omega)$, $\dbN \subset C(\Gamma_\eta)$, $\mathcal{T} \subset C(\ol{\Omega})$ and $\mathcal{V} \subset C(\ol{\Omega})$.
\end{remark}

We also need to define a suitable loss/objective function which needs to be optimized to train the \net. First, let $\{w_l\}_{l=1}^L$ be the weights corresponding to the output nodes $\{\x_l\}_{l=1}^L \in \Omega$ used to describe the following quadrature for the square of function $g : \Omega \rightarrow \Ro$
\begin{equation}\label{eqn:quad_out}
\left | \sum_{l=1}^L w_l g^2(\x_l) -  \int_\Omega g^2(\x) \ud \x \right| \leq  \frac{\cons_Q(g)}{L^{\gamma}},
\end{equation}
where the $\cons_Q$ may depend on the derivatives of $g$. The rate of convergence $\gamma$ will depend on the quadrature rule used. Let $\bpsi$ be the (vector of) trainable parameters of the \net, which we make explicit by representing the \net by $\Soph_\bpsi$. We define the loss function as
\begin{equation}
    \Pi(\bpsi) = \frac{1}{J} \sum_{j=1}^{J} \Pi_j(\bpsi) , \qquad \Pi_j(\bpsi) = \sum_{l=1}^{L} w_{l} \left(u^h_{jl} - \Soph_\bpsi(\Fh_j,\Th_j,\Nh_j)[\x_l] \right)^2. \label{eqn:loss}
\end{equation}
Note that $\Pi_j(\bpsi)  \approx  \|\Sop^h\circ \mathcal{P} (f_j,\theta_j,\eta_j) - \Soph_\bpsi \circ \Ph (f_j,\theta_j,\eta_j) \|^2_{L^2(\Omega)}$ by virtue of \eqref{eqn:quad_out}.
Then training the \net corresponds to solving the following optimization problem
\begin{equation}\label{eqn:train}
    \bpsi^* = \argmin{\bpsi}\Pi(\bpsi).
\end{equation}
In practice, \eqref{eqn:train} is typically solved using iterative algorithms such as stochastic gradient descent or Adam \cite{kingma2017adam}. Further, in the numerical results presented in Section \ref{sec:num}, we choose the output sensor nodes as random Monte Carlo nodes in the domain, with the corresponding quadrature weights set as $w_l = 1/L$.

\section{Error analysis}\label{sec:err_anal}
Our objective is to train a \net such that the true solution operator $\Sop$ is well-approximated by $\Soph \circ \Ph:\dbX \rightarrow \dbV^\tau$. We denote the corresponding error for any $(f, \theta,\eta)\in \dbX$ as
\begin{eqnarray}\label{eqn:err_true_net}
 \Er(f,\theta,\eta) &:=& \|\Sop(f,\theta,\eta) - \Soph\circ\Ph(f,\theta,\eta)\|_{L^2(\Omega)}.
\end{eqnarray}
which we also call the \textit{generalization error}. Additionally, we define the error between $\Sop$ and $\Sop^h$, i.e., the numerical approximation error, as
\begin{eqnarray}\label{eqn:err_true_num}
 \Er_h(f,\theta,\eta) &:=& \|\Sop(f,\theta,\eta) - \Sop^h\circ\mathcal{P}(f,\theta,\eta)\|_{L^2(\Omega)},
\end{eqnarray}
and that between $\Sop^h$ and the $\Soph$ as
\begin{eqnarray}\label{eqn:err_num_net}
 \Erh(f,\theta,\eta) &:=& \|\Sop^h\circ\mathcal{P}(f,\theta,\eta) - \Soph\circ\Ph(f,\eta,\theta )\|_{L^2(\Omega)} \notag \\
 &=& \|\Sop^h(f^h,\theta^h,\eta^h) - \Soph(\Fh,\Th,\Nh )\|_{L^2(\Omega)}.
\end{eqnarray}
Further, for any $(f,\theta,\eta),(f^\prime,\theta^\prime,\eta^\prime) \in \dbX$, we define the corresponding perturbation in the solution given by $\Sop$ as
\begin{eqnarray}\label{eqn:sop_pert}
 \Er_\text{stab}[(f,\theta,\eta),(f^\prime,\theta^\prime,\eta^\prime)] &:=&  \|\Sop(f,\theta,\eta) - \Sop(f^\prime,\theta^\prime,\eta^\prime)\|_{L^2(\Omega)},
\end{eqnarray}
and the \net operator $\Soph$ as
\begin{eqnarray}\label{eqn:soph_pert}
 \Erh_\text{stab}[(f,\theta,\eta),(f^\prime,\theta^\prime,\eta^\prime)] &:=& \|\Soph \circ \mathcal{P}(f,\theta,\eta) - \Soph\circ \mathcal{P}(f^\prime,\theta^\prime,\eta^\prime)\|_{L^2(\Omega)},
\end{eqnarray}
Then, for any training sample index $1 \leq j \leq J$, we obtain the estimate
\begin{eqnarray}\label{eqn:gen_err_breakup}
 \Er(f,\theta,\eta) &=& \|\Sop(f,\theta,\eta) - \Sop(f_j,\theta_j,\eta_j) + \Sop(f_j,\theta_j,\eta_j) - \Sop^h\circ \mathcal{P}(f_j,\theta_j,\eta_j) \notag \\
 && \ \ + \Sop^h\circ\mathcal{P}(f_j,\theta_j,\eta_j)- \Soph\circ\Ph(f_j,\theta_j,\eta_j) + \Soph\circ\Ph(f_j,\theta_j,\eta_j) - \Soph\circ\Ph(f,\theta,\eta)\|_{L^2(\Omega)} \notag\\
 &\leq& \|\Sop(f,\theta,\eta) - \Sop(f_j,\theta_j,\eta_j)\|_{L^2(\Omega)} + \|\Sop(f_j,\theta_j,\eta_j) - \Sop^h\circ \mathcal{P}(f_j,\theta_j,\eta_j)\|_{L^2(\Omega)} \notag \\
 && + \|\Sop^h\circ\mathcal{P}(f_j,\theta_j,\eta_j)- \Soph\circ\Ph(f_j,\theta_j,\eta_j)\|_{L^2(\Omega)}  +  \|\Soph\circ\Ph(f_j,\theta_j,\eta_j) - \Soph\circ\Ph(f,\theta,\eta)\|_{L^2(\Omega)} \notag \\
 &=& \Er_\text{stab}[(f,\theta,\eta),(f_j,\theta_j,\eta_j)] + \Er_h(f_j,\theta_j,\eta_j) + \Erh(f_j,\theta_j,\eta_j) \notag \\
 && + \Erh_\text{stab}[(f_j,\theta_j,\eta_j),(f,\theta,\eta)].
\end{eqnarray}
Thus, we need to find suitable bounds to each of the four terms on the right of the above expression, to bound the generalization error. These estimates are investigated in the following sections.

\subsection{Stability of $\Sop$}\label{sec:stable_sop}
The term \eqref{eqn:sop_pert} captures how much the output of $\Sop$ varies as the PDE-data is perturbed. Thus, bounding this term requires the solution operator of the PDE to be stable. This leads us to the following assumption.

\begin{assumption}\label{ass:stable_sop}
The solution operator $\Sop$ is stable with respect to the PDE data. In other words, for any  $(f,\theta,\eta),(f^\prime,\theta^\prime,\eta^\prime) \in \dbX$
\begin{align}\label{eqn:stable_sop}
\Er_\text{stab}[(f,\theta,\eta),(f^\prime,\theta^\prime,\eta^\prime)]\leq& \cons_\text{stab} \left( \|f - f^\prime \|_{L^2(\Omega)} + \|\theta - \theta^\prime \|_{L^2(\Omega)} + \|\eta - \eta^\prime \|_{L^2(\Gamma_\eta)}\right),
\end{align}
where $\cons_\text{stab}$ depends may depend on $\Omega$, $\Gamma_\eta$ and $\Gamma_g$.
\end{assumption}
A stability estimate of the type \eqref{eqn:stable_sop} is guaranteed if the underlying PDE model is well-defined, and have been explored for elliptic PDEs in \cite{bonito2013,iglesias2016}.

\subsection{Numerical approximation error}\label{sec:num_approx}
The term \eqref{eqn:err_true_num} characterizes the error introduced while generating the training set using the numerical solver to the PDE. This motivates the use of high-order solvers in the data generation process. For the purposes of our discussions, we make the following assumption about the numerical error.

\begin{assumption}\label{ass:num_approx}
Given the projection function \eqref{eqn:data_projector}, there exists an $\epsilon_h > 0$ such that the following error bound holds
\begin{equation}\label{eqn:num_approx}
    \Er_h(f,\theta,\eta) < \cons_h(f,\theta,\eta)\epsilon_h \quad \forall \ (f,\theta,\eta) \in \dbX
\end{equation}
where $\cons_h$ may depend on the given PDE data.
\end{assumption}

Moving forward, we assume that the PDE data is bounded which is also typical for the data used to train neural networks.

\begin{assumption}\label{ass:data_bnd}
Assume that the PDE data space $\dbX$ is compact. We also assume that the data is point-wise bounded. In other words, there exist constants $C_\dbF, C_\dbT, C_\dbN < \infty$ such that
\begin{equation}\label{eqn:data_bnd}
\begin{aligned}
    \|f\|_{L^\infty(\Omega)} \leq C_\dbF \ \forall \ f \in \dbF,\quad \|\theta\|_{L^\infty(\Omega)} \leq  C_\mathcal{T} \ \forall \ \theta \in \mathcal{T},\quad  \|\eta\|_{L^\infty(\Gamma_\eta)} \leq C_\dbN \ \forall \ \eta \in \dbN.
\end{aligned}
\end{equation}
\end{assumption}

We make a few remarks here:
\begin{enumerate}
    \item Estimates of the form \eqref{eqn:num_approx} are typically available depending on the underlying numerical solver used to obtain $u^h$. For instance, if a Galerkin finite element method with degree $r$ basis functions is used to solve \eqref{eqn:pde}, then under sufficient regularity of the solution we get $\epsilon_h \sim h^{r+1}$ where $h$ represents the size of the elements.
    \item We have assumed $\cons_h$ to be as general as possible by allowing it to depend on the triplet $(f,\theta,\eta)$. However, if the underlying method is of Galerkin-type, then by Galerkin orthogonality, $\cons_h$ will not depend on $f$ and $\eta$, i.e., $\cons_h = \cons_h(\theta)$. 
    \item In addition to the bounds assumed in Assumption \ref{ass:data_bnd}, we may require $\theta > 0$ to ensure the well-definedness of the underlying PDE, for instance the steady-state heat conduction problem considered in Section \ref{sec:num}. 
    \item With the assumption of compactness of $\dbX$, we can define the following finite-valued constants
    \begin{equation}\label{eqn:num_err_constants}
    \cons_{h,\mathbb{S}} = \max_{1 \leq j \leq J} \cons_h(f_j,\theta_j,\eta_j), \quad \cons_{h,\dbX} = \sup_{\dbX} \cons_h(f,\theta,\eta),
\end{equation}
with $\cons_{h,\mathbb{S}} \leq \cons_{h,\dbX}$.
\end{enumerate}

\subsection{Training error}
We now wish to find a bound for \eqref{eqn:err_num_net} when evaluated on the training set. We demonstrate that this is closely associated with the final training error (which need not be zero) after solving the optimization problem \eqref{eqn:train}.

We begin by introducing the notion of when the \net is said to be well-trained.
\begin{defn}[$\epsilon_t$-trained] \label{def:well_trained}
Let a \net with operator $\Soph$ be trained on a dataset $\mathbb{S}$ generated by the set $\{f_j,\theta_j,\eta_j\}_{j=1}^J$ and defined by \eqref{eqn:tset}. Let $\{\x_l\}_{l=1}^L$ and $\{w_l\}_{l=1}^L$ be the quadrature nodes and weights asociated with the \eqref{eqn:quad_out}. Then the \net is said to be $\epsilon_t$-trained for some $\epsilon_t > 0$ if the following estimate holds for the trained network
\begin{equation}\label{eqn:err_training_samples}
\Pi_j(\bpsi^*) < \epsilon_t \quad \forall \ 1 \leq j \leq J
\end{equation}
where $\Pi(\phi^*)_j$ is defined according to \eqref{eqn:loss}.
\end{defn}

Note that the satisfaction of the estimate \eqref{eqn:err_training_samples} can be monitored in practice while training the \net. The following result is a consequence of Definition \ref{def:well_trained}, the proof of which follows trivially from \eqref{eqn:loss}, \eqref{eqn:quad_out} and the relation $\sqrt{(a + b)} \leq (\sqrt{a} + \sqrt{b})$ for $a,b\geq 0$.

\begin{lemma}\label{lem:welltrained}
Let us define the (point-wise) error in approximating the numerical solution using the \net $\Soph$ for the $j-th$ training sample as 
\[
e_j = \Sop^h\circ \mathcal{P} (f_j,\theta_j,\eta_j) - \Soph_\bpsi \circ \Ph (f_j,\theta_j,\eta_j). 
\]
If $\Soph$ is $\epsilon_t$-trained, then the training error $\Pi(\bpsi^*) < \epsilon_t$ and 
\begin{equation}\label{eqn:lossapprox}
  \Erh(f_j,\theta_j,\eta_j) \leq \sqrt{\Pi_j(\bpsi^*)} + \frac{\sqrt{\cons_{Qj}}}{L^{\gamma/2}} \leq \sqrt{\epsilon_t} + \frac{\sqrt{\cons_{Q,\mathbb{S}}}}{L^{\gamma/2}}
\end{equation}
where $\cons_{Qj} = \cons_Q(e_j)$ is defined as in \eqref{eqn:quad_out} and $\cons_{Q,\mathbb{S}} = \max \limits_{1\leq j \leq J} \cons_{Qj}$.
\end{lemma}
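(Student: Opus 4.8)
The plan is to recognize that the loss contribution $\Pi_j(\bpsi^*)$ is nothing more than the quadrature rule \eqref{eqn:quad_out} applied to the square of the pointwise error $e_j$, and then to invoke the quadrature bound to pass from the discrete weighted sum back to the $L^2$ norm that defines $\Erh$. First I would dispose of the claim on the aggregate training error: since $\Pi(\bpsi^*) = \frac{1}{J}\sum_{j=1}^J \Pi_j(\bpsi^*)$ by \eqref{eqn:loss} and each summand satisfies $\Pi_j(\bpsi^*) < \epsilon_t$ by Definition \ref{def:well_trained}, averaging over $j$ immediately yields $\Pi(\bpsi^*) < \epsilon_t$.

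The crux is a clean identification at the output nodes. By the fifth step of the data-generation procedure the target is $u^h_{jl} = \Sop^h\circ\mathcal{P}(f_j,\theta_j,\eta_j)[\x_l]$, while the network output $\Soph_\bpsi(\Fh_j,\Th_j,\Nh_j)[\x_l]$ is exactly $\Soph\circ\Ph(f_j,\theta_j,\eta_j)[\x_l]$ evaluated at the same node. Hence each bracket in $\Pi_j$ equals $e_j(\x_l)$, so that $\Pi_j(\bpsi^*) = \sum_{l=1}^L w_l\, e_j^2(\x_l)$ is precisely the quadrature approximation to $\int_\Omega e_j^2 \,\ud\x = \|e_j\|_{L^2(\Omega)}^2 = \Erh(f_j,\theta_j,\eta_j)^2$, using the definition \eqref{eqn:err_num_net}.

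Applying \eqref{eqn:quad_out} with $g = e_j$ and keeping only the one-sided consequence gives $\|e_j\|_{L^2(\Omega)}^2 \leq \Pi_j(\bpsi^*) + \cons_{Qj}/L^{\gamma}$, where $\cons_{Qj} = \cons_Q(e_j)$. Taking square roots and invoking the elementary subadditivity $\sqrt{a+b} \leq \sqrt{a} + \sqrt{b}$ for $a,b\geq 0$ produces the first inequality of \eqref{eqn:lossapprox}; the second then follows at once from $\Pi_j(\bpsi^*) < \epsilon_t$ together with $\cons_{Qj} \leq \cons_{Q,\mathbb{S}} = \max_{1\leq j\leq J}\cons_{Qj}$.

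I do not expect any substantial obstacle — as the text anticipates, the argument is essentially bookkeeping. The only points that genuinely require care are, first, that \eqref{eqn:quad_out} controls the \emph{absolute} discrepancy between sum and integral, so one must retain the correct one-sided bound $\int_\Omega e_j^2 \leq \sum_l w_l e_j^2(\x_l) + \cons_{Qj}/L^{\gamma}$ rather than its reverse; and second, that $\cons_{Qj}$ be finite, which relies on $e_j$ possessing the regularity implicitly assumed for pointwise evaluation in Remark \ref{rem:pointwise} so that the quadrature constant, which may depend on derivatives of $e_j$, is well defined.
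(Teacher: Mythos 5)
Your proposal is correct and follows exactly the route the paper indicates: identifying $\Pi_j(\bpsi^*)$ as the quadrature approximation of $\|e_j\|_{L^2(\Omega)}^2$, applying the one-sided consequence of \eqref{eqn:quad_out}, and finishing with $\sqrt{a+b}\leq\sqrt{a}+\sqrt{b}$. The paper itself only remarks that the proof "follows trivially" from \eqref{eqn:loss}, \eqref{eqn:quad_out} and that inequality, and your write-up supplies precisely those details.
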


\subsection{Stability of $\Soph$}
Controlling the term \eqref{eqn:soph_pert} requires an estimate of the stability of \net operator $\Soph$. First, the assumption \ref{ass:data_bnd} leads to the following bounds for all $(f,\theta,\eta) \in \dbX$
\begin{equation}\label{eqn:l2bnd}
\begin{aligned}
\|\Fh\|_2 \leq \sqrt{k}\cons_\dbF, \quad
\|\Th\|_2 \leq  \sqrt{k} \cons_\mathcal{T}, \quad
\|\Nh\|_2 \leq \sqrt{k^\prime} \cons_\dbN.
\end{aligned}
\end{equation}

Next, to ensure the \net output is bounded, we make the following assumption.
\begin{assumption}\label{ass:bound_net}
The various sub-components of the \net are bounded. In other words, there exist positive constants $\cons_\A,\cons_\At,\cons_\tau, < \infty$ such that
\begin{equation}\label{eqn:net_bnds}
    \|\A\|_2 \leq \cons_\A,  \quad
    \|\At\|_2 \leq \cons_\At, \quad
    \|\btau\|_{L^2(\Omega)} \leq \cons_\tau,
\end{equation}
\end{assumption}
where it is understood that the $L^2$ norm of a vector-valued function is given by
\[
\|\btau\|_{L^2(\Omega)} = \left( \sum_{i=1}^p \|\tau_i\|^2_{L^2(\Omega)}\right)^{1/2}.
\]
The bounds in the above assumption are typically ensured by the fact the the input data to the \net is bounded, and the use of regularization techniques to control the magnitude of the trainable parameters of the \net.

We also rely on the following simple result to obtain several estimates moving forward.
\begin{lemma}\label{lem:ineq}
Let $\bm{f}\in L^2(\Omega;\Ro^n)$, $\bm{v}\in\Ro^n$ and define $\ell = \bm{v}^\top \bm{f}$. Then, $\ell \in L^2(\Omega)$ and
\[
\|\ell\|_{L^2(\Omega)} \leq  \|\bm{v}\|_2 \|\bm{f}\|_{L^2(\Omega)}.
\]
\end{lemma}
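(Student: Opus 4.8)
The plan is to reduce the statement to a pointwise Cauchy--Schwarz inequality in $\Ro^n$ followed by integration over $\Omega$, which is the natural route since the claimed bound is exactly the functional analogue of the finite-dimensional Cauchy--Schwarz estimate. First I would recall the paper's convention that, for a vector-valued function, $\|\bm{f}\|_{L^2(\Omega)}^2 = \sum_{i=1}^n \|f_i\|_{L^2(\Omega)}^2$, and observe that $\ell(\x) = \bm{v}^\top \bm{f}(\x) = \sum_{i=1}^n v_i f_i(\x)$ is measurable as a finite linear combination of the measurable components $f_i$.

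The core step is to apply the Cauchy--Schwarz inequality in $\Ro^n$ at (almost) every fixed $\x \in \Omega$, giving $|\ell(\x)| \leq \|\bm{v}\|_2 \, \|\bm{f}(\x)\|_2$, and hence $\ell(\x)^2 \leq \|\bm{v}\|_2^2 \sum_{i=1}^n f_i(\x)^2$. Integrating this pointwise inequality over $\Omega$ and interchanging the finite sum with the integral yields
\[
\int_\Omega \ell(\x)^2 \, \ud\x \leq \|\bm{v}\|_2^2 \sum_{i=1}^n \int_\Omega f_i(\x)^2 \, \ud\x = \|\bm{v}\|_2^2 \, \|\bm{f}\|_{L^2(\Omega)}^2.
\]
Because each $f_i \in L^2(\Omega)$, the right-hand side is finite, so $\ell \in L^2(\Omega)$; taking square roots then delivers the claimed inequality.

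There is no real obstacle here: the result is essentially a one-line consequence of Cauchy--Schwarz together with monotonicity of the integral. The only points requiring a moment's care are confirming measurability of $\ell$ (immediate, as noted above) and being explicit that the single pointwise bound simultaneously licenses both the membership $\ell \in L^2(\Omega)$ and the norm estimate. Since $\bm{v}$ is a constant vector, $\|\bm{v}\|_2$ factors cleanly out of the integral, so no additional regularity or boundedness assumptions are needed.
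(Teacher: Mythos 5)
Your proposal is correct and follows exactly the same route as the paper: apply the finite-dimensional Cauchy--Schwarz inequality pointwise inside the integral, then integrate and exchange the finite sum with the integral. The extra remarks on measurability and the factoring out of the constant $\|\bm{v}\|_2$ are fine but add nothing beyond the paper's one-line argument.
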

\begin{proof}
The result can obtained by a simple application of Cauchy-Schwarz
\begin{eqnarray*}
\|\ell\|^2_{L^2(\Omega)} = \int_\Omega \left(\bm{v}^\top \bm{f}(\x) \right)^2 \ud \x = \int_\Omega \left( \sum_{i=1}^n v_i f_i(\x) \right)^2 \ud \x &\leq& \int_\Omega \left( \sum_{i=1}^n v_i^2 \right) \left( \sum_{i=1}^n f_i^2(\x) \right) \ud \x \\
&=&\|\bm{v}\|^2_2 \sum_{i=1}^n \|f_i\|^2_{L^2(\Omega)} = \|\bm{v}\|^2_2 \|\bm{f}\|^2_{L^2(\Omega)}.
\end{eqnarray*}
\end{proof}

We can now prove the following stability estimate for the \net.
\begin{theorem}\label{thm:net_stability}
Assume the data is bounded in accordance to Assumption \ref{ass:data_bnd}. Define the set $\widehat{\dbT} = \{\Th : \Th = \{\theta(\xh_i)\}_{i=1}^k \ \forall \ \theta \in \dbT \} \subset \Ro^k$, where $\{\xh_i\}_{i=1}^k$ are the \net sensor nodes for $\theta$. Assume the non-linear branch $\D:\widehat{\dbT} \rightarrow \Ro^{p \times p}$ of the trained \net $\Soph$ is Lipschitz
    \begin{equation}\label{eqn:D_lip}
    \|\D(\widehat{\Tb}) - \D(\widehat{\Tb}^\prime) \|_2 \leq L_D \|\widehat{\Tb} - \widehat{\Tb}^\prime \|_2 \quad \forall \ \Th, \Th^\prime \in \widehat{\dbT},
\end{equation}
with Lipschitz constant $L_D$. Let $\|\D(\Th_0)\|_2$ be bounded for some $\Th_0 \in \widehat{\dbT}$. We also assume that the trained \net is bounded in the sense of Assumption \ref{ass:bound_net}.
 Then for any  $(f,\theta,\eta),(f^\prime, \theta^\prime,\eta^\prime) \in \dbX$, the following stability estimate holds
\begin{align}\label{eqn:net_stable}
\Erh_\text{stab}[(f,\theta,\eta),(f^\prime,\theta^\prime,\eta^\prime)] \leq& \widehat{\cons}_\text{stab} \left( \|\Fh - \Fh^\prime \|_2 + \|\Th - \Th^\prime \|_2 + \|\Nh - \Nh^\prime \|_2\right),
\end{align}
where $\widehat{\cons}_\text{stab}$ depends on the bounds in \eqref{eqn:data_bnd} and \eqref{eqn:net_bnds}, the number of sensor nodes, and $L_D$.
\end{theorem}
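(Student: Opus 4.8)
The plan is to work directly from the closed form \eqref{eqn:net} of the \net operator, reducing the $L^2(\Omega)$ estimate to finite-dimensional bounds on the coefficient vectors $\bbeta$ and $\bbetat$. Writing the difference of the two outputs as $(\Delta\bbeta + \Delta\bbetat)^\top \btau$, where $\Delta\bbeta = \bbeta(\Fh,\Th) - \bbeta(\Fh^\prime,\Th^\prime)$ and $\Delta\bbetat = \bbetat(\Nh,\Th) - \bbetat(\Nh^\prime,\Th^\prime)$, the trunk functions $\btau$ are common to both instances. Hence Lemma \ref{lem:ineq} and the trunk bound from Assumption \ref{ass:bound_net} give
\[
\Erh_\text{stab}[(f,\theta,\eta),(f^\prime,\theta^\prime,\eta^\prime)] \leq \|\btau\|_{L^2(\Omega)}\left(\|\Delta\bbeta\|_2 + \|\Delta\bbetat\|_2\right) \leq \cons_\tau\left(\|\Delta\bbeta\|_2 + \|\Delta\bbetat\|_2\right),
\]
which isolates the entire problem into bounding the two coefficient differences in the Euclidean norm.

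Each coefficient difference is a product of two data-dependent factors, so the key device is an add-and-subtract splitting. For $\bbeta = \D(\Th)\A\Fh$ I would write
\[
\Delta\bbeta = \D(\Th)\A(\Fh - \Fh^\prime) + \left(\D(\Th) - \D(\Th^\prime)\right)\A\Fh^\prime,
\]
and bound the two pieces by submultiplicativity. The second piece is controlled using the Lipschitz estimate \eqref{eqn:D_lip}, the matrix bound $\|\A\|_2 \leq \cons_\A$ from \eqref{eqn:net_bnds}, and the data bound $\|\Fh^\prime\|_2 \leq \sqrt{k}\,\cons_\dbF$ from \eqref{eqn:l2bnd}, producing a term proportional to $\|\Th - \Th^\prime\|_2$. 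The first piece yields a term proportional to $\|\Fh - \Fh^\prime\|_2$ but still carries the factor $\|\D(\Th)\|_2$, which must be bounded separately. The identical manipulation applied to $\bbetat = \D(\Th)\At\Nh$ produces analogous terms with $\At$, $\cons_\At$ and $\sqrt{k^\prime}\,\cons_\dbN$.

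The one remaining ingredient, and what I expect to be the only real obstacle, is that the hypotheses supply no direct bound on $\|\D(\Th)\|_2$ — only Lipschitz continuity and a single anchoring value $\|\D(\Th_0)\|_2$. I would close this gap with a second triangle inequality, $\|\D(\Th)\|_2 \leq \|\D(\Th) - \D(\Th_0)\|_2 + \|\D(\Th_0)\|_2 \leq L_D\|\Th - \Th_0\|_2 + \|\D(\Th_0)\|_2$, and then invoke the boundedness of the data: since $\|\Th\|_2,\|\Th_0\|_2 \leq \sqrt{k}\,\cons_\mathcal{T}$ by \eqref{eqn:l2bnd}, we have $\|\Th - \Th_0\|_2 \leq 2\sqrt{k}\,\cons_\mathcal{T}$, giving a uniform constant $\cons_D := 2L_D\sqrt{k}\,\cons_\mathcal{T} + \|\D(\Th_0)\|_2$ with $\|\D(\Th)\|_2 \leq \cons_D$ for every admissible $\Th$. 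This is exactly where Assumption \ref{ass:data_bnd} and the anchoring hypothesis on $\Th_0$ enter, and it is what converts the merely-Lipschitz branch into a bounded operator.

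Finally I would collect the three resulting contributions, group them by the factors $\|\Fh - \Fh^\prime\|_2$, $\|\Th - \Th^\prime\|_2$ and $\|\Nh - \Nh^\prime\|_2$, and define $\widehat{\cons}_\text{stab}$ as $\cons_\tau$ times the maximum of the three aggregated coefficients, namely $\cons_D\cons_\A$, the sum $L_D\cons_\A\sqrt{k}\,\cons_\dbF + L_D\cons_\At\sqrt{k^\prime}\,\cons_\dbN$, and $\cons_D\cons_\At$. By construction this constant depends only on the data bounds \eqref{eqn:data_bnd}, the network bounds \eqref{eqn:net_bnds}, the sensor counts $k,k^\prime$, and $L_D$, precisely as claimed.
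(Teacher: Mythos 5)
Your proposal is correct and follows essentially the same route as the paper: the same add-and-subtract splitting of $\Delta\bbeta$ and $\Delta\bbetat$ (merely with the roles of primed and unprimed arguments swapped, which is immaterial since both are bounded by the same constants), the same use of Lemma \ref{lem:ineq} with the trunk bound, and the same derivation of the uniform bound $\cons_D$ on $\|\D(\Th)\|_2$ from the Lipschitz property and the anchor $\Th_0$. Your expression $\cons_D = \|\D(\Th_0)\|_2 + 2L_D\sqrt{k}\,\cons_\mathcal{T}$ is in fact the correct form; the paper's $\cons_\mathcal{T}^2$ there appears to be a typographical slip.
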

\begin{proof}
For any $(f, \theta,\eta),(f^\prime, \theta^\prime,\eta^\prime) \in \dbX$, we have using \eqref{eqn:net}and \eqref{eqn:soph_pert}
\begin{eqnarray}\label{eqn:net_stable1}
  \Erh_\text{stab}[(f,\theta,\eta),(f^\prime,\theta^\prime,\eta^\prime)] &=& \|\Soph(\Fh, \Th,\Nh) - \Soph(\Fh^\prime, \Th^\prime,\Nh^\prime)\|_{L^2(\Omega)} \notag \\
  &\leq& \underbrace{\|\big(\bbeta(\Fh,\Th) - \bbeta(\Fh^\prime,\Th^\prime)\big)^\top \btau\|_{L^2(\Omega)}}_{\Er_{\bbeta}} \notag\\
  &&+ \underbrace{\|\big(\bbetat(\Nh,\Th) - \bbetat(\Nh^\prime,\Th^\prime)\big)^\top \btau\|_{L^2(\Omega)}}_{\Er_{\bbetat}}.
\end{eqnarray}
We can use the Lipschitz assumption on $\D$ and \eqref{eqn:l2bnd} to prove that $\D$ is bounded on $\widehat{\dbT}$. To see this, we consider $\Th_0 \in \widehat{\dbT}$ defined in the theorem statement to get
\begin{align}\label{eqn:D_bnd}
   \|\D(\Th)\|_2 - \|\D(\Th_0)\|_2 &\leq  \|\D(\Th) - \D(\Th_0)\|_2
    \leq L_D \|\Th - \Th_0 \|_2 \quad \forall \  \Th \in \widehat{\dbT}, \notag \\
    \implies \|\D(\Th)\|_2  &\leq \underbrace{\|\D(\Th_0)\|_2 + 2 L_D \sqrt{k} \cons_\mathcal{T}^2 }_{ \cons_D} \quad \forall \  \Th \in \widehat{\dbT}.
\end{align}

Next, using \eqref{eqn:l2bnd}, \eqref{eqn:D_lip}, \eqref{eqn:D_bnd}, \eqref{eqn:net_bnds} and Lemma \ref{lem:ineq}, we get the estimate
\begin{eqnarray}\label{eqn:err_beta}
 \Er_{\bbeta} &=& \|\big(\bbeta(\Fh,\Th) - \bbeta(\Fh,\Th^\prime) + \bbeta(\Fh,\Th^\prime) - \bbeta(\Fh^\prime,\Th^\prime)\big)^\top \btau\|_{L^2(\Omega)} \notag \\
&=&\| \Big( \D(\Th) - \D(\Th^\prime) \big)\A \Fh + \D(\Th^\prime) \A \big(\Fh - \Fh^\prime \big)  \Big)^\top \btau \|_{L^2(\Omega)} \notag \\
&\leq&   \|\A\|_2\|\btau\|_{L^2(\Omega)} \left(L_D\| \Th - \Th^\prime \|_2 \|\Fh\|_2+ \|\D(\Th^\prime)\|_2 \|\Fh - \Fh^\prime \|_2\right) \notag \\
&\leq&   \|\A\|_2\|\btau\|_{L^2(\Omega)} \left(L_D \sqrt{k} \cons_\dbF \| \Th - \Th^\prime \|_2 + \cons_D \|\Fh - \Fh^\prime \|_2\right) \notag \\
&\leq&\wh{\cons}_1 \left( \| \Th - \Th^\prime \|_2 + \|\Fh - \Fh^\prime \|_2\right),
\end{eqnarray}
where
\[
\wh{\cons}_1 = \cons_\A \cons_\tau  \max \{L_D \sqrt{k} \cons_\dbF ,\cons_D\}.
\]
Similarly
\begin{eqnarray}\label{eqn:err_betat}
 \Er_{\bbetat} &=& \|\big(\bbetat(\Nh,\Th) - \bbetat(\Nh,\Th^\prime) + \bbetat(\Nh,\Th^\prime) - \bbetat(\Nh^\prime,\Th^\prime)\big)^\top \btau\|_{L^2(\Omega)} \notag \\
&=&\| \Big( \D(\Th) - \D(\Th^\prime) \big)\At \Nh + \D(\Th^\prime) \At \big(\Nh - \Nh^\prime \big)  \Big)^\top \btau \|_{L^2(\Omega)} \notag \\
&\leq&   \|\At\|_2\|\btau\|_{L^2(\Omega)} \left(L_D\| \Th - \Th^\prime \|_2 \|\Nh\|_2+ \|\D(\Th^\prime)\|_2 \|\Nh - \Nh^\prime \|_2\right) \notag \\
&\leq&   \|\At\|_2\|\btau\|_{L^2(\Omega)} \left(L_D \sqrt{k^\prime} C_\dbN\| \Th - \Th^\prime \|_2 + \cons_D \|\Nh - \Nh^\prime \|_2\right) \notag \\
&\leq&\wh{\cons}_2 \left( \| \Th - \Th^\prime \|_2 + \|\Nh - \Nh^\prime \|_2\right),
\end{eqnarray}
where
\[
\wh{\cons}_2 = \cons_\At \cons_\tau \max \{L_D \sqrt{k^\prime} \cons_\dbN ,\cons_D\}.
\]
Combining \eqref{eqn:net_stable1}, \eqref{eqn:err_beta} and \eqref{eqn:err_betat}, and setting $\wh{\cons}_{stab} = 2\max\{\wh{\cons}_1,\wh{\cons}_2\}$ proves the result. 
\end{proof}

\begin{remark}
In \cite{bonito2013}, it was shown for the homogeneous Dirichlet problem that if $\nabla u \in L^r(\Omega)$ for some $r \geq 2$, then the solution operator (measured in $H^1_0(\Omega)$) is stable with respect to perturbations in $\theta$ measured in $L^s$ for $s = 2r/(r-2)$. An analogue of such a result for the general elliptic mixed-boundary value problem considered in the present manuscript might change the norms for measuring $\theta$ in \eqref{eqn:stable_sop}, as well as the discrete norms in \eqref{eqn:D_lip}. This norm consistency might be useful in ensuring that the Lipschitz constant $L_D$ is stable to variations in the latent dimension $p$. These concepts will be explored in future work.
\end{remark}

\subsection{Covering estimates and quadrature errors}\label{sec:covering}

In addition to the data being bounded, we also require the training set to sufficiently cover the entire data space. As a consequence of compactness of $\dbX$ in Assumption \ref{ass:data_bnd}, given any $\epsilon_s >0$, we can find a positive integer $J :=J(\epsilon_s)$ such that $\dbX$ possesses a $\epsilon_s$-net with $J$ elements. In particular, there exists a set $W:=\{(f_j,\theta_j,\eta_j)\}_{j=1}^{J} \subset \dbX$ such that for any $(f,\theta,\eta)\in\dbX$
\begin{equation}\label{eqn:covering_L2}
    \|f-f_j\|_{L^2(\Omega)} < \epsilon_s, \quad \|\theta - \theta_j\|_{L^2(\Omega)} < \epsilon_s, \quad \|\eta - \eta_j\|_{L^2(\Gamma_\eta)} < \epsilon_s \quad \text{for some } 1 \leq j \leq J.
\end{equation}
We make the following assumption about the training dataset.
\begin{assumption}\label{ass:covering}
Assuming $\dbX$ to be compact and given $\epsilon_s >0$, let $W \subset \dbX$ be with $J$ elements such that \eqref{eqn:covering_L2} holds. We assume that the training set $\mathbb{S}$ is generated using such an $\epsilon_s$-net $W$.
\end{assumption}

Next, we need suitable conditions on the sensor nodes to allow us to bound the norm of the input vectors of the \net with the corresponding $L^2$ norms.

\begin{assumption}\label{ass:sensor_quad}
The sensor nodes $\{\xh_i\}_{i=1}^k$ and $\{\xh^b_i\}_{i=1}^{k^\prime}$ are chosen to be suitable quadrature nodes to approximate the square integrals of $g:\Omega \rightarrow \Ro$ and $g_b:\Gamma_\eta \rightarrow \Ro$ as
\begin{equation}\label{eqn:sensor_quad}
    \left | \sum_{i=1}^k \wh{w}_i g^2(\xh_i) -  \int_\Omega g^2(\x) \ud \x \right| \leq  \frac{\wh{\cons}_Q(g)}{k^{\alpha}}, \quad \left | \sum_{i=1}^{k^\prime} \wh{w}^b_i g_b^2(\xh^b_i) -  \int_{\Gamma_\eta} g_b^2(\x) \ud \x \right| \leq  \frac{\wh{\cons}_{b,Q}(g_b)} {{(k^\prime)}^{\alpha^\prime}},
\end{equation}
where $\{\wh{w}_i\}_{i=1}^k$ and $\{\wh{w}^b_i\}_{i=1}^{k^\prime}$ are the quadrature weights. Further, the weights are assumed to be positive.
\end{assumption}

As a consequence of Assumption \ref{ass:sensor_quad} the following estimates hold:
\begin{lemma}\label{lem:L2l2}
Let $g:\Omega \rightarrow \Ro$, $g_b:\Gamma_\eta \rightarrow \Ro$, and the sensors be chosen according to Assumption \ref{ass:sensor_quad}. Define the vector $\wh{\bm{G}} = (g(\xh_1),\cdots g(\xh_k))^\top \in \Ro^k$, $\wh{\bm{G}_b} = (g_b(\xh^b_1),\cdots g_b(\xh^b_{k^\prime}))^\top \in \Ro^{k^\prime}$, and the diagonal matrices $\wh{\W}_{\frac{1}{2}}  = \text{diag}(\sqrt{\wh{w}_1},\cdots \sqrt{\wh{w}_k}) \in \Ro^{k \times k}$, $\wh{\W}^b_{\frac{1}{2}}  = \text{diag}(\sqrt{\wh{w}^b_1},\cdots \sqrt{\wh{w}^b_{k^\prime}}) \in \Ro^{k^\prime \times k^\prime}$. Then
\begin{equation}
    \|\wh{\bm{G}}\|_2 \leq \|\wh{\W}^{-1}_{\frac{1}{2}}\|_2\left( \|g\|_{L^2(\Omega)}+ \frac{\sqrt{\wh{\cons}_Q(g) }}{ k^{\alpha/2}} \right), \quad \|\wh{\bm{G}}_b\|_2 \leq \|(\wh{\W}^{b}_{\frac{1}{2}})^{-1}\|_2\left( \|g_b\|_{L^2(\Gamma_\eta)}+ \frac{\sqrt{\wh{\cons}_{b,Q}(g_b) }}{ (k^\prime)^{\alpha^\prime/2}} \right),
\end{equation}
where $\wh{\cons}_Q, \wh{\cons}_{b,Q}, \alpha, \alpha^\prime$ are as defined in Assumption \ref{ass:sensor_quad}.
\end{lemma}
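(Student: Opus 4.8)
The plan is to reduce both inequalities to a weighted $\ell^2$ norm that the quadrature rule in Assumption~\ref{ass:sensor_quad} directly controls, and then to remove the weights by means of the spectral norm of $\wh{\W}^{-1}_{\frac{1}{2}}$. The key observation is that, because $\wh{\W}_{\frac{1}{2}}$ is diagonal with (real, positive) entries $\sqrt{\wh{w}_i}$, the weighted norm satisfies $\|\wh{\W}_{\frac{1}{2}} \wh{\bm{G}}\|_2^2 = \sum_{i=1}^k \wh{w}_i g^2(\xh_i)$, which is precisely the quadrature sum appearing in \eqref{eqn:sensor_quad}.

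First I would apply the upper half of the quadrature bound in \eqref{eqn:sensor_quad} to obtain
\[
\|\wh{\W}_{\frac{1}{2}} \wh{\bm{G}}\|_2^2 = \sum_{i=1}^k \wh{w}_i g^2(\xh_i) \leq \int_\Omega g^2(\x) \ud \x + \frac{\wh{\cons}_Q(g)}{k^\alpha} = \|g\|_{L^2(\Omega)}^2 + \frac{\wh{\cons}_Q(g)}{k^\alpha}.
\]
Taking square roots and invoking the elementary inequality $\sqrt{a+b}\leq\sqrt{a}+\sqrt{b}$ for $a,b\geq 0$ (the same one used for Lemma~\ref{lem:welltrained}) gives $\|\wh{\W}_{\frac{1}{2}} \wh{\bm{G}}\|_2 \leq \|g\|_{L^2(\Omega)} + \sqrt{\wh{\cons}_Q(g)}/k^{\alpha/2}$. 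I would then write $\wh{\bm{G}} = \wh{\W}^{-1}_{\frac{1}{2}}(\wh{\W}_{\frac{1}{2}} \wh{\bm{G}})$ and use submultiplicativity of the spectral norm, $\|\wh{\bm{G}}\|_2 \leq \|\wh{\W}^{-1}_{\frac{1}{2}}\|_2\,\|\wh{\W}_{\frac{1}{2}} \wh{\bm{G}}\|_2$, and combine the two bounds to reach the stated estimate for $\wh{\bm{G}}$. The boundary estimate for $\wh{\bm{G}}_b$ follows by an identical argument, replacing $\Omega, k, \alpha, \wh{\cons}_Q$ by $\Gamma_\eta, k^\prime, \alpha^\prime, \wh{\cons}_{b,Q}$ and using $(\wh{\W}^b_{\frac{1}{2}})^{-1}$.

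There is no substantive obstacle; the proof is a short chain of elementary estimates. The only point requiring care is that the positivity of the quadrature weights, explicitly assumed at the end of Assumption~\ref{ass:sensor_quad}, is exactly what guarantees that $\wh{\W}_{\frac{1}{2}}$ is a well-defined real diagonal matrix that is invertible, so that $\sqrt{\wh{w}_i}$ and $\|\wh{\W}^{-1}_{\frac{1}{2}}\|_2 = 1/\min_i \sqrt{\wh{w}_i}$ both make sense.
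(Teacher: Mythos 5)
Your proof is correct and follows essentially the same route as the paper's: identify the quadrature sum as $\|\wh{\W}_{\frac{1}{2}}\wh{\bm{G}}\|_2^2$, apply the quadrature bound from Assumption \ref{ass:sensor_quad}, peel off the weights via $\|\wh{\W}^{-1}_{\frac{1}{2}}\|_2$, and finish with $\sqrt{a+b}\leq\sqrt{a}+\sqrt{b}$. The only cosmetic difference is the order in which the square root and the spectral-norm step are taken, which is immaterial.
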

\begin{proof}
Let us first focus on the square integral of $g$. Note that
\begin{eqnarray*}
    \sum_{i=1}^k \wh{w}_i g^2(\xh_i) = \sum_{i=1}^k \left(\sqrt{\wh{w}_i} g(\xh_i) \right)^2 = \|\wh{\W}_{\frac{1}{2}} \wh{\bm{G}}\|_2^2.
\end{eqnarray*}
Then, using \eqref{eqn:sensor_quad} we have
\begin{eqnarray*}
    \|\wh{\bm{G}}\|_2^2 \leq \| \wh{\W}^{-1}_{\frac{1}{2}}\|^2_2\|\wh{\W}_{\frac{1}{2}} \wh{\bm{G}}\|_2^2 \leq \| \wh{\W}^{-1}_{\frac{1}{2}}\|^2_2 \left(
    \|g\|^2_{L^2(\Omega)}  + \frac{\wh{\cons}_Q(g)}{ k^{\alpha}} \right),
\end{eqnarray*}
which implies
\[
\|\wh{\bm{G}}\|_2 \leq \|\wh{\W}^{-1}_{\frac{1}{2}}\|_2\left(\|g\|^2_{L^2(\Omega)} + \frac{\wh{\cons}_Q(g)}{ k^{\alpha}}\right)^{1/2} \leq \|\wh{\W}^{-1}_{\frac{1}{2}}\|_2\left( \|g\|_{L^2(\Omega)}+ \frac{\sqrt{\wh{\cons}_Q(g) }}{ k^{\alpha/2}} \right).
\]
The estimate for $\|\wh{\bm{G}}_b\|_2 $ can be obtained in a similar manner.
\end{proof}

\subsection{Generalization error}\label{sec:gen_err}
We can finally combine all the assumptions and results from the previous sections to bound the generalization error \eqref{eqn:err_true_net}.

\begin{theorem}\label{thm:gen_err}
Consider a \net trained on a dataset $\mathbb{S}$ to approximate $\Sop$. Let us assume:
\begin{enumerate}
    \item Assumption \ref{ass:stable_sop} holds, which ensures that $\Sop$ is stable and \eqref{eqn:stable_sop} holds.
    \item Assumption \ref{ass:num_approx} holds, which expresses how well $\Sop$ is approximated by $\Sop^h$.
    \item Assumption \ref{ass:covering} holds, which ensures that given $\epsilon_s > 0$, \eqref{eqn:covering_L2} holds for the set $W:=\{(f_j,\eta_j,\theta_j)\}_{j=1}^{J}$ used to generate the training set $\mathbb{S}$, where $J=J(\epsilon_s)$.
    \item $\Soph$ is $\epsilon_t$-trained in the sense of Definition \ref{def:well_trained}.
    \item The assumptions in the statement of Theorem \ref{thm:net_stability} hold, which ensures that $\Soph$ is stable.
    \item The sensor nodes are chosen in accordance to Assumption \ref{ass:sensor_quad}. Furthermore, the constants 
    \begin{subequations}\label{eqn:sensor_quad_constants}
\begin{align}
    \wh{\cons}_{Q,\dbF} = \sup_{f,f^\prime \in \dbF} \wh{\cons}_Q(f-f^\prime), \quad \wh{\cons}_{Q,\dbT} = \sup_{\theta,\theta^\prime \in \dbT} \wh{\cons}_Q(\theta-\theta^\prime), \quad \wh{\cons}_{b,Q,\dbN} = \sup_{\eta,\eta^\prime \in \dbN} \wh{\cons}_{b,Q}(\eta-\eta^\prime),
\end{align}
\end{subequations}
    are finite, where $\wh{\cons}_Q,\wh{\cons}_{b,Q}$ are as defined in \eqref{eqn:sensor_quad}.
\end{enumerate}
Then we have the following estimates for the generalizaition error
\begin{equation}\label{eqn:gen_err}
    \Er(f,\theta,\eta) \leq  \cons_{h,\mathbb{S}}\epsilon_h + \ol{\cons}_S \epsilon_s + \sqrt{\epsilon_t} + \ol{\cons}_Q \left( \frac{1}{k^{\alpha/2}} + \frac{1}{(k^\prime)^{\alpha^\prime/2}} + \frac{1}{L^{\gamma/2}}\right),
\end{equation}
where $L,\gamma$ are as defined in Lemma \ref{lem:welltrained}, $k,k^\prime,\alpha,\alpha^\prime$ are as defined in \eqref{eqn:sensor_quad} and  $\cons_{h,\mathbb{S}}$ is defined in \eqref{eqn:num_err_constants}. Further, $\ol{\cons}_S, \ol{\cons}_Q$ depend on the stability constants in \eqref{eqn:stable_sop} and \eqref{eqn:net_stable}, the quadrature weights of \eqref{eqn:sensor_quad}, and the constants in \eqref{eqn:sensor_quad_constants}. 
\end{theorem}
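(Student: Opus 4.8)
The plan is to start from the triangle-inequality decomposition \eqref{eqn:gen_err_breakup}, which already splits $\Er(f,\theta,\eta)$ into four pieces for an arbitrary training index $j$: the stability term $\Er_\text{stab}[(f,\theta,\eta),(f_j,\theta_j,\eta_j)]$ for the exact operator, the numerical error $\Er_h(f_j,\theta_j,\eta_j)$, the training error $\Erh(f_j,\theta_j,\eta_j)$, and the network stability term $\Erh_\text{stab}[(f_j,\theta_j,\eta_j),(f,\theta,\eta)]$. Since this bound holds for every $j$, I would fix $(f,\theta,\eta)\in\dbX$ and invoke Assumption \ref{ass:covering} to choose the particular index $j$ for which the $\epsilon_s$-net estimate \eqref{eqn:covering_L2} holds, so that $\|f-f_j\|_{L^2(\Omega)}$, $\|\theta-\theta_j\|_{L^2(\Omega)}$, and $\|\eta-\eta_j\|_{L^2(\Gamma_\eta)}$ are all below $\epsilon_s$. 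The rest of the argument bounds each of the four pieces with this choice of $j$.

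Two of the four terms follow immediately. The numerical-error term is controlled by Assumption \ref{ass:num_approx} and the definition \eqref{eqn:num_err_constants}, giving $\Er_h(f_j,\theta_j,\eta_j) \leq \cons_h(f_j,\theta_j,\eta_j)\epsilon_h \leq \cons_{h,\mathbb{S}}\epsilon_h$. The training-error term is exactly what Lemma \ref{lem:welltrained} bounds, yielding $\Erh(f_j,\theta_j,\eta_j) \leq \sqrt{\epsilon_t} + \sqrt{\cons_{Q,\mathbb{S}}}\,L^{-\gamma/2}$; the first summand supplies the $\sqrt{\epsilon_t}$ contribution and the second feeds the $L^{-\gamma/2}$ part of the final quadrature term.

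For the two stability terms I would apply Assumption \ref{ass:stable_sop} and Theorem \ref{thm:net_stability} respectively. The exact-operator term is the easy one: \eqref{eqn:stable_sop} together with the covering bounds \eqref{eqn:covering_L2} gives $\Er_\text{stab} \leq \cons_\text{stab}(\|f-f_j\|_{L^2(\Omega)} + \|\theta-\theta_j\|_{L^2(\Omega)} + \|\eta-\eta_j\|_{L^2(\Gamma_\eta)}) < 3\cons_\text{stab}\epsilon_s$, contributing directly to $\ol{\cons}_S\epsilon_s$. The network stability term \eqref{eqn:net_stable} is where the real work lies and is the main obstacle: it is phrased in the \emph{discrete} sensor norms $\|\Fh-\Fh_j\|_2$, $\|\Th-\Th_j\|_2$, $\|\Nh-\Nh_j\|_2$, whereas the covering estimate only controls continuous $L^2$ norms. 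To bridge this gap I would observe that $\Fh-\Fh_j$ is exactly the vector of sensor samples of $f-f_j$, and likewise for $\theta$ and $\eta$, so Lemma \ref{lem:L2l2} applies with $g=f-f_j$, $g=\theta-\theta_j$, and $g_b=\eta-\eta_j$. This converts each discrete norm into an $L^2$ norm plus a quadrature remainder; using \eqref{eqn:covering_L2} for the $L^2$ parts and the uniform constants \eqref{eqn:sensor_quad_constants} for the remainders gives, for example, $\|\Fh-\Fh_j\|_2 \leq \|\wh{\W}^{-1}_{\frac{1}{2}}\|_2(\epsilon_s + \sqrt{\wh{\cons}_{Q,\dbF}}\,k^{-\alpha/2})$, with analogous bounds for the $\theta$-term and, using the boundary weight matrix and $(k^\prime)^{-\alpha^\prime/2}$, the $\eta$-term. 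Substituting these into \eqref{eqn:net_stable} produces an $\epsilon_s$ piece and the $k^{-\alpha/2}$, $(k^\prime)^{-\alpha^\prime/2}$ quadrature pieces.

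Finally I would collect terms: the $\epsilon_h$ piece gives $\cons_{h,\mathbb{S}}\epsilon_h$; all $\epsilon_s$ contributions (from both stability terms) combine into $\ol{\cons}_S\epsilon_s$; the $\sqrt{\epsilon_t}$ piece stands alone; and the three quadrature remainders combine into $\ol{\cons}_Q(k^{-\alpha/2} + (k^\prime)^{-\alpha^\prime/2} + L^{-\gamma/2})$, where $\ol{\cons}_S$ and $\ol{\cons}_Q$ absorb $\cons_\text{stab}$, $\widehat{\cons}_\text{stab}$, the weight-matrix norms $\|\wh{\W}^{-1}_{\frac{1}{2}}\|_2$, and the supremum quadrature constants \eqref{eqn:sensor_quad_constants}. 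The one subtlety to watch is norm consistency: the stability estimates must be stated in the same $L^2(\Omega)$ and $L^2(\Gamma_\eta)$ norms that the covering net controls, but under the stated assumptions this matches, and the estimate \eqref{eqn:gen_err} follows.
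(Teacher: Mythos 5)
Your proposal is correct and follows essentially the same route as the paper's own proof: the same four-term decomposition \eqref{eqn:gen_err_breakup} with the covering index $j$, the same use of Assumption \ref{ass:num_approx}, Lemma \ref{lem:welltrained}, Assumption \ref{ass:stable_sop}, and Theorem \ref{thm:net_stability}, and the same key step of converting the discrete sensor norms to $L^2$ norms via Lemma \ref{lem:L2l2} together with the uniform constants \eqref{eqn:sensor_quad_constants}. The final collection of terms into $\ol{\cons}_S$ and $\ol{\cons}_Q$ matches the paper's as well.
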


\begin{proof}
Given a $(f, \eta,\theta) \in \dbX$ and $\epsilon_s > 0$, the compactness of $\dbX$ and Assumption \ref{ass:covering} ensures we can find an index $j := j(f,\theta,\eta) \leq J$ of the $\mathbb{S}$ such that \eqref{eqn:covering_L2} holds. Furthermore, by assuming that the sensor nodes are chosen according to Lemma \ref{lem:L2l2} and assuming the finiteness of the constants defined in \eqref{eqn:sensor_quad_constants}, we can bound
\begin{subequations}\label{eqn:L2l2diff}
\begin{align}
\|\Fh - \Fh_j\|_2 &\leq \|\wh{\W}_{\frac{1}{2}}^{-1}\|_2 \left(\|f - f_j\|_{L^2(\Omega)} + \frac{\sqrt{\wh{\cons}_Q(f - f_j) }}{ k^{\alpha/2}} \right) <  \|\wh{\W}_{\frac{1}{2}}^{-1}\|_2 \left(\epsilon_s + \frac{\sqrt{\wh{\cons}_{Q,\dbF}}}{ k^{\alpha/2}} \right),\\
\|\Th - \Th_j\|_2 &\leq \|\wh{\W}_{\frac{1}{2}}^{-1}\|_2 \left(\|\theta - \theta_j\|_{L^2(\Omega)} + \frac{\sqrt{\wh{\cons}_Q(\theta - \theta_j) }}{ k^{\alpha/2}} \right) <  \|\wh{\W}_{\frac{1}{2}}^{-1}\|_2 \left(\epsilon_s + \frac{\sqrt{\wh{\cons}_{Q,\dbT}}}{ k^{\alpha/2}} \right),\\
\|\Nh - \Nh_j\|_2 &\leq \|(\wh{\W}^b_{\frac{1}{2}})^{-1}\|_2 \left(\|\eta - \eta_j\|_{L^2(\Gamma_\eta)} + \frac{\sqrt{\wh{\cons}_{b,Q}(\eta - \eta_j) }}{ k^{\alpha/2}} \right) <  \|(\wh{\W}^b_{\frac{1}{2}})^{-1}\|_2 \left(\epsilon_s + \frac{\sqrt{\wh{\cons}_{b,Q,\dbN}}}{ (k^\prime)^{\alpha^\prime/2}} \right).
\end{align}
\end{subequations}
Using \eqref{eqn:gen_err_breakup}, \eqref{eqn:stable_sop}, \eqref{eqn:num_approx}, \eqref{eqn:lossapprox}, \eqref{eqn:net_stable}, \eqref{eqn:num_err_constants}, \eqref{eqn:covering_L2} and \eqref{eqn:L2l2diff}, we obtain the estimate
\begin{eqnarray*}
 \Er(f,\theta,\eta) &<& \cons_\text{stab} \left( \|f - f_j \|_{L^2(\Omega)} + \|\theta - \theta_j \|_{L^2(\Omega)} + \|\eta - \eta_j \|_{L^2(\Gamma_\eta)}\right) + \cons_h(f_j,\theta_j,\eta_j)\epsilon_h \notag \\
 &&+ \sqrt{\epsilon_t} + \frac{\sqrt{\cons_{Q,\mathbb{S}}}}{L^{\gamma/2}}  + \widehat{\cons}_\text{stab} \left( \|\Fh - \Fh^\prime \|_2 + \|\Th - \Th^\prime \|_2 + \|\Nh - \Nh^\prime \|_2\right)\notag \\
 &\leq& 3\cons_\text{stab} \epsilon_s + \cons_{h,\mathbb{S}}\epsilon_h + \sqrt{\epsilon_t}  +  \frac{\sqrt{\cons_{Q,\mathbb{S}}}}{L^{\gamma/2}} + \widehat{\cons}_\text{stab} \left( 2\|\wh{\W}_{\frac{1}{2}}^{-1}\|_2 + \|(\wh{\W}^b_{\frac{1}{2}})^{-1}\|_2 \right) \epsilon_s \notag \\
 && +  \wh{\cons}_\text{stab} \left( \|\wh{\W}_{\frac{1}{2}}^{-1}\|_2\frac{\left(\sqrt{\wh{\cons}_{Q,\dbF}} + \sqrt{\wh{\cons}_{Q,\dbT}}\right)}{ k^{\alpha/2}} + \|(\wh{\W}^b_{\frac{1}{2}})^{-1}\|_2\frac{\sqrt{\wh{\cons}_{b,Q,\dbN} }}{ (k^\prime)^{\alpha^\prime/2}}\right) \\
 &\leq& \cons_{h,\mathbb{S}}\epsilon_h + \ol{\cons}_S \epsilon_s + \sqrt{\epsilon_t} + \ol{\cons}_Q \left( \frac{1}{k^{\alpha/2}} + \frac{1}{(k^\prime)^{\alpha^\prime/2}} + \frac{1}{L^{\gamma/2}}\right)
\end{eqnarray*}
where
\begin{align*}
    \ol{\cons}_S &= 3 \cons_\text{stab} + \widehat{\cons}_\text{stab}\left( 2\|\wh{\W}_{\frac{1}{2}}^{-1}\|_2 + \|(\wh{\W}^b_{\frac{1}{2}})^{-1}\|_2 \right),\\
    \ol{\cons}_Q &= \max\{\wh{\cons}_\text{stab}\|\wh{\W}_{\frac{1}{2}}^{-1}\|_2\left(\sqrt{\wh{\cons}_{Q,\dbF}} + \sqrt{\wh{\cons}_{Q,\dbT}}\right) \ , \ \wh{\cons}_\text{stab} \|(\wh{\W}^b_{\frac{1}{2}})^{-1}\|_2 \sqrt{\wh{\cons}_{b,Q,\dbN} } \ , \ \sqrt{\cons_{Q,\mathbb{S}}} \}.
\end{align*}

\end{proof}

As a consequence of Theorem \ref{thm:gen_err}, we also obtain an estimate for the error \eqref{eqn:err_num_net} for any $(f,\theta,\eta) \in \dbX$.

\begin{corollary}\label{cor:gen_err_num}
Let the assumptions of Theorem \ref{thm:gen_err} hold. Then
\begin{equation}\label{eqn:gen_err_num}
\wh{\Er}(f,\theta,\eta) \leq  2\cons_{h,\dbX}\epsilon_h + \ol{\cons}_S \epsilon_s + \sqrt{\epsilon_t} + \ol{\cons}_Q \left( \frac{1}{k^{\alpha/2}} + \frac{1}{(k^\prime)^{\alpha^\prime/2}} + \frac{1}{L^{\gamma/2}}\right),
\end{equation}
where $\cons_{h,\dbX}$ is defined in \eqref{eqn:num_err_constants}.
\end{corollary}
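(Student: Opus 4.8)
The plan is to relate $\wh{\Er} = \Erh$, the discrepancy between the numerical solution operator $\Sop^h\circ\mathcal{P}$ and the \net operator $\Soph\circ\Ph$, to the two errors already controlled: the generalization error $\Er$ of Theorem \ref{thm:gen_err} and the numerical approximation error $\Er_h$ of Assumption \ref{ass:num_approx}. The key observation is that $\Soph\circ\Ph$ has already been compared against the true operator $\Sop$ in Theorem \ref{thm:gen_err}, so the natural move is to insert $\Sop$ as an intermediary and split with the triangle inequality. Concretely, for any $(f,\theta,\eta)\in\dbX$,
\begin{equation*}
\Erh(f,\theta,\eta) = \|\Sop^h\circ\mathcal{P}(f,\theta,\eta) - \Sop(f,\theta,\eta) + \Sop(f,\theta,\eta) - \Soph\circ\Ph(f,\theta,\eta)\|_{L^2(\Omega)} \leq \Er_h(f,\theta,\eta) + \Er(f,\theta,\eta),
\end{equation*}
where I have used the symmetry of the $L^2(\Omega)$ norm to identify $\|\Sop^h\circ\mathcal{P} - \Sop\|_{L^2(\Omega)}$ with $\Er_h(f,\theta,\eta)$ from \eqref{eqn:err_true_num}.

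For the generalization term I would invoke Theorem \ref{thm:gen_err} verbatim, bounding $\Er(f,\theta,\eta)$ by $\cons_{h,\mathbb{S}}\epsilon_h + \ol{\cons}_S \epsilon_s + \sqrt{\epsilon_t} + \ol{\cons}_Q(k^{-\alpha/2} + (k^\prime)^{-\alpha^\prime/2} + L^{-\gamma/2})$. For the numerical term I would apply Assumption \ref{ass:num_approx} together with the definition $\cons_{h,\dbX} = \sup_{\dbX}\cons_h$ from \eqref{eqn:num_err_constants}, which gives $\Er_h(f,\theta,\eta) < \cons_h(f,\theta,\eta)\epsilon_h \leq \cons_{h,\dbX}\epsilon_h$. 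The point is that this second bound holds for \emph{every} $(f,\theta,\eta)\in\dbX$, not merely on the training samples, which is why the supremal constant $\cons_{h,\dbX}$ (rather than $\cons_{h,\mathbb{S}}$) enters here.

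Adding the two bounds, the only bookkeeping subtlety is that the generalization estimate carries the sharper, training-set constant $\cons_{h,\mathbb{S}}$ on its $\epsilon_h$ contribution, whereas the numerical term carries $\cons_{h,\dbX}$. Using the inequality $\cons_{h,\mathbb{S}} \leq \cons_{h,\dbX}$ already recorded in \eqref{eqn:num_err_constants}, I would merge both $\epsilon_h$ contributions into a single term $2\cons_{h,\dbX}\epsilon_h$, leaving the remaining constants $\ol{\cons}_S, \ol{\cons}_Q$ and all convergence rates exactly as in Theorem \ref{thm:gen_err}. This yields the claimed estimate \eqref{eqn:gen_err_num}. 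I expect no genuine obstacle: this is a direct corollary whose only content beyond Theorem \ref{thm:gen_err} is the additional $\cons_{h,\dbX}\epsilon_h$ incurred by measuring against $\Sop^h\circ\mathcal{P}$ instead of $\Sop$, and the constant-matching that lets one express both $\epsilon_h$ terms through $\cons_{h,\dbX}$.
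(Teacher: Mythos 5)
Your proposal is correct and follows exactly the same route as the paper's own proof: insert $\Sop$ via the triangle inequality, bound the numerical term by $\cons_{h,\dbX}\epsilon_h$ using Assumption \ref{ass:num_approx}, invoke Theorem \ref{thm:gen_err} for the generalization term, and merge the two $\epsilon_h$ contributions via $\cons_{h,\mathbb{S}} \leq \cons_{h,\dbX}$. No issues.
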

\begin{proof}
Noting that $C_{h,\mathbb{S}} \leq \cons_{h,\dbX}$, we use \eqref{eqn:num_approx} and \eqref{eqn:gen_err} to get
\begin{eqnarray*}
 \wh{\Er}(f,\theta,\eta) &=& \|\Sop^h\circ\mathcal{P}(f,\theta,\eta) - \Soph\circ\Ph(f,\eta,\theta )\|_{L^2(\Omega)} \notag \\
 &\leq& \|\Sop^h\circ\mathcal{P}(f,\theta,\eta) - \Sop(f,\theta,\eta)\|_{L^2(\Omega)} + \|\Sop(f,\theta,\eta) - \Soph\circ\Ph(f,\eta,\theta )\|_{L^2(\Omega)} \notag \\
 &\leq&  C_h(f,\theta,\eta)\epsilon_h + \Er(f,\theta,\eta) \notag\\
 &\leq& \cons_{h,\dbX} \epsilon_h + \Er(f,\theta,\eta) \notag\\
 &\leq& 2\cons_{h,\dbX}\epsilon_h + \ol{\cons}_S \epsilon_s + \sqrt{\epsilon_t} + \ol{\cons}_Q \left( \frac{1}{k^{\alpha/2}} + \frac{1}{(k^\prime)^{\alpha^\prime/2}} + \frac{1}{L^{\gamma/2}}\right).
\end{eqnarray*}
\end{proof}

We make some remarks here:
\begin{itemize}
\setlength\itemsep{0pt}
\item The expression \eqref{eqn:gen_err} makes it clear that the \net when applied to a test data in $\mathcal{X}$ leads to four distinct error terms. The first corresponds to the error in approximating the true analytical (weak) solution of the PDE by the numerical solution to \eqref{eqn:d_weak}, which is used to generate the training data for the \net. The second term is governed by how well the training data covers the PDE data space. The third is the training error of the \net, which depends on the capacity/size of the network and the algorithms used to train it. Finally, the fourth term is the accumulated quadrature error corresponding to the input evaluated at the finite sensor nodes, and the loss term in \eqref{eqn:loss} approximating the true $L^2$ error. The constants $\ol{\cons}_S, \ol{\cons}_Q$ depend on the stability constants of the PDE and the \net.  
\item If we judiciously increase the number of training samples in $\mathbb{S}$, i.e., $J \uparrow$, we can reduce the radius of the cover, i.e., $\epsilon_s \downarrow$. In order to continue satisfying \eqref{eqn:err_training_samples}, we may need to increase the capacity (size) of the \net. Note that for a fixed \net latent dimension $p$, the architecture only allows changing the size of the non-linear branch and the trunk.
\item The quadrature errors can be reduced by either increasing the number of sensor and output nodes, or by using the nodes (and weights) corresponding to a higher-order quadrature.
\item We cannot in general hope for the \net error to be smaller than the error corresponding to the training samples in approximating the exact solution. This puts a practical lower bound on the monitored validation error achievable.
\item We have assumed that $f$ and $\theta$ are sampled at the same sensor nodes for simplicity of the discussion. However, it is possible to sample the functions at different sets of sensor nodes, which would lead to an error estimate similar to \eqref{eqn:gen_err} but now including separate quadrature error terms for $f$ and $\theta$.
% \item \dradd{}{(Possible extension using Lusin's theorem?)}
% \item \dradd{}{(Need also to compare with Sid Mishra's paper and Chen and Chen's paper)}

\end{itemize}

\subsection{Structural estimates}\label{sec:Kapprox}
We now demonstrate another advantage of the \net architecture, in that we can approximate additional structures arising in the discrete weak formulation \eqref{eqn:d_weak}. To derive the estimates in this section, we will work with the particular setup where
\begin{itemize}
    \item The number of output nodes of the \net equals the dimension of the space $\mathcal{V}^h$ defined in Section \ref{sec:fem}, i.e., $L=q$.
    \item We restrict the PDE-data to the projection space $\dbX^h$ defined in \eqref{eqn:data_projector}. Note that $\mathcal{P}$ restricted to $\dbX^h$ is the identity map. 
\end{itemize}

We define the following matrices associated with the sensor nodes $\{\xh_i\}_{i=1}^k$ and the output nodes $\{\x_l\}_{l=1}^q$
\begin{equation}\label{eqn:Vmats}
    \Vh \in \Ro^{k \times q}, \quad \wh{V}_{ij} = \phi_j(\xh_i), \quad \V \in \Ro^{q \times q}, \quad V_{ij} = \phi_j(\x_i), \quad \T \in \Ro^{q \times p}, \quad T_{ij} = \tau_j(\x_i),
\end{equation}
where $\{\phi_i\}_{i=1}^q$ are the basis functions of $\mathcal{V}^h$ while $\btau = \{\tau_i\}_{i=1}^p$ are the \net trunk basis functions. For $f \in \dbF^h \subset \dbF$, the vector of nodal values can be evaluated as by $\Fh = \wh{\V} \F$.

We now prove the following result that shows that we can approximate the operator $\K^{-1}(\theta)$ arising in \eqref{eqn:d_weak} using the trained \net components.

\begin{theorem}\label{thm:Kinv_approx}
Consider a \net $\Soph$ trained on a dataset $\mathbb{S}$ to approximate $\Sop$. Furthermore let:
\begin{enumerate}
\item The assumptions in the statement of Corollary \ref{cor:gen_err_num} hold true, which ensures \eqref{eqn:gen_err_num} is satisfied for all $(f,\theta,\eta) \in \mathcal{X}$, and in particular on $\dbX^h$. 
\item The output nodes be chosen such that the matrix $\V$ is invertible.
\item The quadrature weights $\{w_l\}_{l=1}^q$ defined in \eqref{eqn:quad_out} corresponding to the output nodes are positive. 
\item For the set  
\[
\mathcal{G} = \{(u,u^h) \in \dbV \times \dbV^h : u = \Sop(f,\theta,\eta), \ u^h = \Sop \circ \mathcal{P}(f,\theta,\eta), \quad (f,\theta,\eta) \in \dbX^h \},
\]
the constant
\[
C_{Q,h} = \sup_{(u^h,u) \in \mathcal{G}}C_Q(u^h-u),
\]
is finite, where $C_Q$ is the quadrature constant in \eqref{eqn:quad_out}.
\item $\{f \in L^2(\Omega) \cap C(\Omega) : \|\F\|_2 = 1 \} \subset \dbF^h$.
\end{enumerate}
Then we have the estimate
\begin{eqnarray}\label{eqn:Kinv_approx}
\|\K^{-1}(\theta^h) - \Q(\Th) \|_2 &\leq& \Bigg[2\cons_{h,\dbX}\epsilon_h + \ol{\cons}_S \epsilon_s + \sqrt{\epsilon_t} \notag \\
    && + \widetilde{C}_Q \left( \frac{1}{k^{\alpha/2}} + \frac{1}{(k^\prime)^{\alpha^\prime/2}} + \frac{1}{q^{\gamma/2}}\right) \Bigg]\|\W_{\frac{1}{2}}^{-1}\|_2\|\V^{-1}\|_2\|\M^{-1}\|_2,
\end{eqnarray}
where $\Q(\Th) = \V^{-1}\T \D(\Th) \A \wh{\V}\M^{-1} \in \Ro^{q\times q}$, and $\W_{\frac{1}{2}} = \text{diag}(\sqrt{w_1},\cdots \sqrt{w_q}) \in \Ro^{q \times q}$ with $\{w_l\}_{l=1}^q$ being the quadrature weights defined in \eqref{eqn:quad_out}.
\end{theorem}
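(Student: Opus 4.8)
The plan is to exploit the exact algebraic correspondence between the discrete solution operator \eqref{eqn:d_soln_op} and the \net output \eqref{eqn:net}, and then convert the $L^2$ generalization bound of Corollary \ref{cor:gen_err_num} into a bound on a matrix $2$-norm. I would specialize to source-only data, fixing the material field at the $\theta^h$ of interest and taking zero flux, so that the triples $(f,\theta^h,0)$ sweep out a family in $\dbX^h$ as $f$ ranges over $\{f:\|\F\|_2=1\}\subset\dbF^h$ (the richness hypothesis). For such data $\Bt=\mathbf{0}$ and the numerical solution has coefficients $\U=\K^{-1}(\theta^h)\M\F$, so its vector of nodal values at $\{\x_l\}_{l=1}^q$ is $\V\U=\V\K^{-1}(\theta^h)\M\F$, while the \net output has nodal values $\T\D(\Th)\A\Fh=\T\D(\Th)\A\wh{\V}\F$. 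Writing $\mathbf{g}$ for the nodal difference and using $\V\Q(\Th)\M=\T\D(\Th)\A\wh{\V}$ (the definition of $\Q$), I obtain the key identity
\begin{equation*}
\mathbf{g} = \V\big(\K^{-1}(\theta^h) - \Q(\Th)\big)\M\F .
\end{equation*}

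Next I would pass from the continuous $L^2$ error to this discrete nodal error. Applying the output quadrature \eqref{eqn:quad_out} to $g=\Sop^h(f,\theta^h,0)-\Soph(\Fh,\Th,\mathbf{0})$ and recalling $L=q$ gives $\|\W_{\frac{1}{2}}\mathbf{g}\|_2^2=\sum_{l=1}^q w_l g^2(\x_l)\leq\|g\|_{L^2(\Omega)}^2+\cons_Q(g)/q^{\gamma}$; since the output weights are positive the matrix $\W_{\frac{1}{2}}$ is invertible, so $\|\mathbf{g}\|_2\leq\|\W_{\frac{1}{2}}^{-1}\|_2\big(\|g\|_{L^2(\Omega)}+\sqrt{\cons_Q(g)}/q^{\gamma/2}\big)$. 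Here $\|g\|_{L^2(\Omega)}=\wh{\Er}(f,\theta^h,0)$ is controlled by Corollary \ref{cor:gen_err_num}, and the quadrature constant $\cons_Q(g)$ is controlled uniformly through the finiteness of $C_{Q,h}$; crucially both bounds are independent of the particular $f$.

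Finally I would upgrade this family of per-$f$ estimates into an operator-norm bound. From the identity and the invertibility of $\V$, $\big(\K^{-1}(\theta^h)-\Q(\Th)\big)\M\F=\V^{-1}\mathbf{g}$, hence $\|(\K^{-1}(\theta^h)-\Q(\Th))\M\F\|_2\leq\|\V^{-1}\|_2\|\mathbf{g}\|_2$. Because the richness hypothesis guarantees that $\F$ realizes every unit vector, taking the supremum over $\|\F\|_2=1$ turns the left-hand side into the matrix norm $\|(\K^{-1}(\theta^h)-\Q(\Th))\M\|_2$ and the right-hand side into a uniform constant. Factoring $\K^{-1}(\theta^h)-\Q(\Th)=\big[(\K^{-1}(\theta^h)-\Q(\Th))\M\big]\M^{-1}$ and using submultiplicativity then produces the factor $\|\M^{-1}\|_2$, assembling the chain $\|\W_{\frac{1}{2}}^{-1}\|_2\|\V^{-1}\|_2\|\M^{-1}\|_2$ exactly as in \eqref{eqn:Kinv_approx}; the bracketed error terms come from substituting the Corollary bound for $\|g\|_{L^2(\Omega)}$, with $\widetilde{C}_Q$ absorbing $\ol{\cons}_Q$ together with the new output-quadrature constant.

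The main obstacle I anticipate is the passage from pointwise-in-$f$ estimates to the operator norm: this is precisely where the hypothesis that the coefficient vectors $\F$ fill the unit sphere is indispensable, and one must verify that the bound on $\|\mathbf{g}\|_2$ is genuinely uniform over that sphere. This uniformity rests on the Corollary's constants being data-independent suprema and on the quadrature constant for the numerical-minus-network difference $g$ (rather than the numerical-minus-exact difference appearing in the definition of $C_{Q,h}$) being uniformly finite — the one bookkeeping point I would check carefully. The remaining manipulations, namely the nodal/$L^2$ conversion and the norm factorizations, are routine once these uniformities are secured.
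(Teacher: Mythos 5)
Your proposal follows essentially the same route as the paper's proof: the same specialization to data of the form $(f,\theta^h,0)$, the same nodal identity $\mathbf{g}=\V\big(\K^{-1}(\theta^h)-\Q(\Th)\big)\M\F$, the same conversion between the $L^2$ bound of Corollary \ref{cor:gen_err_num} and the weighted nodal error via the output quadrature, and the same passage to the operator norm by sweeping $\F$ over the unit sphere and factoring out $\M^{-1}$. The bookkeeping point you flag --- that $C_{Q,h}$ is defined through the numerical-minus-exact difference while the argument needs a uniform quadrature constant for the numerical-minus-network difference --- is present in the paper's own proof as well, so if anything you have read the argument more carefully than the source.
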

\begin{proof}
The vector of nodal values $\U^{nodal}$ and $\wh{\U}^{nodal}$ of the discrete weak solution satisfying \eqref{eqn:d_weak} and the \net solution, respectively, at the output nodes are given by
\begin{equation}\label{eqn:nodal_vals}
\U^{nodal} = \V \K^{-1}(\theta^h)\M \F , \qquad  \wh{\U}^{nodal} = \T \D(\Th)\A \Fh = \T \D(\Th)\A \Vh \F,   
\end{equation}
where we have assumed $\eta=0$ (which implies $\N=\bm{0}$). Thus, we have
\begin{eqnarray}\label{eqn:err_quad_1}
    \left(\sum_{i=1}^q w_i \left( (\Sop^h \circ \mathcal{P} - \Soph \circ \Ph)(f,\theta,0)[\x_i] \right)^2\right)^{1/2}
    &=& \left(\sum_{i=1}^q w_i \left( U^{nodal}_i - \wh{U}^{nodal}_i \right)^2\right)^{1/2}\notag\\
    &=& \|\W_{\frac{1}{2}} (\U^{nodal} -\wh{\U}^{nodal})\|_2\notag\\
    &=& \|\W_{\frac{1}{2}} \left(\V \K^{-1}(\theta^h) \M \F -\T \D(\Th) \A \Vh \F\right)\|_2 \notag \\
    &=&\|\W_{\frac{1}{2}} \V \left( \K^{-1}(\theta^h) -\V^{-1}\T \D(\Th) \A \wh{\V} \M^{-1}\right)\M \F \|_2 \notag \\
    &=& \|\W_{\frac{1}{2}}  \V \left(\K^{-1}(\theta^h) -\Q(\Th) \right)\M \F\|_2.
\end{eqnarray}

Next, we combining \eqref{eqn:quad_out} and \eqref{eqn:gen_err_num}, we get
\begin{eqnarray}\label{eqn:err_quad_2}
    \left(\sum_{i=1}^q w_i \left( (\Sop^h \circ \mathcal{P} - \Soph \circ \Ph)(f,\theta,0)[\x_i] \right)^2\right)^{1/2} &\leq& \wh{\Er}(f,\theta,0) + \frac{\sqrt{C_Q(u^h-\wh{u})}}{ q^{\gamma/2}} \notag \\
    &\leq& 2\cons_{h,\dbX}\epsilon_h + \ol{\cons}_S \epsilon_s + \sqrt{\epsilon_t} \notag \\
    && + \ol{\cons}_Q \left( \frac{1}{k^{\alpha/2}} + \frac{1}{(k^\prime)^{\alpha^\prime/2}} + \frac{1}{q^{\gamma/2}}\right)  + \frac{\sqrt{C_{Q,h}}}{q^{\gamma/2}} \notag \\
    &\leq& 2\cons_{h,\dbX}\epsilon_h + \ol{\cons}_S \epsilon_s + \sqrt{\epsilon_t} \notag \\
    && + \widetilde{C}_Q \left( \frac{1}{k^{\alpha/2}} + \frac{1}{(k^\prime)^{\alpha^\prime/2}} + \frac{1}{q^{\gamma/2}}\right),
\end{eqnarray}
where $\widetilde{C}_Q = \ol{\cons}_Q + \sqrt{C_{Q,h}}$. Finally, combining \eqref{eqn:err_quad_1} and \eqref{eqn:err_quad_2}, we get required estimate
\begin{eqnarray*}
  &&\|\W_{\frac{1}{2}}  \V \left(\K^{-1}(\theta^h) -\Q(\Th) \right)\M \F\|_2 \leq 2\ol{\cons}_{h}\epsilon_h + \ol{\cons}_S \epsilon_s + \sqrt{\epsilon_t} \notag \\
    && \hspace{6cm}+ \widetilde{C}_Q \left( \frac{1}{k^{\alpha/2}} + \frac{1}{(k^\prime)^{\alpha^\prime/2}} + \frac{1}{q^{\gamma/2}}\right) \\
  &\implies& \|\W_{\frac{1}{2}}  \V \left(\K^{-1}(\theta^h) -\Q(\Th) \right)\M \|_2 \leq 2\ol{\cons}_{h}\epsilon_h + \ol{\cons}_S \epsilon_s + \sqrt{\epsilon_t} \notag \\
    && \hspace{6cm}+ \widetilde{C}_Q \left( \frac{1}{k^{\alpha/2}} + \frac{1}{(k^\prime)^{\alpha^\prime/2}} + \frac{1}{q^{\gamma/2}}\right) \\
  &\implies& \|\K^{-1}(\theta^h) - \Q(\Th) \|_2 \leq \Bigg[2\ol{\cons}_{h}\epsilon_h + \ol{\cons}_S \epsilon_s + \sqrt{\epsilon_t} \notag \\
    && \hspace{4cm}+ \widetilde{C}_Q \left( \frac{1}{k^{\alpha/2}} + \frac{1}{(k^\prime)^{\alpha^\prime/2}} + \frac{1}{q^{\gamma/2}}\right) \Bigg]\|\W_{\frac{1}{2}}^{-1}\|_2\|\V^{-1}\|_2\|\M^{-1}\|_2.
\end{eqnarray*}
\end{proof}

\section{Extension to nonlinear problems}\label{sec:nonlinear}
In this section we describe the extension of the \net to nonlinear partial differential equations. To fix ideas, we consider a time-independent nonlinear advection-diffusion-reaction equation with inhomogeneous source term and Dirichlet and Neumann boundary conditions. The problem is given by,
\begin{eqnarray}
- \nabla \cdot (\theta \nabla u) + \bm{a} \cdot \nabla u + \rho u - f &=& 0, \mbox{ in } \Omega, \label{eqn:adr1}\\
u &=& g, \mbox{ on } \Gamma_g, \label{eqn:adr2}\\
% -\dradd{\kappa}{\theta} \nabla u \cdot \bm{n} &=& \eta, \mbox{ on } \Gamma_\eta.
-\theta \nabla u \cdot \bm{n} &=& \eta, \mbox{ on } \Gamma_\eta. \label{eqn:adr3}
\end{eqnarray}
Here $\theta, \bm{a}$, and $\rho$ are the diffusion, advection and reaction coefficients and are allowed to be functions of $u$ and the spatial coordinates. Further $f$ is the forcing function, 
%$h$
\clrr{$\eta$} is the prescribed Neumann data and $g$ is the prescribed Dirichlet data. Note that $\overline{\Gamma_\eta \bigcup \Gamma_g} = \partial \Omega$.

The weak formulation of this PDE is given by: find $u \in \mathcal{V} \equiv H^1 (\Omega)$, such that $\forall w \in \mathcal{V}$,
% \begin{eqnarray}
% a(w,u) &=& (w,f) + (w,h)_{\Gamma_\eta} + ( -  \theta \nabla w \cdot \bm{n}+ \beta w,g)_{\Gamma_g}. \label{eq:weak_nl}
% \end{eqnarray}
\begin{eqnarray}
a(w,u) &=& (w,f) + (w,\eta)_{\Gamma_\eta} + ( -  \theta \nabla w \cdot \bm{n}+ \beta w,g)_{\Gamma_g}. \label{eq:weak_nl}
\end{eqnarray}
Here $a(\cdot,\cdot)$ is linear in its first argument and nonlinear in the second, and is defined as 
\begin{eqnarray}
    a(w,u) \equiv (\nabla w, \theta \nabla u) + (w, \bm{a} \cdot \nabla{u} + \rho u) - (w, \theta \nabla u \cdot \bm{n})_{\Gamma_g} + (- \theta \nabla w \cdot \bm{n}+ \beta w,u)_{\Gamma_g}. \label{eq:defa}
\end{eqnarray}
Further,  $\beta$ is a numerical parameter. It is easily shown that the formulation above is consistent, and when the weighting function and trial solution spaces are approximated by their finite dimensional counterpart, it leads to a finite element solution that converges to the exact solution \cite{bazilevs2007weak}. This formulation is often referred to as the weak imposition of Dirichlet boundary conditions.

Next, as in Section \ref{sec:fem}, we approximate the space $\mathcal{V}$ with its finite dimensional counterpart $\mathcal{V}^h$, which is spanned by a set of continuous basis functions $\{\phi_i(\x)\}_{i=1}^q$. Further expanding the weighting functions, the approximate solution and the approximate PDE data 
%($f,h,\eta$) 
($f,\eta,g$) using this basis, and substituting this in (\ref{eq:weak_nl}), we arrive at,
\begin{equation}
    \bm{r} (\bm{U}) = \bm{M} \bm{F} +  \widetilde{\bm{M}} \bm{N} + \breve{\bm{M}} \bm{G}. \label{eq:defr_nl}
\end{equation}
Here $\bm{U}, \bm{F}, \bm{N}$ and $\bm{G} \in \mathbb{R}^q$ are the coefficients in the expansion of $u^h, f^h, \eta^h$ and $g^h$, respectively. Further, $\bm{r}: \mathbb{R}^q \to \mathbb{R}^q$ is given by,
\begin{eqnarray}
    r_i (\bm{U}) \equiv a(\phi_, U_j \phi_j), \qquad 1 \le i,j \le q.
\end{eqnarray}
The matrices $\bm{M}$ and $\widetilde{\bm{M}}$ are as defined in \eqref{eqn:mat}, and the matrix $\breve{\bm{M}}$ is given by
\begin{eqnarray}
    \breve{M}_{ij}  \equiv ( -  \theta \nabla \phi_i \cdot \bm{n}+ \beta \phi_i ,\phi_j)_{\Gamma_g}, \qquad 1 \le i,j \le q.
\end{eqnarray}
Note that due to dependence of $\theta$ on $u$, $\breve{\bm{M}}$ is a function of $\bm{U}$; however, for simplicity we ignore this dependence.

Assuming that (\eqref{eq:defr_nl}) is solvable for $\bm{U}$, we may write 
\begin{equation}\label{eqn:nonlinear_fem}
    u^h( \x) = (\bm{R}^{-1}( \bm{M} \bm{F} +  \widetilde{\bm{M}} \bm{N} + \breve{\bm{M}} \bm{G} ))^T \bPhi, 
\end{equation}
where 
%$\bm{R}^{-1}$ represents a nonlinear map between its input and output.
$\bm{R}^{-1}: \Ro^q \rightarrow \Ro^q$ solves for $\U$ in \eqref{eq:defr_nl}  
This suggests the form of the \net shown in Figure \ref{fig:NLvarmion}(a). In this we observe that $(f,\eta,g)$ are evaluated at sensor nodes to construct the input vectors $\Fh \in \Ro^k$, $\Nh \in \Ro^{k^\prime}$ and $\Gh \in \Ro^{k^{\prime\prime}}$. These input vectors are transformed by linear operators (matrices) $\A, \At, \breve{\A}$ respectively to given vectors whose dimension is the same as the latent dimension $p$. Mimicking \eqref{eqn:nonlinear_fem}, these three vectors are added and then passed though a nonlinear network $\mathcal{N}$ and transformed into a the vector $\bbeta(\Fh,\Nh,\Gh) \in \Ro^p$, where
\begin{equation}\label{eqn:varmionNL_branch}
\bbeta(\Fh,\Nh,\Gh) = \mathcal{N}(\A\Fh + \At \Nh + \breve{\A} \Gh).
\end{equation}
The network $\mathcal{N}$ is the analog of the discrete operator $\bm{R}^{-1}$. 
A dot product with the trunk vector $\btau(\x) \in \Ro^p$ yields the final solution predicted at $\x$. The corresponding VarMiON operator is given by
\begin{equation}\label{eqn:NLnet}
    \widehat{\Sop}^{NL} : \Ro^k \times \Ro^{k^\prime} \times   \Ro^{k^{\prime\prime}} \rightarrow \dbV^\tau, \qquad \widehat{\Sop}^{NL}(\Fh,\Nh,\Gh) = \widehat{u}(.;\Fh,\Nh,\Gh) = \bbeta(\Fh,\Nh,\Gh)^\top \btau.
\end{equation}

We also observe that the exact solution operator is homogeneous. That is, when all the input functions are zero, the corresponding solution is zero. At the discrete level, we can interpret this as $\bm{R}^{-1}(\bm{0}) = \bm{0}$. The \net operator can also be endowed by this property, for instance by using the following modified branch instead of \eqref{eqn:varmionNL_branch} 
\begin{equation}\label{eqn:varmioncNL_branch}
\bbeta_c(\Fh,\Nh,\Gh) = \widehat{\Z} \odot \mathcal{N} (\widehat{\Z}), \quad \widehat{\Z} = \A\Fh + \At \Nh + \breve{\A} \Gh
\end{equation}
where $[\bm{a} \odot \bm{b}]_i = a_i b_i$ represents the Hadamard product of the vectors $\bm{a},\bm{b}$ with equal dimensions. We call the corresponding constrained operator network as VarMiON-c, which is depicted in Figure \ref{fig:NLvarmion}(b). In the numerical section \ref{sec:eikonal} we consider both architectures, one where this constraint is imposed and another where it is not imposed. Not surprisingly, we find that the architecture where this constraint is imposed performs better.
%the the discrete values of $f^h$, $\eta^h$ and $g^h$ are all mapped through a linear operator (a matrix) to a vector whose dimension is the same as that of the latent space of the \net. As per the equation above, these vectors are added and then passed through a nonlinear transform which is represented by a deep neural network. This leads to the latent vector for the branch whose dot product with the basis function then yields the final solution. 

\begin{figure}[htbp]
\begin{center}
\subfigure[VarMiON]{
\includegraphics[width=0.48\textwidth]{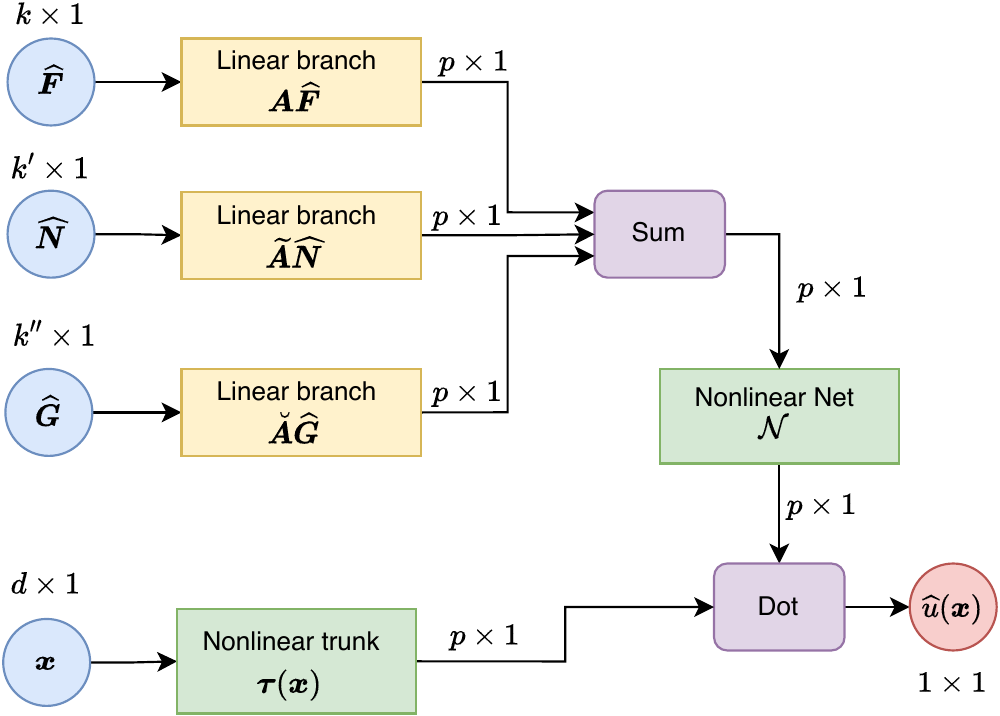}}\hfill
\subfigure[VarMiON-c]{
\includegraphics[width=0.48\textwidth]{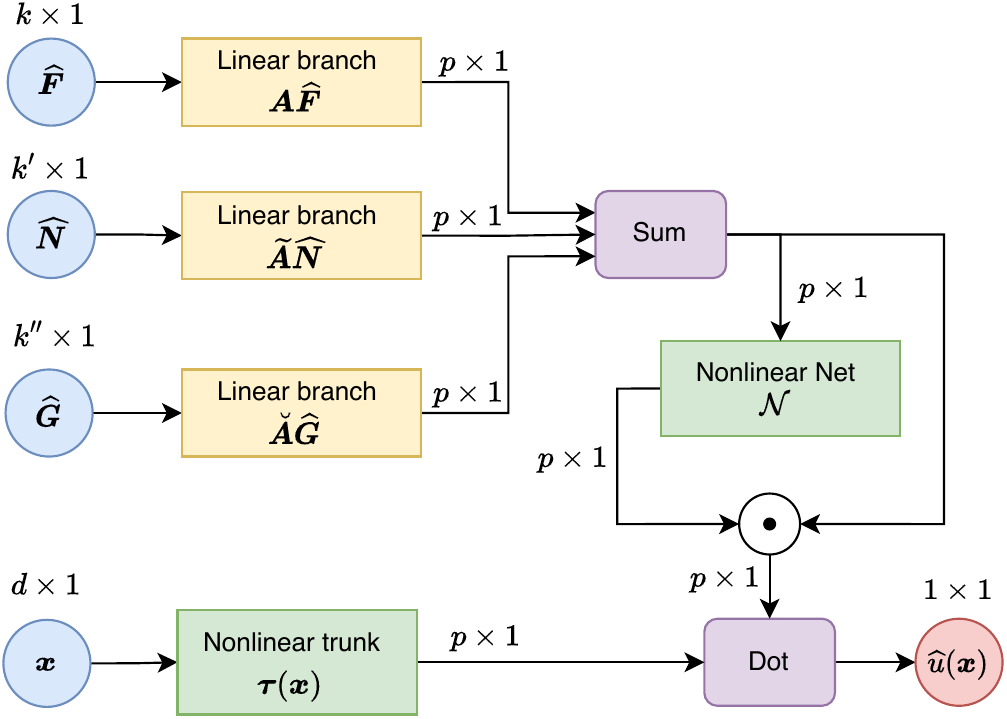}}
\caption{\net architectures for nonlinear advection-diffusion-reaction equation.}
\label{fig:NLvarmion}
\end{center}
\end{figure}

\section{Numerical results}\label{sec:num}

In this section we compare the performance of the \net with a vanilla DeepONet which consists of a branch and a trunk network. We note that the vanilla DeepONet only works with a single input function and so for the benchmark comparison we simply concatenate all the values of the input functions ($\theta$, $f$, $\eta$, etc) through a common fully-connected branch network. In contrast, as shown in Figure \ref{fig:varmion}, the architecture of the \net is more nuanced. The precise architectures of the networks are shown in the Appendix. When comparing the performance of these networks we ensure that the number of network parameters in each is approximately the same (see Table \ref{tab:result_summary} and  \ref{tab:result_summary_NL} for precise values). We also ensure that the architecture of the network used for constructing the basis functions (often referred to as the trunk network) is identical. 

We first consider the steady-state heat conduction problem as the prototypical linear elliptic PDE model. We demonstrate that the conclusions drawn from the numerical examples are broadly applicable by considering different sampling strategies for the input functions (spatially random and uniform sampling), different methods for generating basis functions (ReLU networks and radial basis functions (RBFs)), and operators with varying number of input functions (two in Section \ref{sec:two_input_linear} and three in Section \ref{sec:three_input_linear}). In each case, we observe that the \net yields more accurate results than the vanilla DeepONet.

For the experiments using an RBF trunk, in Section \ref{sec:mionet_data_coverage} we also compare the results with those obtained using a MIONet architecture which was proposed in \cite{Jin2022} to suitably handle multiple input functions. In a MIONet architecture, a separate branch is constructed for each input function. Hadamard products are performed between the outputs of each branch sub-network, which results in a final branch output vector of size equal to the latent dimension. The MIONet output is obtained by taking a dot product of this vector with the trunk output. Specific architectures of the MIONet are detailed in the Appendix. In numerous science and engineering applications, the amount of available training data is constrained. Given this, we also evaluate the performance of the networks as a function of the size of the training data set in Section \ref{sec:mionet_data_coverage}.

We end by designing a variationally mimetic architecture for the nonlinear regularized Eikonal equation with a varying source function in Section \ref{sec:eikonal}. Here, we compare the performance of the vanilla DeepONet with both the VarMiON and the constrained VarMiON-c architectures as discussed in Section \ref{sec:nonlinear}.

\subsection{Steady-state heat equation}
We consider the operator defined by the solution to the steady-state heat conduction problem given by,
\begin{equation}\label{eqn:heat_con}
\begin{aligned}
- \nabla \cdot ( \theta(\x) \nabla u (\x)) &=  f(\x), \quad &&\forall \ \x \in  \Omega, \\
0\theta (\x) \nabla u(\x) \cdot \bm{n} (\x) &= \eta (\x), \quad &&\forall \ \x \in  \Gamma_\eta,\\
u(\x) &= 0, \quad &&\forall \ \x  \in  \Gamma_g,
\end{aligned}
\end{equation}
where $u$ is the temperature field, $\theta$ is the thermal conductivity, $f$ represents volumetric heat sources and $\eta$ is the heat flux through a part of the boundary. The domain $\Omega$ is a unit square, the boundary $\Gamma_{\eta}$ represents the left and right edges, and the boundary $\Gamma_g$ represents the top and bottom edges. Our goal is construct approximations to the operator that maps $\theta, f$ and $\eta$ to the solution $u$.

For both training and test data, the functions $\theta$ and $f$ are selected to be Gaussian random fields defined on $\Omega$ with length scales of 0.4 and 0.2, respectively. Both $f$ and $\theta$ are scaled so that they assume values in the interval $(0.02, 0.99)$. The function $\eta$ is a Gaussian random field defined on $\Gamma_\eta$ with length scale of 0.3 and is scaled to be in the interval $(-1.,1.)$.

\subsubsection{Operators with two input functions}\label{sec:two_input_linear}

%\begin{figure}[htbp]
%\begin{center}
%\includegraphics[width=0.75\textwidth]{Figures/2inputs_dataset_random.png}
%\caption{Representative samples from the training dataset for the two input functions and randomly sampled input case. \textit{Top row}: four samples of conductivity ($\theta$) along with conductivity sensors (shown with green squares). \textit{Left column}: four samples of heat source ($f$) along with source sensors (shown with green crosses). The remaining sixteen fields represent the corresponding temperature field along with temperature sensors (shown with green asterisks).}
%\label{fig:dataset}
%\end{center}
%\end{figure}

In this section we consider a \net and a vanilla DeepONet that map $\theta$ and $f$ to the temperature field, $u$. The training data is generated by creating 10,000 realizations of pairs of $\theta$ and $f$.
%100 realizations each of $\theta$ and $f$, and then considering 10,000 pairs $\theta$ and $f$ formed from their tensor product. 
This input is used in FEniCS to solve the steady-state heat conduction problem with a mesh of $32 \times 32$ linear finite elements to yield 10,000 realizations of the triad $(\theta, f, u)$. Out of these, 9,000 are used for training and validation and 1,000 are used for testing. We select 224 output sensor nodes out of which 100 are distributed randomly inside the domain and 124 are distributed along all boundary nodes.
%Images of 16 ($ 4 \times 4$) randomly selected fields from the training data set are shown in Figure \ref{fig:dataset}.

\begin{table}[ht]
\renewcommand{\arraystretch}{1.}
\centering
\caption{Summary of the \net and vanilla DeepONet performance.}
\begin{adjustbox}{width=0.85\linewidth}
\begin{tabular}{c c c c}
\toprule
Case & Model & Number of parameters & Relative $L_2$ error \\
\toprule
Two input functions with & DeepONet (w/ ReLU trunk) & 111,248 & 1.07 $\pm$ 0.39 \% \\
randomly sampled input & \net (w/ ReLU trunk) & {109,013} & {0.93 $\pm$ 0.28} \% \\
\midrule
Two input functions with & DeepONet (w/ ReLU trunk) & 49,928 & 1.98 $\pm$ 0.79 \% \\
uniformly sampled input & \net (w/ ReLU trunk) & {46,281} & {1.05 $\pm$ 0.42} \% \\
\midrule
Two input functions with  & DeepONet (w/ RBF trunk) & 17,911 & 1.39 $\pm$ 0.60  \% \\
 %& \revii{}{MIONet (w/ RBF trunk)} & \revii{}{17,216} & \revii{}{1.66 $\pm$ 0.78}  \% \\
uniformly sampled input & \net (w/ RBF trunk) &{17,345} & {0.64 $\pm$ 0.35} \% \\
\midrule
Three input functions with & DeepONet (w/ RBF trunk) &{24,543} & {5.99 $\pm$ 4.24} \% \\
uniformly sampled input & \net (w/ RBF trunk) & {23,065} &  {2.06 $\pm$ 0.90} \% \\ %5.43
\bottomrule
\end{tabular}\label{tab:result_summary}
\end{adjustbox}
\end{table}

We test the performance of the operator networks under different scenarios. These include (a) sampling the input functions at randomly selected points (100 in total) or on a uniform $10 \times 10$ grid, and (b) constructing the basis functions using a ReLU network or a linear combination of radial basis functions whose centers and widths are determined during the training process. This yields a total of three cases for which we train the \net and the vanilla DeepONet. In each case, we train the network using the Adam optimizer \cite{kingma2017adam} until the training error saturates, and we then select the model corresponding to the lowest validation loss as the final trained network. Thereafter, we test the performance of this network on 1,000 test samples  and evaluate the normalized $L_2$ error between the network and the finite element solutions. The average and standard deviation of this error computed over all the test samples is presented in Table \ref{tab:result_summary}. From this table, we conclude that in each case the \net incurs lower average error than the vanilla DeepONet. Further, in each case the standard deviation of the error is lower with the \net as compared to the vanilla DeepONet indicating its robustness. Further, the drop in error when using uniform sampling is significant (a factor of $\approx 1/2$).
%
%    
%    \item The use of radial basis functions yields more accurate solutions for both the \net and the vanilla DeepONet. 
%    
%\end{enumerate}

%\begin{figure}[htbp]
%\begin{center}
%\includegraphics[width=0.55\textwidth]{Figures/2inputs_boxplot.png}
%\caption{Box plot comparing performance of \net (w/ ReLU trunk) and DeepONet %(w/ ReLU trunk) for two input functions and uniformly sampled input case.}
%\label{fig:2inputs_boxplot}
%\end{center}
%\end{figure}

\begin{figure}[htbp]
\begin{center}
\includegraphics[width=0.99 \linewidth]{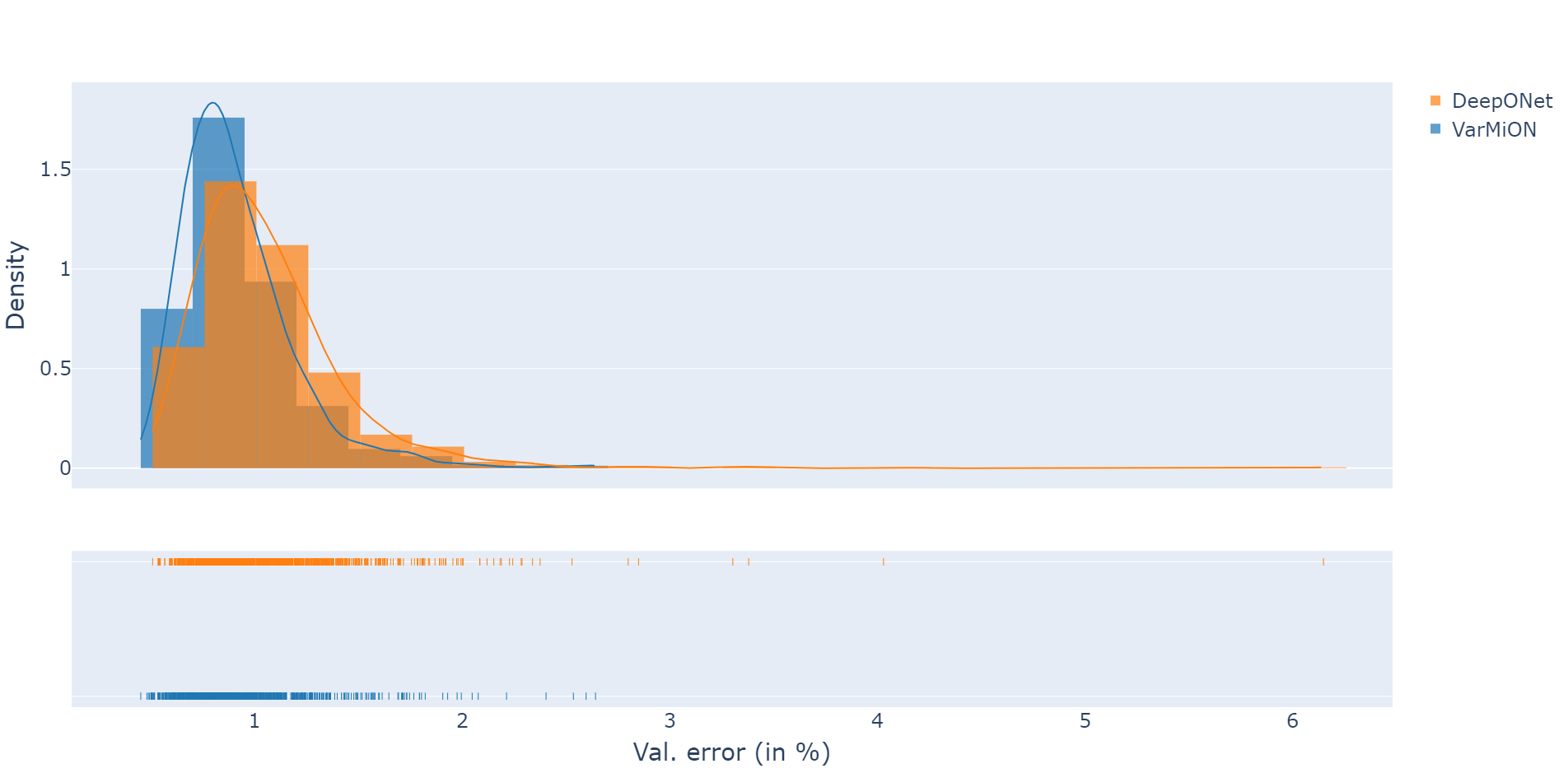}
\end{center}
\caption{\reviii{}{Probability density (top) and rug plot (bottom) of the test error for the case with two input functions sampled on a random grid with ReLU trunk for the \net (blue) and the vanilla DeepONet (orange). In the rug plot each tick indicates one sample.}}
\label{fig:2inputs_random_hist}
\end{figure}

\begin{figure}[htbp]
\begin{center}
\includegraphics[width=0.99 \linewidth]{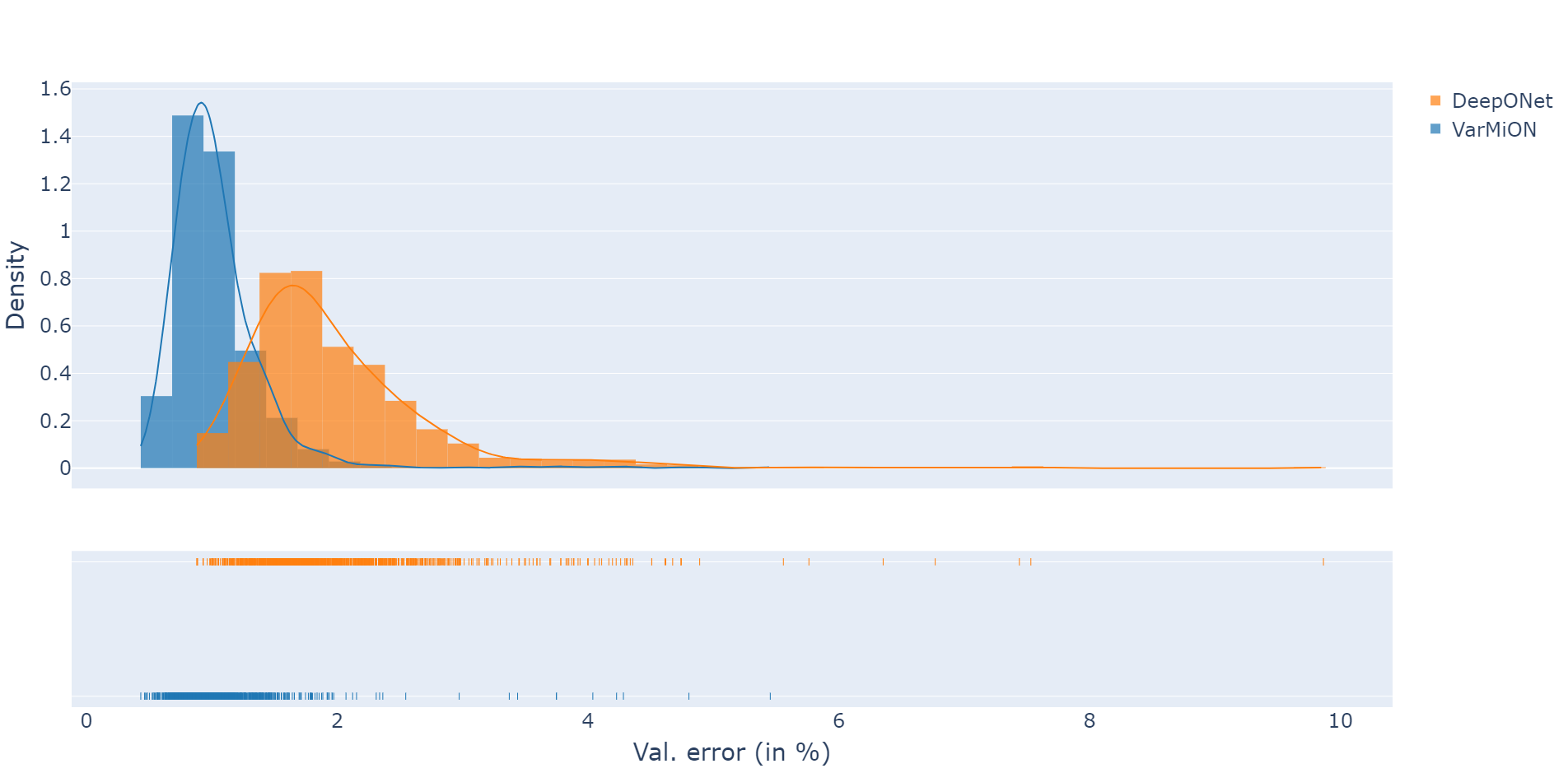}
\end{center}
\caption{\reviii{}{Probability density (top) and rug plot (bottom) of the test error for the case with two input functions sampled on a uniform grid with ReLU trunk for the \net (blue) and the vanilla DeepONet (orange). In the rug plot each tick indicates one sample.}}
\label{fig:2inputs_uniform_hist}
\end{figure}

\begin{figure}[htbp]
\begin{center}
\includegraphics[width=0.99 \linewidth]{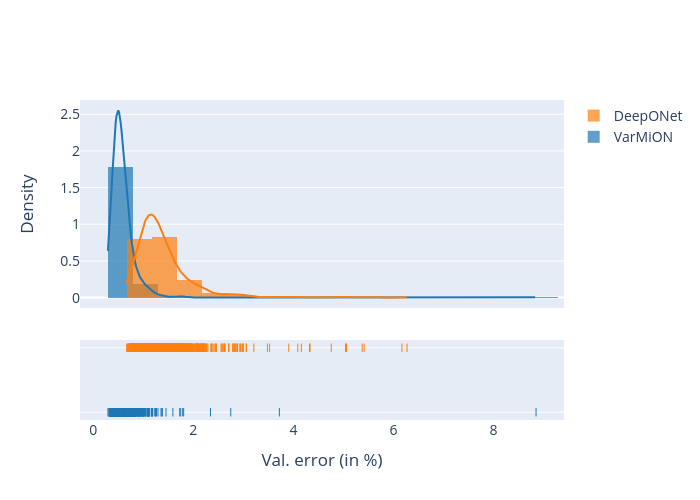}
\end{center}
\caption{\reviii{}{Probability density (top) and rug plot (bottom) of the test error for the case with two input functions sampled on a uniform grid with RBF trunk for the \net (blue) and the vanilla DeepONet (orange). In the rug plot each tick indicates one sample.}}
\label{fig:2inputs_uniform_rbf_hist}
\end{figure}

A more refined analysis of the error involves examining the discrete probability density of scaled $L_2$ error for the two network types (\net and the vanilla DeepONet) as shown in Figures \ref{fig:2inputs_random_hist},   \ref{fig:2inputs_uniform_hist}, and \ref{fig:2inputs_uniform_rbf_hist}. In these figures, the discrete probability density for the error is shown on the top and the distribution of individual errors is shown below in the rug plot, where each tick represents one sample. We observe that the error probability density for the \net is much tighter, with very few cases where the error is significantly larger than the mean error. This is not the case for the vanilla DeepONet, where for some test cases the network results have very large errors (around 10\%). Thus we conclude that the performance of the \net is more robust. 

In Figure \ref{fig:2inputs_random_predictions}, we present the true solution, the \net solution, and the vanilla DeepONet solution for five different instances of input functions. We consider operator networks with a ReLU trunk and a spatially random grid for sampling the input functions. These instances were selected from the 1,000 test samples to highlight the heterogeneity in the spatial variation of the solution. This heterogeneity can be observed by considering the true solution (3rd column in the figure) which displays significant differences among the five instances. From this figure we observe that for each instance, both the \net solution (4th column) and the DeepONet solution (6th column) capture the overall behavior of the solution. However, by considering corresponding errors (5th and 7th columns) we observe that the error in the DeepONet solution is significantly higher. Figures \ref{fig:2inputs_uniform_predictions} and \ref{fig:2inputs_uniform_predictions_rbf}, are the corresponding figures for operator networks with uniform spatial sampling of input functions, and a ReLU or an RBF trunk, respectively. From these figures also we observe that \net solution incurs smaller error.

\begin{figure}[htbp]
\begin{center}
\includegraphics[width=\textwidth]{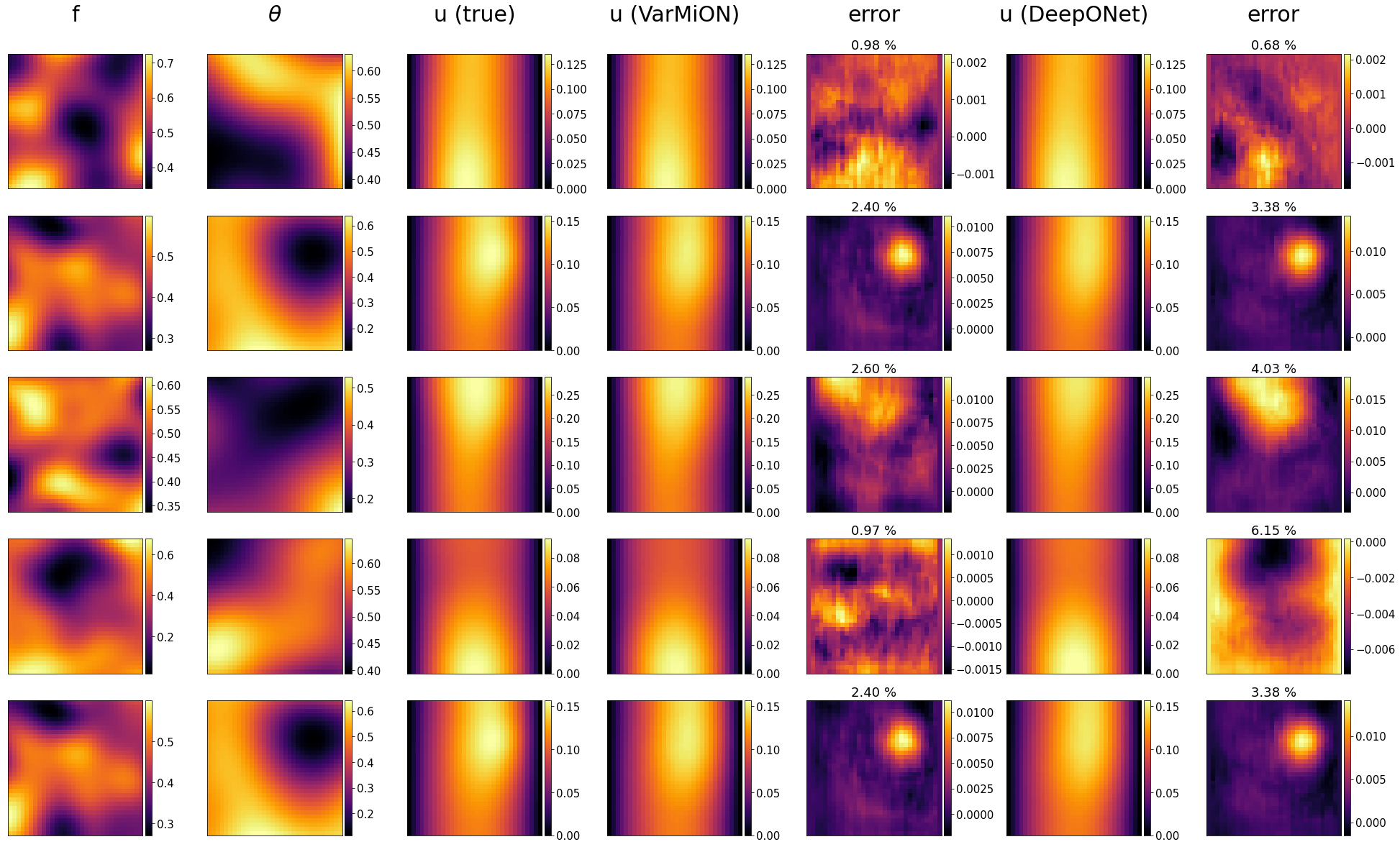}
\caption{\reviii{}{\textit{Temperature prediction for five representative samples from the test set for the case of two input functions sampled on a random grid (with ReLU trunk)}: (First column) source field, (second column) conductivity field (third column) true temperature field (fourth and sixth column) temperature field prediction by \net and DeepONet respectively, (fifth and seventh column) corresponding error (normalized $L_2$ error is shown at the top).}}
\label{fig:2inputs_random_predictions}
\end{center}
\end{figure}

\begin{figure}[htbp]
\begin{center}
\includegraphics[width=\textwidth]{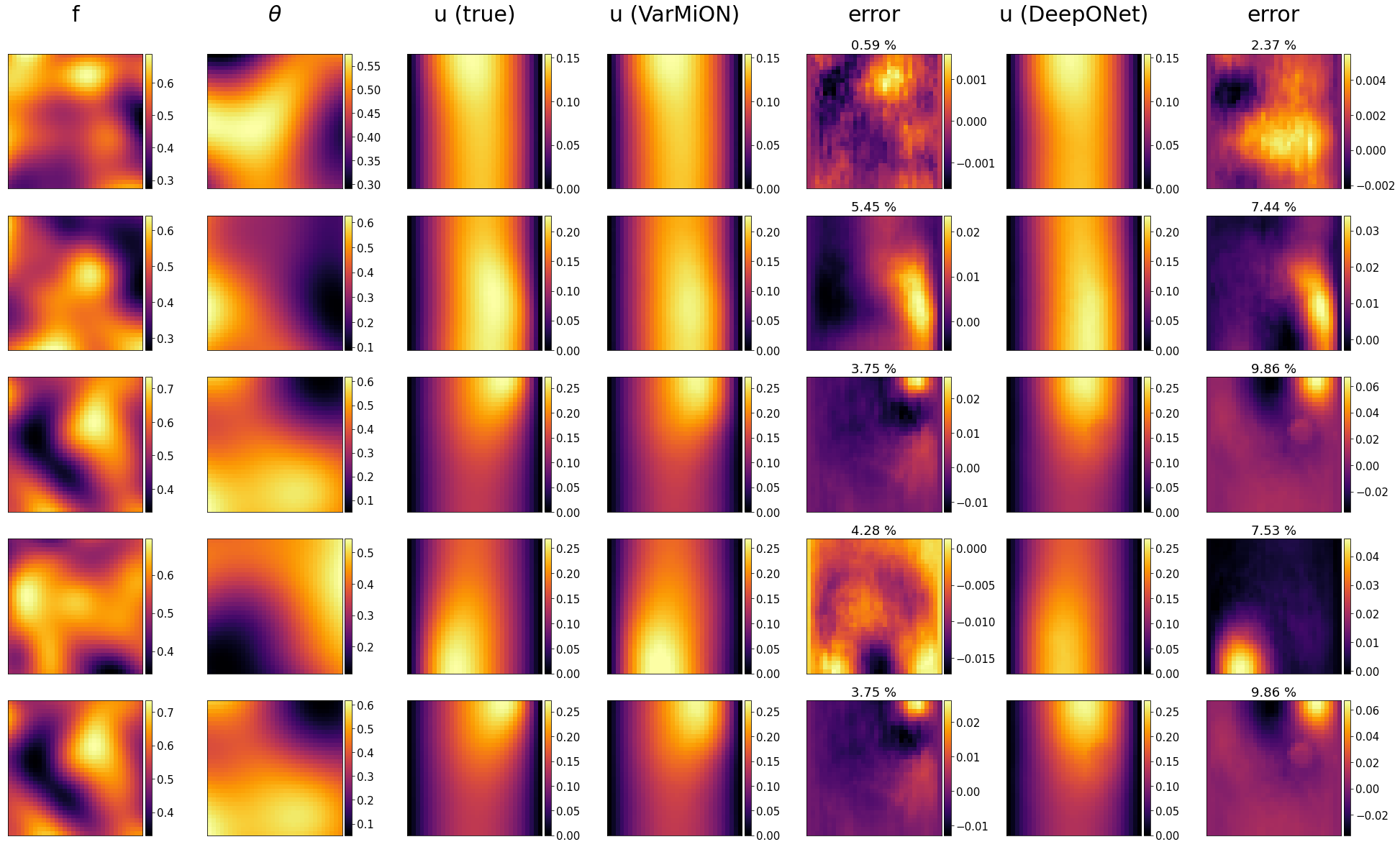}
\caption{\reviii{}{\textit{Temperature prediction for five representative samples from the test set for the case of two input functions sampled on a uniform grid (with ReLU trunk)}: (First column) source field, (second column) conductivity field (third column) true temperature field (fourth and sixth column) temperature field prediction by \net and DeepONet respectively, (fifth and seventh column) corresponding error (normalized $L_2$ error is shown at the top).}}
\label{fig:2inputs_uniform_predictions}
\end{center}
\end{figure}

\begin{figure}[htbp]
\begin{center}
\includegraphics[width=\textwidth]{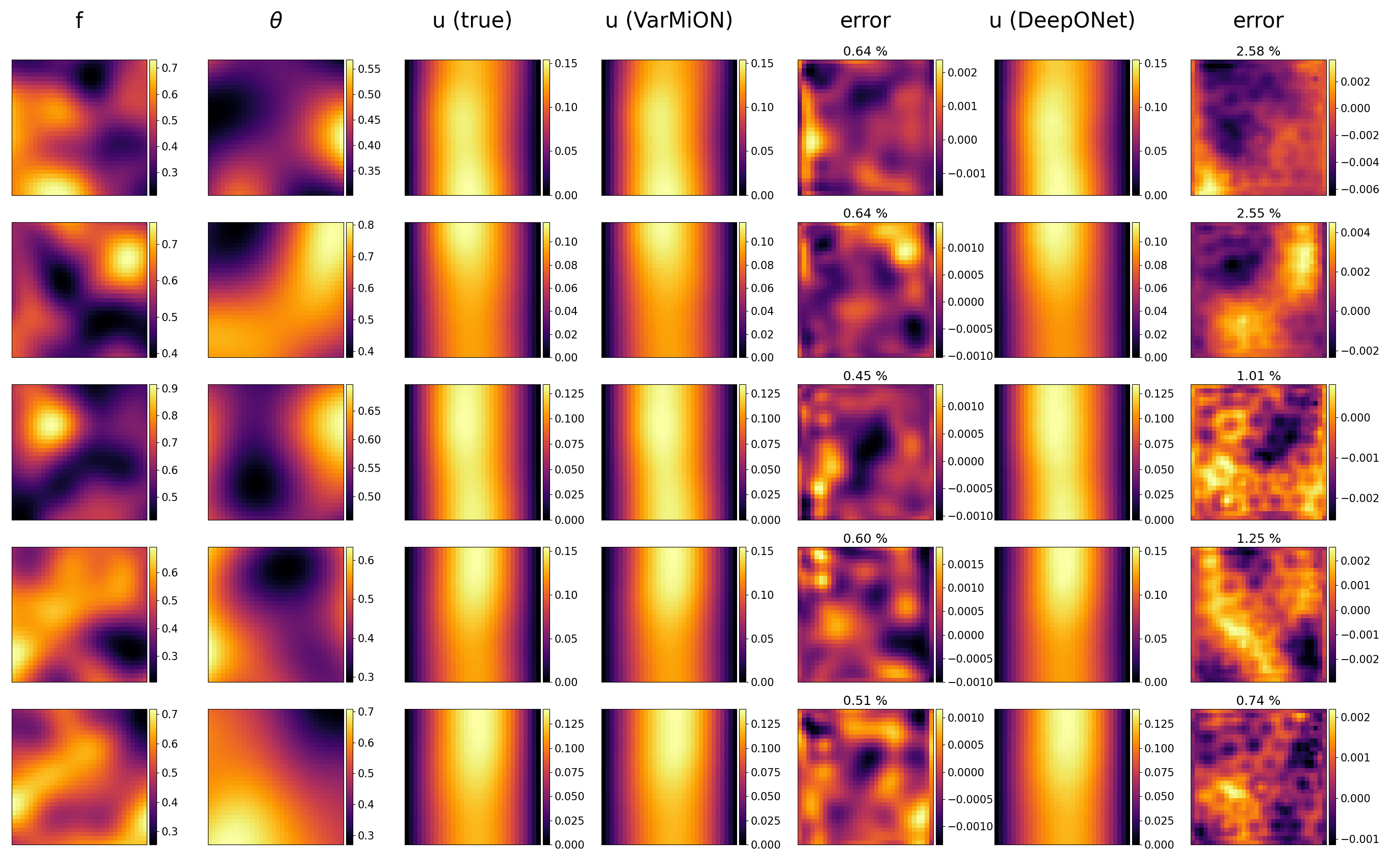}
\caption{\reviii{}{\textit{Temperature prediction for five representative samples from the test set for the case of two input functions sampled on a uniform grid (with RBF trunk)}: (First column) source field, (second column) conductivity field (third column) true temperature field (fourth and sixth column) temperature field prediction by \net and DeepONet respectively, (fifth and seventh column) corresponding error (normalized $L_2$ error is shown at the top).}}
\label{fig:2inputs_uniform_predictions_rbf}
\end{center}
\end{figure}

\subsubsection{Operators with three input functions}\label{sec:three_input_linear}

Next we consider \net and vanilla DeepONet operators that map $\theta$, $f$ and $h$ to the temperature field, $u$. The specification of the input fields remains unchanged from the previous section.
%, with the exception that  now $\theta$ is scaled so that is in in the interval $(1.0, 2.0)$. 

To the best of our knowledge this is the first instance of training and testing operator networks for PDEs with more than two input fields. The training data is generated by creating 10,000 realizations each of the triad $(\theta,f,\eta)$.
%and then considering 15,625 triads formed from their tensor product. 
This  input is used in FEniCS to solve the steady-state heat conduction problem to yield 10,000 realizations of $(\theta, f,\eta, u)$. Of these, 9000 are used for training and validation, and 1000 are used for testing. For both the \net and the vanilla DeepONet uniform sampling points are used for the input functions, and radial basis functions are used in the trunk. 

%Images of some randomly selected fields from the training data set are shown in Figure \ref{fig:3inputs_uniform_predictions_rbf}.

%Since the results in the previous section revealed that network with  was the most accurate, in this study we consider only this option. 
In Table \ref{tab:result_summary}, we have reported the average scaled $L_2$ error for the \net and the vanilla DeepONet formulation. Once again, we observe that \net performs much better. Its error is approximately three times smaller than the DeepONet error. This is also observed in Figure \ref{fig:3inputs_uniform_rbf_hist}, we have plotted error histograms for the two networks. Once again we observe that the error distribution for the \net solution is much tighter and centered closer to the origin, thereby indicating that this operator is more robust and generalizes better to test data. The maximum error for the vanilla DeepONet is as high as $30\%$.

\begin{figure}[htbp]
\begin{center}
\includegraphics[width=0.99 \linewidth]{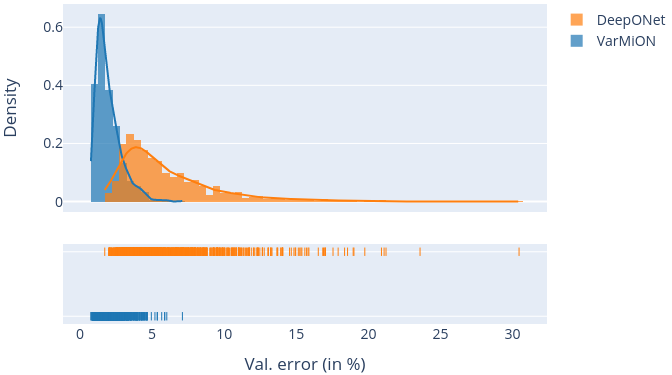}
\end{center}
\caption{\reviii{}{Probability density (top) and rug plot (bottom) of the test error for the case with three input functions sampled on a uniform grid with RBF trunk for the \net (blue) and the vanilla DeepONet (orange). In the rug plot each tick indicates one sample.}}
\label{fig:3inputs_uniform_rbf_hist}
\end{figure}

In Figure \ref{fig:3inputs_uniform_predictions_rbf}, we have plotted five randomly selected instances from the test set. In each case, we plot the forcing function (column 1), the thermal conductivity (column 2), the Neumann boundary condition (column 3), the true solution (column 4), the \net solution and its error (columns 5 \& 6), and the vanilla DeepONet solution and its error (columns 7 \& 8). In each instance, the \net solution is more accurate. 

\begin{figure}[htbp]
\begin{center}
\includegraphics[width=\textwidth]{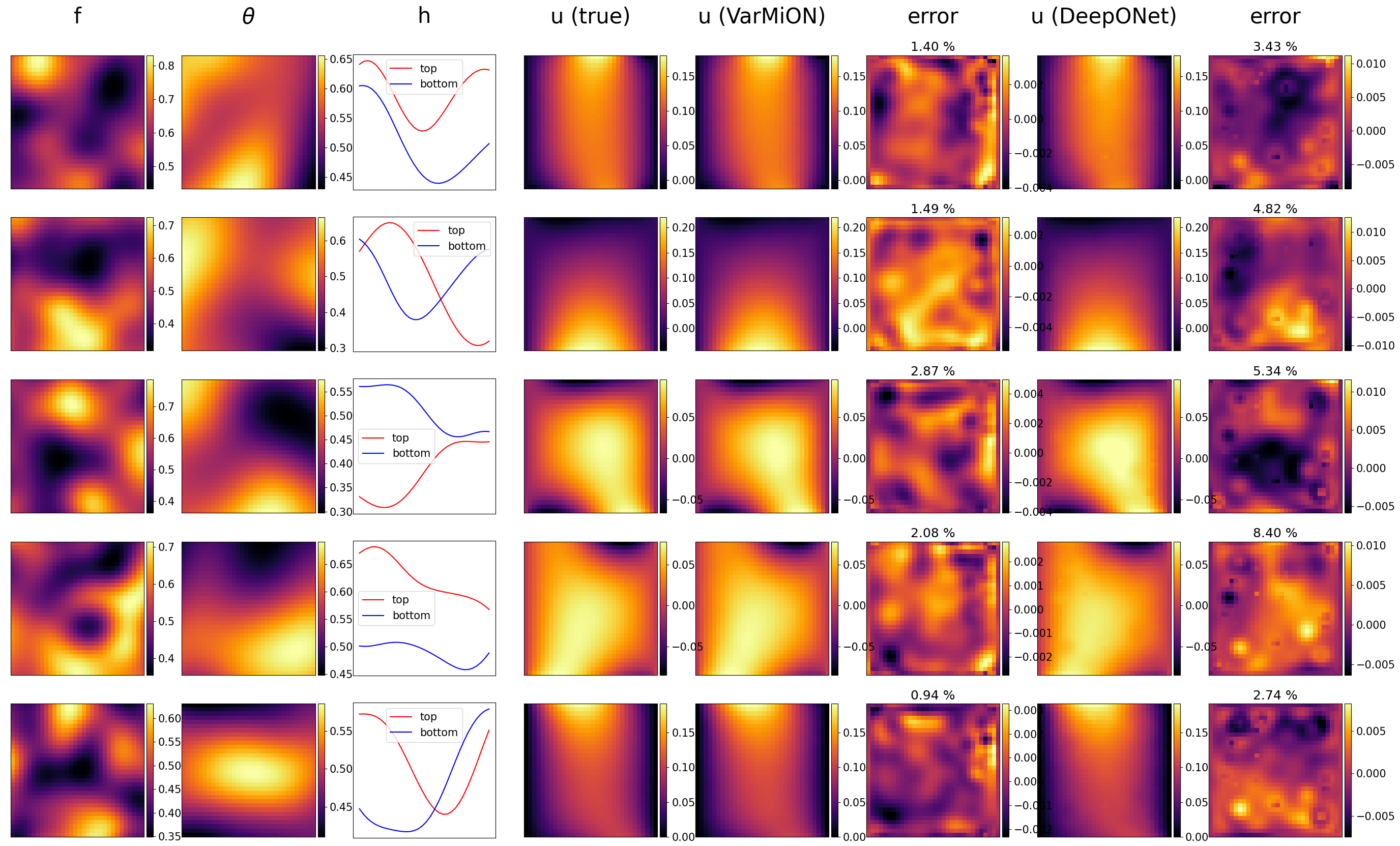}
\caption{\reviii{}{\textit{Temperature prediction for five representative samples from the test set for the case of three input functions sampled on a uniform grid (with RBF trunk)}: (First column) source field, (second column) conductivity field,  (third column) top and bottom Neumann data, (fourth column) true temperature field, (fifth and seventh column) temperature field prediction by \net and DeepONet respectively, (sixth and eighth column) corresponding error (normalized $L_2$ error is shown at the top).}}
\label{fig:3inputs_uniform_predictions_rbf}
\end{center}
\end{figure}

%with no case that incur a large amount of error. 

\revi{}{\subsubsection{Comparison with MIONet and the effect of the dataset coverage}\label{sec:mionet_data_coverage}}
\revii{}{One of the attractive features of VarMiON is its ability to work with multiple input functions. The MIONet \cite{Jin2022} is also designed to work with multiple input functions. Given this, in this section, we compare the VarMiON with the MIONet.  For this we consider uniformly sampled input data and RBF in the trunk network since these choices provided the most accurate results in the previous section. The precise details of MIONet architecture used can be found in the Appendix. For the two input case with uniformly sampled inputs and RBF trunk functions, the MIONet (with 17,216 parameters) has a relative $l_2$ error of 1.66 $\pm$ 0.78 \% which is more than double that of VarMiON error as reported in Table \ref{tab:result_summary}.}

\revi{}{We investigate the three inputs case while varying the training dataset size. Specifically, we train VarMiON, MIONet, and DeepONet with four different dataset sizes \{1000, 2000, 4000, 6000\} and test their performance on a separate test set with 1000 samples. The architectures of the VarMiON and DeepONet are kept the same as those considered in the previous section, while the corresponding MIONet architecture has 22,752 trainable parameters. The purpose of this study is to investigate how these different operator learning models perform when we have limited training data.}
%we investigate the effect of reductions in the cardinality of the training dataset on the predictive performance of VarMiON, MIONet, and vanilla DeepONet. We ask the question whether VarMiON's conformity to the variational form leads to reduced sensitivity of the corresponding generalization error when the dataset is compromised. To address this question 
\revi{}{We consider two variations of the training dataset. The first dataset constitutes an array of solutions to (\ref{eqn:heat_con}) that are generated from a randomized realization of input functions $(\theta, f,\eta)$. The second set constitutes an array of solutions to (\ref{eqn:heat_con}) that are generated from an ordered realization of the input functions $(\theta, f,\eta)$. Such an ordering follows from a nested loop where we iterate over different realizations of $f$ (for some fixed $\theta$ and $\eta$) in the inner--most loop, the middle loop iterates over $\theta$, and finally the outer--most loop iterates over $\eta$. For both data sets, we observe how the generalization error varies when we only use the first $n\in\{1000,2000,4000,6000\}$ components for training. By ordering the dataset with respect to the input parameters in such a nested--loop manner and restricting of the training set to the first $n$ solutions, we are effectively reducing the variety of the input parameters used for generating the training set. Therefore, such a test investigates the sensitivity of the generalization error to the reduction in coverage by the training data of the PDE data space.}

\revi{}{Table \ref{tab:sensitivity_summary} shows the relative $L_2$ errors in the output of \net, MIONet and vanilla DeepONet when a randomized and ordered dataset of size $n\in\{1000,2000,4000,6000\}$ are used for training. It can be seen that \net output exhibits consistently less error than both MIONet and vanilla DeepONet. The gap between \net's and both of MIONet's and DeepONet's performance is substantially more in the case when an ordered dataset is used. Such results suggest that VarMiON's conformity to the variational formulation is beneficial and leads to superior efficiency, with respect to the dataset coverage, over MIONet and vanilla DeepONet.}

\revii{}{
We note that the MIONet performed very poorly in the 3 input case. We believe this might be a result of the inconsistency introduced 
by the construction of the MIONet proposed in \cite{Jin2022} when applied to this particular problem. Let us denote the the individual MIONet branches for the input function $f$, $\theta$ and $\eta$ by $\bbeta^1(\Fh)$, $\bbeta^2(\Th)$ and $\bbeta^3(\Nh)$, respectively. Now the MIONet formulation requires taking a Hadamard product of the output of the three branches, followed by a dot product with the trunk to obtain the final prediction at the node $\x$
\begin{equation}\label{eqn:MIONet}
\hat{u}(\x) = \left( \bbeta^1(\Fh) \odot \bbeta^2(\Th) \odot \bbeta^3(\Nh)\right)^\top \btau(\x).
\end{equation}
Further, as has been suggested by the authors in \cite{Jin2022}, the linearity in $f$ and $\eta$ dictate that the branches $\bbeta^1$ and $\bbeta^3$ are linear operators, which is similar to what is proposed by us for the VarMiON. Now, for the current PDE model \eqref{eqn:pde}, if $f\equiv 0$ but $\eta \neq 0$ and $\theta > 0$, the true solution need not be zero. The same holds if $\eta\equiv 0$ but $f \neq 0$ and $\theta > 0$. However, based on the MIONet formulation \eqref{eqn:MIONet}, the predicted solution will be identically zero if either $f$ or $\eta$ are zero, which leads to the inconsistency. Thus, we feel the original MIONet formulation will not lead to a good network for this 3 input problem, and may need to be suitably modified. Note that the VarMiON does not suffer from such an inconsistency.
}

\begin{table}[ht]
\renewcommand{\arraystretch}{1.}
\centering
\caption{\revi{}{Summary of the three-input \net, MIONet, and vanilla DeepONet performance when the training dataset size is reduced. All operator networks use an RBF trunk network.}}
\begin{tabular}{c c c c}
\toprule
Training dataset size   & Model & Relative $L_2$ error & Relative $L_2$ error \\
       $n$              &       &(randomized dataset)  & (ordered dataset) \\
\toprule
\multirow{2}{*}{$n=1000$}   & DeepONet  & $10.23 \pm 5.20$ & $38.65 \pm 12.71$ \\
                            & MIONet      & $ 88.07 \pm 69.68$ &  $ 58.96 \pm 39.07$  \\
                            & \net      & \bm{$4.27 \pm 2.23$} &  \bm{$17.82 \pm 8.84$}  \\
\midrule                                                   
\multirow{2}{*}{$n=2000$}   & DeepONet  & $9.00 \pm 5.63$ & $40.96 \pm 35.08$ \\
                            & MIONet      & $ 85.03 \pm 123.77$ &  $ 50.81 \pm 4.69 $  \\
                            & \net      & \bm{$4.04 \pm 1.86$} & \bm{$8.32 \pm 3.61$} \\
\midrule                                                   
\multirow{2}{*}{$n=4000$}   & DeepONet  & $7.19 \pm 3.75$ & $61.34 \pm 26.82$\\
                            & MIONet      & $ 88.39 \pm 62.10$ &  $ 76.98 \pm 10.01$  \\
                            & \net      & \bm{$2.90 \pm 1.50$} & \bm{$8.42 \pm 2.76$} \\
\midrule
\multirow{2}{*}{$n=6000$}   & DeepONet  &  $6.28 \pm 3.55$ &  $35.62 \pm 14.28$ \\
                            & MIONet      & $ 82.89 \pm 34.19$ &  $ 68.39 \pm 8.71$  \\
                            & \net      &   \bm{$2.74 \pm 1.31$} &  \bm{$6.29\pm2.32$} \\
\bottomrule
\end{tabular}\label{tab:sensitivity_summary}
\end{table}

\revii{}{\subsection{Regularized eikonal equation}\label{sec:eikonal}}
\revii{}{
We consider the regularized eikonal equation which is useful in modeling wave propagation. It is given by
\begin{equation}\label{eqn:eikonal}
\begin{aligned}
- 0.01 \Delta u (\x) + |\nabla u(x)| &=  f(\x), \quad &&\forall \ \x \in  \Omega, \\
u(\x) &= 0, \quad &&\forall \ \x  \in  \partial \Omega,
\end{aligned}
\end{equation}
where $u(\x)$ is interpreted as the minimal time required to travel from $\x$ to the domain boundary $\partial \Omega$, while $1/f$ represents the speed of travel through the medium. Note that the eikonal equation \eqref{eqn:eikonal} can be reformulated as the advection-diffusion-reaction equation \eqref{eqn:adr1}-\eqref{eqn:adr3} by choosing $\theta \equiv 0.01$, $\rho = 0.0$, $g = 0.0$, $\bm{a} = \nabla u(x) /|\nabla u(x)|$ and $\Gamma_g = \partial \Omega$. Thus, the corresponding discrete weak solution in terms of some basis $\{\phi_i(\x)\}_{i=1}^q$ is given by
\begin{equation}
    u^h(\x) = (\bm{R}^{-1}(\M \F))^\top \bPhi.
\end{equation}
We are interested in approximation the solution operator that maps $f$ to $u$.}

\revii{}{The domain $\Omega$ is chosen as a unit square, while the source function $f$ is given by a Gaussian random field with length scale 0.4, whose values are scaled to lie in $(0.1,2.0)$. The training data is constructed by generating 10,000 realizations of $f$ and obtaining the corresponding solutions using FEniCS on a $32 \times 32$ mesh, which yield 10,000 pairs of $(f,u)$. We use 8,000 samples for training and validation and retain 2,000 for testing.}

\revii{}{To train the operator networks, $f$ is sampled on the same $32 \times 32$ mesh and fed as input to the branch. In other words, there are 1024 sensor nodes. For a given $f$, the target solution is evaluated at 140 output nodes randomly chosen from the $32 \times 32$ mesh, where the predicted solution values are matched in the training loss function. Motivated by the performance of the operators with the linear PDE model in the previous section, and the solution features of the eikonal problem, we continue to use an RBF trunk. All networks use a latent dimension $p=100$ but have different branch architectures. We recall the key properties of the weak form discussed in Section \ref{sec:nonlinear} which we are interested in mimicking:
\begin{itemize}
    \item The sampled input function needs to be compressed by a linear transform to a vector of dimension that equals the latent dimension $p$. Since the present problem has a single input function, i.e., the \textit{sum} block shown in Figure \ref{fig:NLvarmion} is redundant, we can consider the branch sub-network to be a single feed-forward network whose first hidden layer does not have a bias term and has a width $p=100$, irrespective of the input dimension (1024 in this case). One can argue that this leads to an architecture that is an instance of a DeepONet. However, in standard practice, one would construct the branch of a DeepONet to i) include a bias in every hidden layer, and ii) gradually decay the width of the hidden layers with increasing depth. This is what distinguishes a vanilla DeepONet from the proposed VarMiON architecture.
    \item We want to preserve the homogeneous nature of the solution operator. This is achieved be considering the constrained VarMiON-c architecture (see Figure \ref{fig:NLvarmion}(b)), but which also satisfies the above property of compressing the branch input vector to a $p$ dimensional vector.
\end{itemize} }

\revii{}{We consider a number of architectures for the eikonal problem, a summary of whose performance is given in Table \ref{tab:result_summary_NL}. Note that the numbers in the brackets denotes the widths of the hidden layers in the branch. The details of their architectures can be found in the Appendix. As is evident from Table \ref{tab:result_summary_NL} and the error histograms shown in Figure \ref{fig:nonlinear_hist}, both VarMiON architectures lead to the best performance in terms of the relative $L_2$ error on the test samples, with the constrained VarMiON-c architecture performing marginally better. We also consider DeepONet architectures with a branch that compresses the input vector of size 1024 more gradually. Keeping the number of trainable parameters similar to the VarMiONs affords us a DeepONet branch with a single hidden layer of width 130. The error with this operator network is more than twice of the VarMiON networks. Increasing the hidden layer width to be 200 pulls down the mean error but at the cost of having more trainable parameters. We also look at the extreme case where the DeepONet branch has as many layers as the VarMiONs but with a gradually tapering down architecture, i.e., we consider four hidden layers with widths 512, 256, 128 and 100. With this DeepONet architecture, the error is brought down to the range of the VarMiONs. However, the number of parameters is five times that of the VarMiONs, which leads to larger training and evaluation times. These results indicate that using a branch whose first hidden layer compresses the input to a vector of size of the latent dimension leads to the optimal results, while also controlling the size of the network. Further, enforcing the solution operator's homogeneous constraint leads to the expected $u=0$ predictions when $f=0$, as shown in Figure \ref{fig:nonlinear_zero_preds}. The predictions using the VarMiON and DeepONet architectures with similar sizes on a few test samples are shown in Figure \ref{fig:nonlinear_preds}.}  
\begin{table}[ht]
\renewcommand{\arraystretch}{1.}
\centering
\caption{\revii{}{Summary of operator network performance for the eikonal problem. The numbers in the brackets denite the widths of the hidden layers in the branch.}}
\begin{adjustbox}{width=0.85\linewidth}
\begin{tabular}{c c c}
\toprule
Model & Number of parameters & Relative $L_2$ error \\
\toprule
DeepONet (130) & 146,650 & 5.78 $\pm$ 1.50 \% \\
DeepONet (200) & 225,400 & 4.79 $\pm$ 1.46 \% \\
DeepONet (512,256,128,100) & 712,324 & 2.35 $\pm$ 0.36 \% \\
VarMiON (100,100,100,100) & 143,200 & 2.44 $\pm$ 0.41 \% \\
VarMiON-c (100,100,100,100) & 143,200 & 2.21 $\pm$ 0.43 \% \\
\bottomrule
\end{tabular}\label{tab:result_summary_NL}
\end{adjustbox}
\end{table}

\begin{figure}[htbp]
\begin{center}
\includegraphics[width=\textwidth]{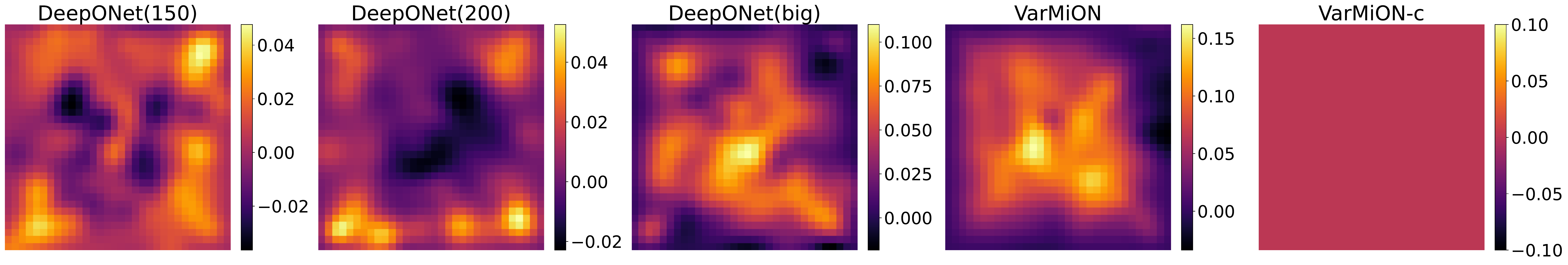}
\caption{\revii{}{Eikonal solution predictions for a zero source input. DeepONet(big) corresponds to DeepONet (512,256,128,100).}}
\label{fig:nonlinear_zero_preds}
\end{center}
\end{figure} 

\begin{figure}[htbp]
\begin{center}
\includegraphics[width=\textwidth]{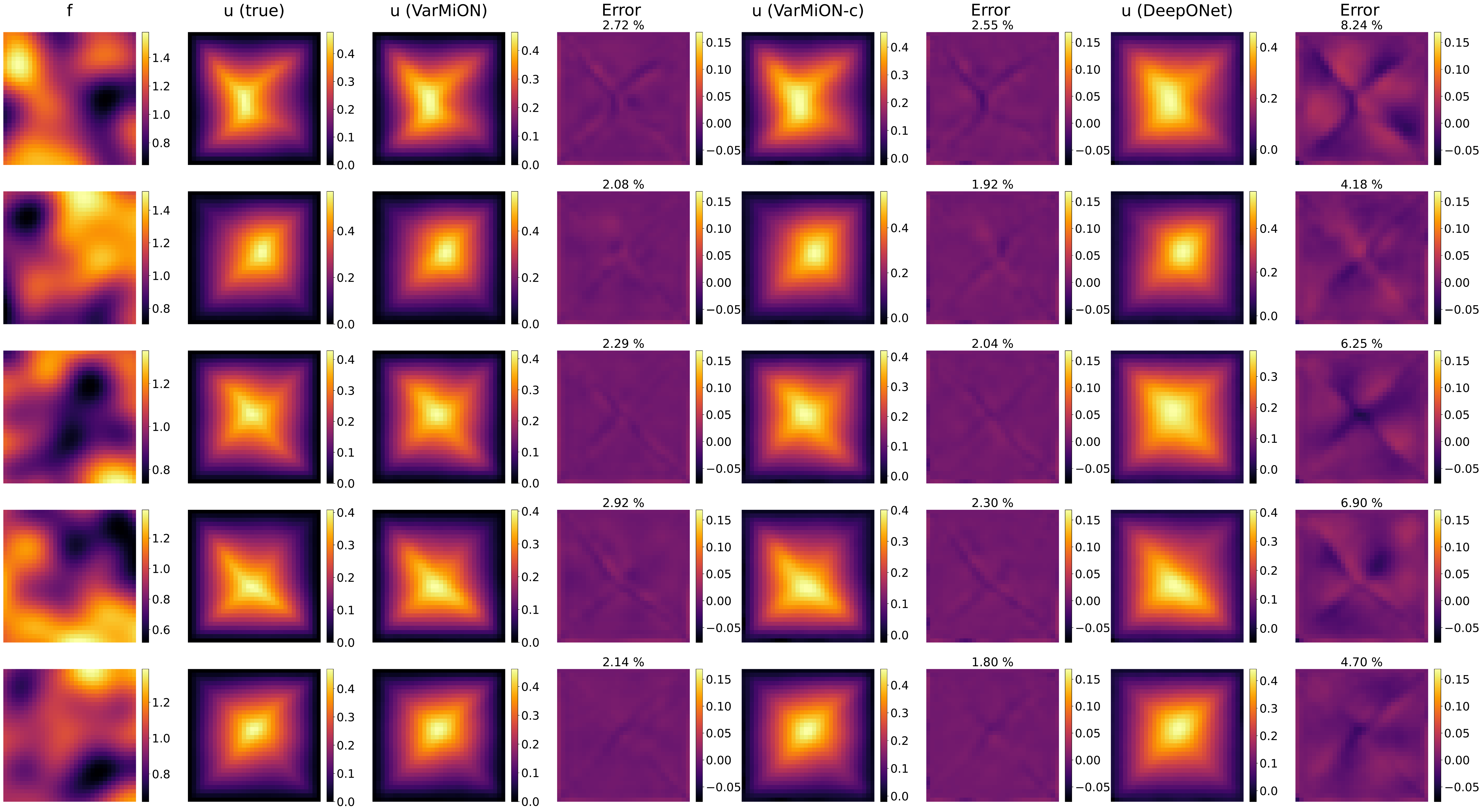}
\caption{\revii{}{\textit{Eikonal solution predictions for five representative samples from the test set}: (First column) source field, (second column) true solution field, (third, fifth and seventh column) solution field prediction by VarMiON (100,100,100,100), VarMiON-c (100,100,100,100) and DeepONet (130) respectively, (fourth, sixth and eighth column) corresponding error (normalized $L_2$ error is shown at the top).}}
\label{fig:nonlinear_preds}
\end{center}
\end{figure}

\begin{figure}[htbp]
\begin{center}
\includegraphics[width=0.99 \linewidth]{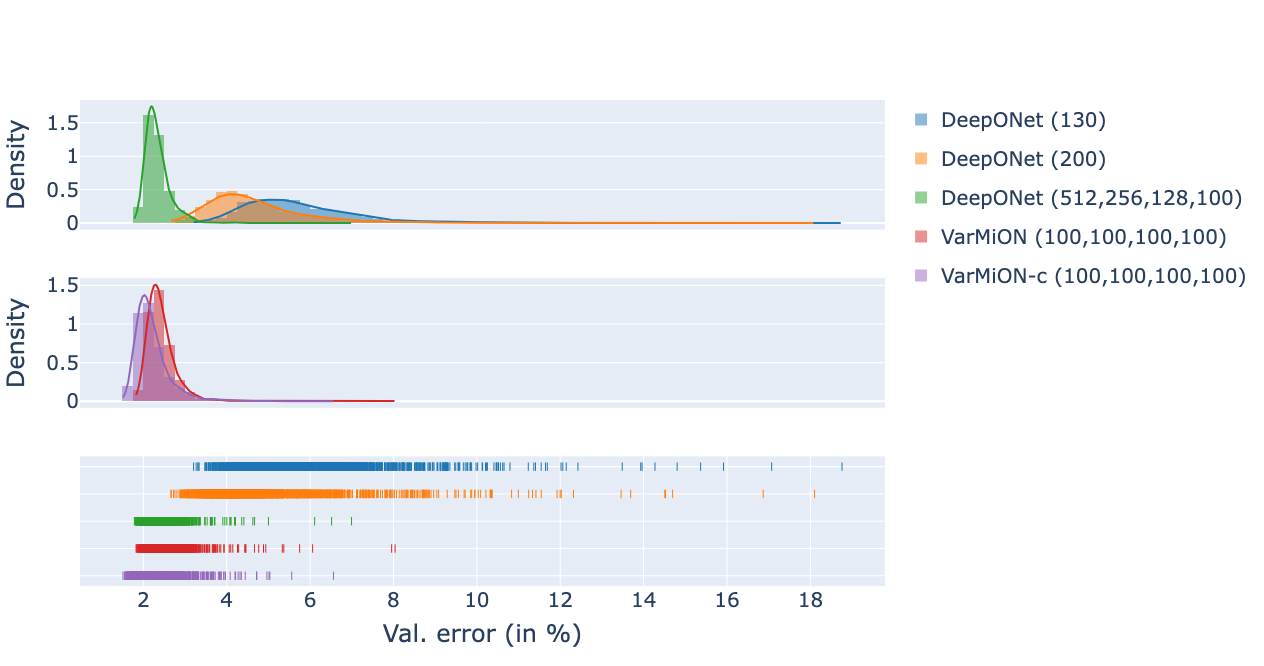}
\end{center}
\caption{\revii{}{Probability density for DeepONets (top), VarMiONs (middle) and rug plot (bottom) of the test error for the cases associated with the eikonal problem. In the rug plot each tick indicates one sample.}}
\label{fig:nonlinear_hist}
\end{figure}

\section{Conclusions}

\label{sec:con}

In this manuscript we have proposed a novel architecture for an operator network that maps input functions for a\reviii{n elliptic}{given} PDE, including forcing functions, boundary data and material property functions, to its solution. This architecture mimics the form of the numerical solution obtained by approximating the corresponding variational formulation of the PDE. For this reason we refer to this operator as the variationally mimetic operator network, or \net. Like the conventional Deep Operator Network (DeepONet), the \net can also be decomposed into branch and trunk networks, where the latter constructs the basis functions for representing the output, and the former generates the coefficients for this expansion. However, in contrast to a conventional DeepONet, the \net prescribes a precise architecture for the branch network. In the \reviii{}{linear PDE} case considered in this manuscript, this involves linear branches for the forcing function and Neumann data vectors and a nonlinear branch for the material property vector that leads to a matrix of small dimension (say, $100 \times 100$). The output of the branch network is obtained by summing the vectors of the linear branches and then computing the product of the material property matrix with the resulting vector. \reviii{}{For the nonlinear PDE problem, variational formulation leads to distinct architecture, where the sum of the output of the linear branches (corresponding to forcing function and Neumann data) is fed to a nonlinear network. Furthermore, an extension of this architecture is also proposed (called VarMiON-c), in which the homogeneity constraint of the underlying solution operator is encoded in the architecture.}

An analysis of the error in the solution generated by this network reveals several important contributions. These include, the training error, error in the solutions used to train the network, quadrature error in sampling the input and output functions during training, and the distance between the test input functions and the ``closest'' functions in the training dataset. This last component is multiplied by the sum of the stability constants for the true and the \net operators. This clear delineation of errors provides the user with a systematic approach to thinking about the performance of \net. Further, the special structure of the \net allows the identification of the precise operators responsible for these properties. 

The application of the \net to a canonical \reviii{}{linear} elliptic PDE \reviii{}{and generic nonlinear PDE} reveals several interesting results. First, for approximately the same number of network parameters, on average the \net incurs smaller error than a vanilla DeepONet \reviii{}{and MIONet}. Second, across large instances of test functions the distribution of this error for the \net is much tighter, thereby indicating that it is more robust to variations in input data. \reviii{Finally}{Third}, we recognize that this performance is robust to techniques used to sampling the input functions (random or regular), to different basis functions (ReLU or radial basis functions) and to the number of inputs functions (two or three). \reviii{}{Finally, VarMiON performs consistently better than baseline methods at various sizes of training datasets.}

There are several directions for future work that emerge from this work. These include the application of the \net as a surrogate model for problems in optimization and uncertainty quantification. 
In the solution to these problems many solutions of the forward problem are required, and the \net can be used to compute these quickly and accurately. 
%operator is applied many times ($10^3-10^6$) to different input functions. This requires the  
\reviii{}{Extending the VarMiON architecture to solve other challenging nonlinear PDE models, and developing the associated theoretical framework to estimate the approximation error would be a fruitful endeavor. While we believe that the philosophy of mimicking the weak form will carry over, the precise architecture of the VarMiON may greatly differ depending on the weak formulation of the PDE model.}
%A natural extension of the \net proposed in this work would be its application to PDE operators ($\mathcal{L}$) with a non-linear dependence on the solution $u$. Since these problems are typically solved using a Newton-Raphson type method, where at each iteration a linearized problem driven by the residual of the PDE is solved \cite{belytschko2014nonlinear,wriggers2008nonlinear}, the architecture developed in this work could be directly applied to linearized problem. 
In a similar vein, the ideas developed in this manuscript may be extended to time-dependent and hyperbolic PDEs, where the \net architecture could be applied to solve a series of linear problems. These, and related ideas, will be explored in future work. 

\section*{Acknowledgements}
AAO and DR acknowledge support from ARO grant W911NF2010050. DP acknowledges support from the Stephen Timoshenko Distinguished Postdoctoral Fellowship at Stanford University. 

%Finally, the and the inclusion of more complex input functions including those that specify the geometry of the problem. 

\newpage 

\appendix

\revii{}{\section{DeepONet, VarMiON and MIONet architectures}}
We descibe the key network blocks used to construct the various DeepONets, \revii{}{MIONets} and VarMiONs considered in this work:  

\begin{itemize}
    
    \item {\tt Dense(k)} denotes a fully connected layer of width {\tt k}.

    \reviii{}{\item {\tt Linear(k)} denotes a fully connected dense layer of width {\tt k} but without a bias vector.}
    
    \item {\tt TrConv(k,n,s)} denotes a 2D transpose convolution with {\tt k} output filters of size {\tt (n,n)} and stride {\tt (s,s)}.
    
    \item {\tt ReLU} denotes the ReLU activation \reviii{}{while \tt{TanS} denotes the TanhShrink activation.}

    \item {\tt BN} denotes batch normalization.

    \item {\tt Reshape(q)} is used to reshape the incoming tensor into a shape specified by the tuple {\tt q}.
    
    % \item {\tt Lin(k)} denotes a fully connected layer of width {\tt k} and without a bias vector. This is a fully linear layer.
    
    \item {\tt RBF(n,m)} denotes a reduced basis function layer that evaluates $m$ scalar-valued parametrized functions on an $n$ dimensional input. In particular, for an input vector $\x \in \Ro^n$, the output $\y \in \Ro^m$ is given by
    \[
    y_i = \exp{\left(-\frac{\|\x - \bm{c}_i\|^2}{\sigma_i^2}\right)} \quad 1 \leq i \leq m,
    \]
    where vectors $\bm{c}_i \in \Ro^n$ and the scalars $\sigma_i$ are trainable parameters.

    \reviii{}{\item \tt{Input()} and \tt{Output()} denote the size of the input and output vectors, and are not computable layers.}
\end{itemize}

\subsection{Two input functions with randomly sampled input} 
\textbf{DeepONet branch:}\\
\texttt{
Input(200,1) \ra Dense(170) \ra ReLU \ra Dense(170) \ra ReLU \ra Dense(64) \ra  Output(64,1)
}

\textbf{VarMiON $\Th$ branch:}\\
\texttt{
Input(100,1) \ra Dense(100) \ra ReLU \ra Dense(512) \ra ReLU \ra Reshape(4,4,32) \ra TrConv(16,2,2) \ra ReLU \ra BN \ra TrConv(16,2,2) \ra ReLU \ra BN \ra TrConv(8,2,2) \ra ReLU \ra BN\ra TrConv(1,2,2) \ra \reviii{ReLU}{TanS} \ra Output(64,64)
}

\textbf{VarMiON $\Fh$ branch:}\\
\texttt{
Input(100,1) \ra \reviii{Dense}{Linear(64)} \ra Output(64,1)
}

\textbf{DeepONet/VarMiON trunk:}\\ 
\texttt{
Input(2,1) \ra Dense(100) \ra ReLU \ra Dense(100) \ra ReLU \ra Dense(100) \ra ReLU \ra Dense(100) \ra ReLU \ra Dense(64) \ra Output(64,1)
}

Also see schematics in Figure \ref{fig:arch_2input_rand}.

\begin{figure}[htbp]
\begin{center}
\subfigure[DeepONet]{
\includegraphics[width=0.35\textwidth]{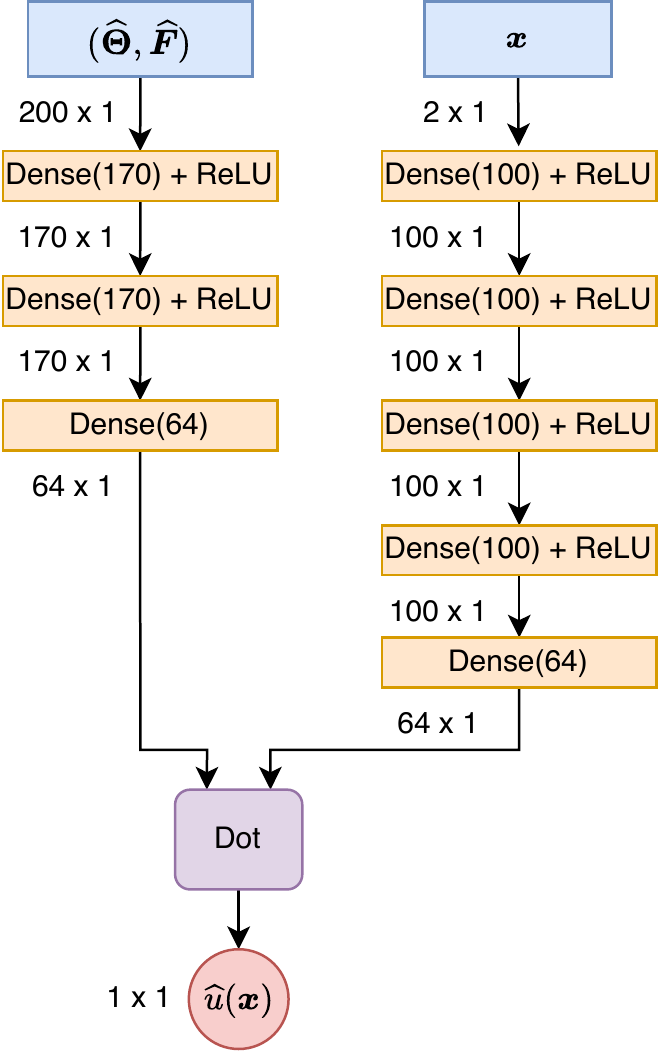}}\hfill
\subfigure[VarMiON]{
\includegraphics[width=0.55\textwidth]{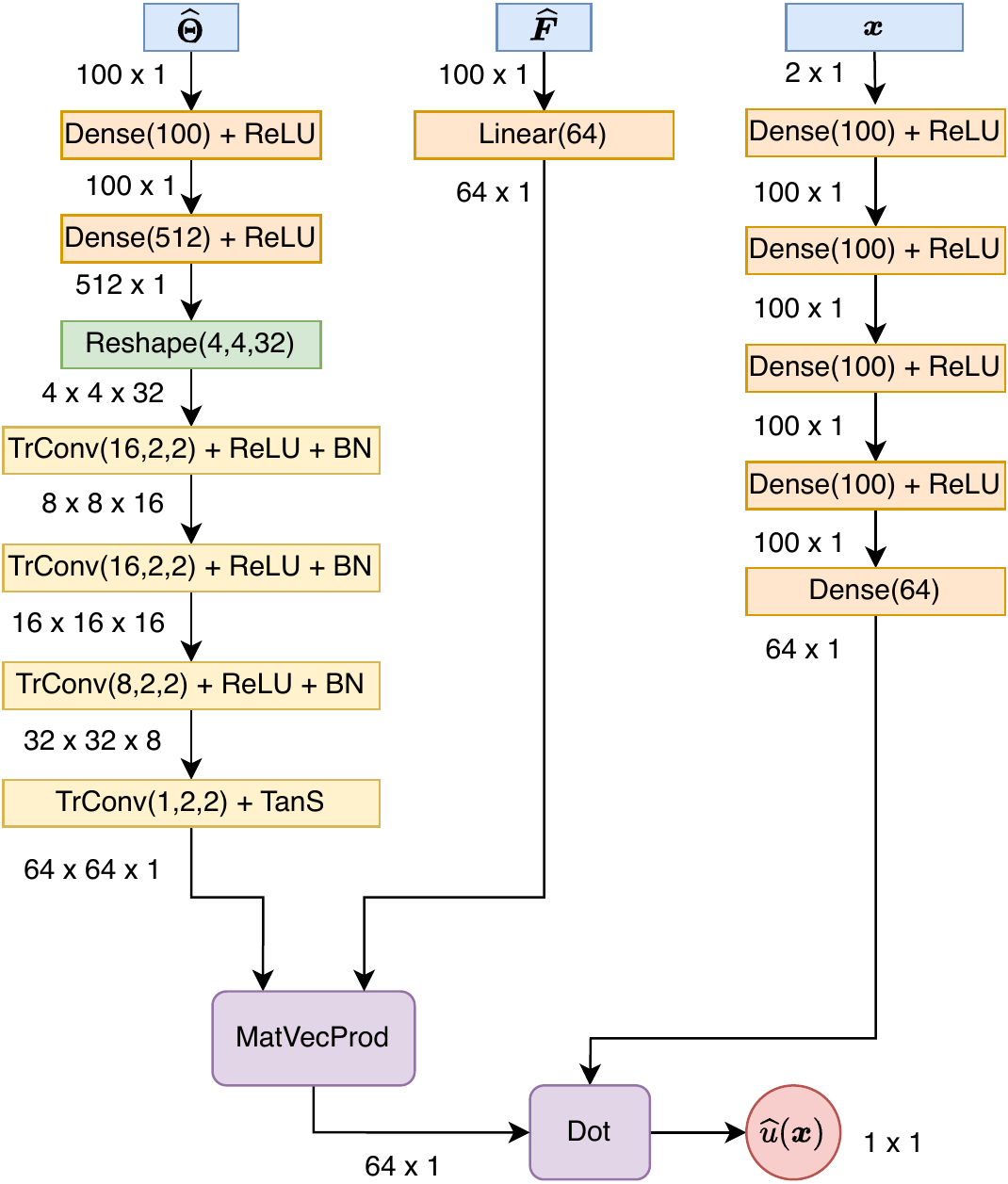}}
\caption{\reviii{}{Operator architectures for two input functions with randomly sampled input}}
\label{fig:arch_2input_rand}
\end{center}
\end{figure}

\subsection{Two input functions with uniformly sampled input and ReLU trunk} \label{app:a2}

\textbf{DeepONet branch:}\\
\texttt{
Input(200,1) \ra Dense(64) \ra ReLU \ra  Output(64,1)
}

\textbf{VarMiON $\Th$ branch:}\\
\texttt{
Input(10,10) \ra TrConv(8,4,1) \ra ReLU \ra BN \ra TrConv(16,4,1) \ra ReLU \ra BN \ra TrConv(8,2,2) \ra ReLU \ra BN\ra TrConv(1,2,2) \ra \reviii{ReLU}{TanS} \ra Output(64,64)
}

\textbf{VarMiON $\Fh$ branch:}\\
\texttt{
Input(100,1) \ra \reviii{Dense}{Linear}(64) \ra Output(64,1)
}

\textbf{DeepONet/VarMiON trunk:}\\ 
\texttt{
Input(2,1) \ra Dense(100) \ra ReLU \ra Dense(100) \ra ReLU \ra Dense(100) \ra ReLU \ra Dense(100) \ra ReLU \ra Dense(64) \ra Output(64,1)
}

Also see schematics in Figure \ref{fig:arch_2input_uniform_relu}.

\begin{figure}[htbp]
\begin{center}
\subfigure[DeepONet]{
\includegraphics[width=0.35\textwidth]{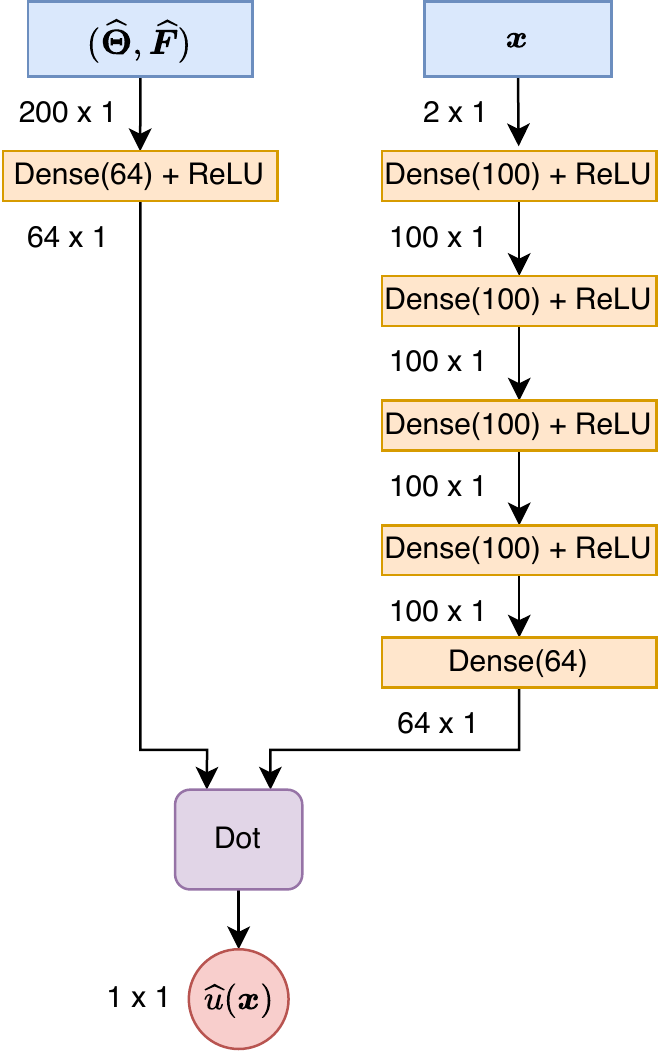}}\hfill
\subfigure[VarMiON]{
\includegraphics[width=0.55\textwidth]{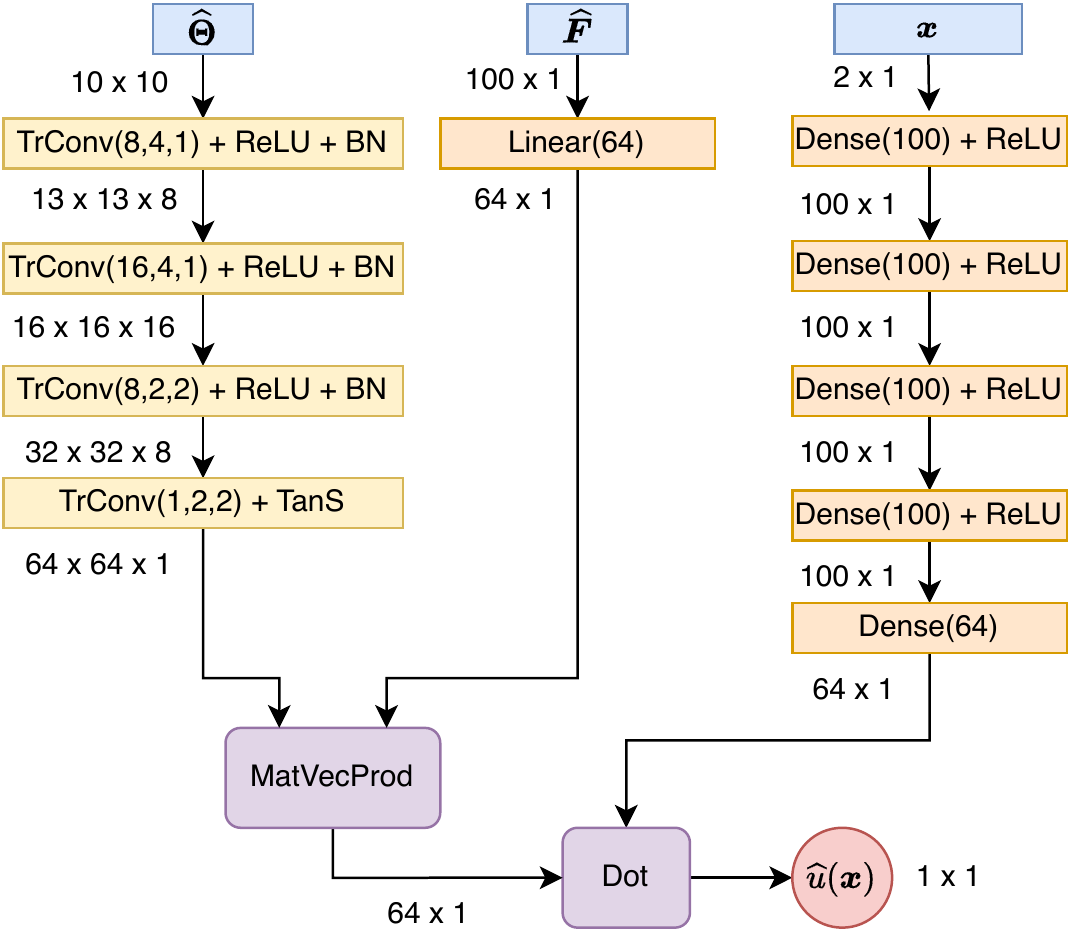}}
\caption{Operator architectures for two input functions with uniformly samples input and ReLU trunk}
\label{fig:arch_2input_uniform_relu}
\end{center}
\end{figure}

\subsection{Two input functions with uniformly sampled input and RBF trunk}

\textbf{DeepONet branch:}\\
\texttt{
Input(200,1) \ra Dense(55) \ra ReLU \ra Dense(55) \ra ReLU \ra Dense(64) \ra  Output(64,1)
}

\textbf{VarMiON $\Th$ branch:}\\
\texttt{
Input(10,10) \ra TrConv(16,4,1) \ra ReLU \ra BN \ra TrConv(32,4,1) \ra ReLU \ra BN \ra TrConv(16,2,2) \ra ReLU \ra BN\ra TrConv(1,2,2) \ra \reviii{ReLU}{TanS} \ra Output(64,64)
}

\textbf{VarMiON $\Fh$ branch:}\\
\texttt{
Input(100,1) \ra \reviii{Dense}{Linear}(64) \ra Output(64,1)
}

\revii{}{\textbf{MIONet $\Th$ branch:}\\
\texttt{
Input(100,1) \ra Dense(64) \ra ReLU \ra Dense(64) \ra Output(64,1)
}}

\revii{}{\textbf{MIONet $\Fh$ branch:}\\
\texttt{
Input(100,1) \ra Linear(64) \ra Output(64,1)
}}

\textbf{DeepONet/VarMiON\revii{}{/MIONet} trunk:}\\  
\texttt{
Input(2,1) \ra RBF(2,64) \ra Output(64,1)
}

Also see schematics in Figure \ref{fig:arch_2input_uniform_rbf}.

\begin{figure}[htbp]
\begin{center}
\subfigure[DeepONet]{
\includegraphics[width=0.35\textwidth]{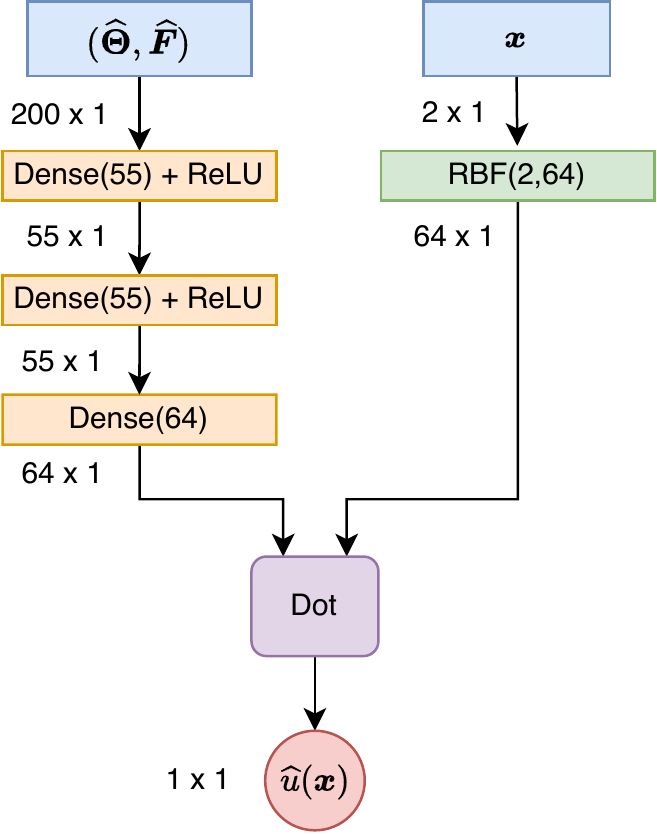}}\hfill
\subfigure[VarMiON]{
\includegraphics[width=0.55\textwidth]{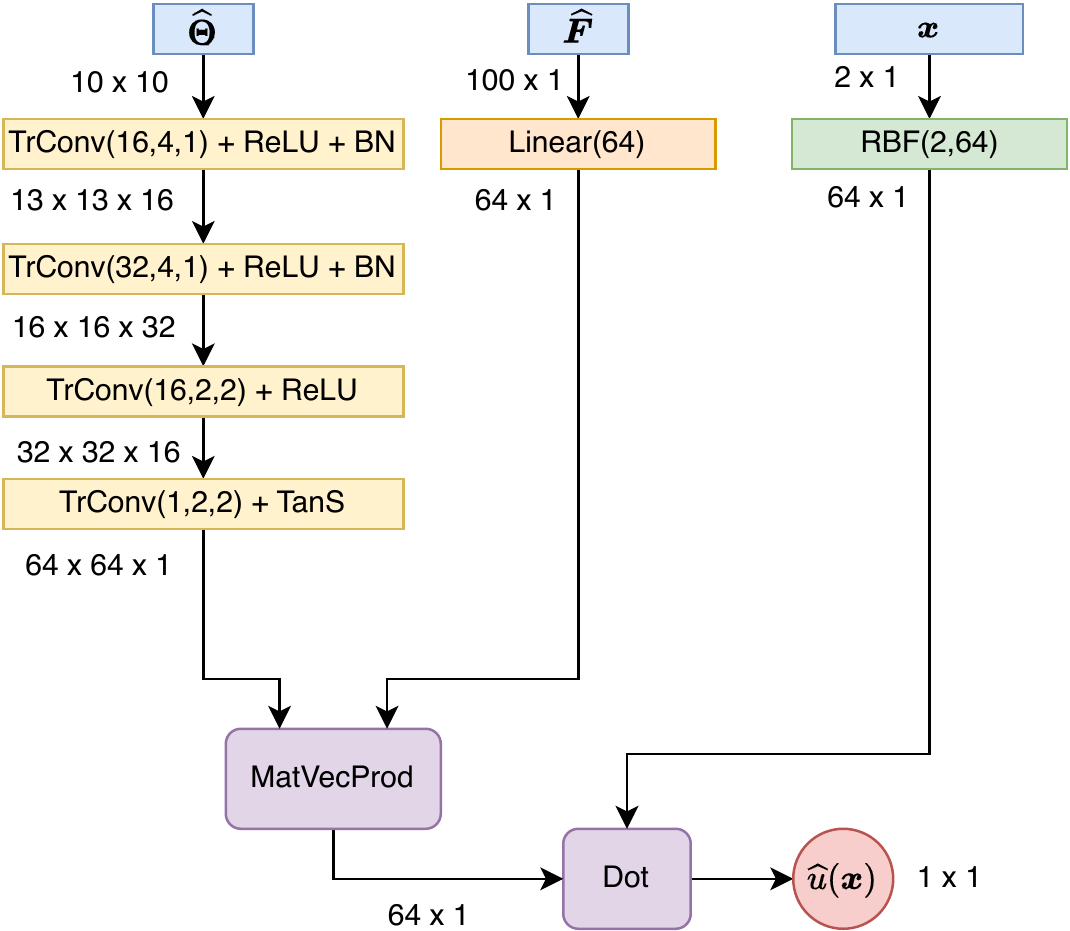}}
\subfigure[MIONet]{
\includegraphics[width=0.55\textwidth]{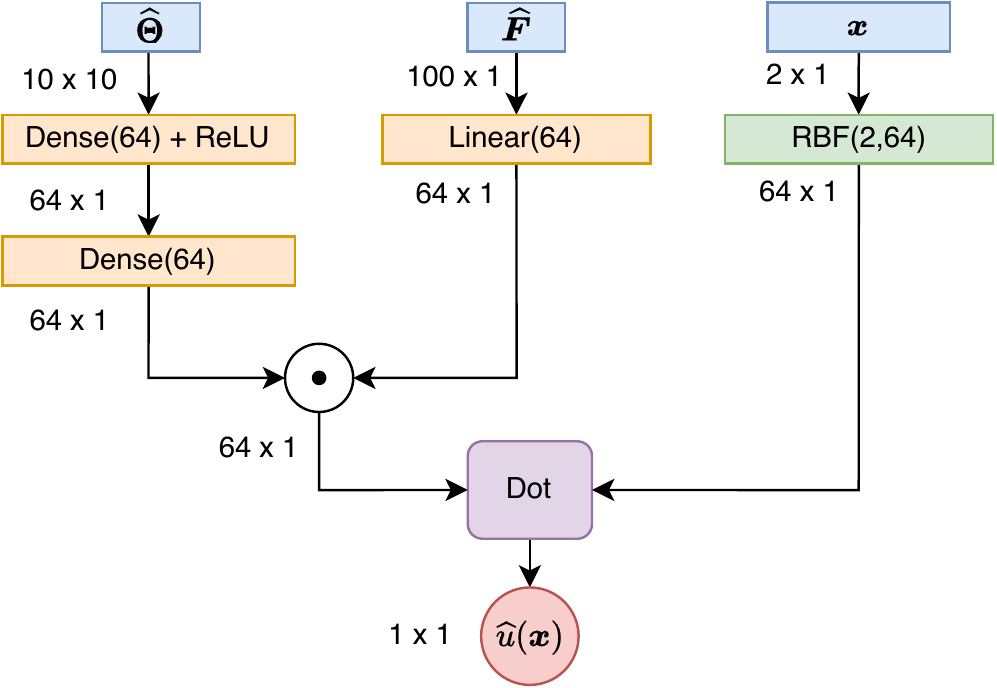}}
\caption{Operator architectures for two input functions with uniformly samples input and RBF trunk}
\label{fig:arch_2input_uniform_rbf}
\end{center}
\end{figure}

\subsection{Three input functions with uniformly sampled input} 
\reviii{}{The input $\Nh$ is built by evaluating the Neumann data at 12 uniformly placed nodes on each of the top and bottom boundaries that form $\Gamma_\eta$.}
\textbf{DeepONet branch:}\\
\texttt{
\reviii{Input(432,1)}{Input(312,1)} \ra Dense(55) \ra ReLU \ra Dense(55) \ra ReLU \ra Dense(72) \ra  Output(72,1)
}

\textbf{VarMiON $\Th$ branch:}\\
\texttt{
Input(12,12) \ra TrConv(16,4,1) \ra ReLU \ra BN \ra TrConv(32,4,1) \ra ReLU \ra BN \ra TrConv(16,2,2) \ra ReLU \ra BN\ra TrConv(1,2,2) \ra \reviii{ReLU}{TanS} \ra Output(72,72)
}

\textbf{VarMiON $\Fh$ branch:}\\
\texttt{
Input(144,1) \ra \reviii{Dense}{Linear}(72) \ra Output(72,1)
}

\textbf{VarMiON $\Nh$ branch:} \reviii{The input is evaluated on $12 \times 12$ grid including the domain boundary nodes. Then all the values at the nodes not on $\Gamma_\eta$ are set to zero.}{}\\
\texttt{
\reviii{Input(144,1)}{Input(24,1)} \ra \reviii{Dense}{Linear}(72) \ra Output(72,1)
}

\revii{}{\textbf{MIONet $\Th$ branch:}\\
\texttt{
Input(144,1) \ra Dense(72) \ra ReLU \ra Output(72,1)
}}

\revii{}{\textbf{MIONet $\Fh$ branch:}\\
\texttt{
Input(144,1) \ra Linear(72) \ra Output(72,1)
}}

\revii{}{\textbf{MIONet $\Nh$ branch:}\\
\texttt{
Input(24,1) \ra Linear(72) \ra Output(72,1)
}}

\textbf{DeepONet/VarMiON\revii{}{/MIONet} trunk:}\\ 
\texttt{
Input(2,1) \ra RBF(2,72) \ra Output(72,1)
}

Also see schematics in Figure \ref{fig:arch_3input_uniform_rbf}.

\begin{figure}[htbp]
\begin{center}
\subfigure[DeepONet]{
\includegraphics[width=0.32\textwidth]{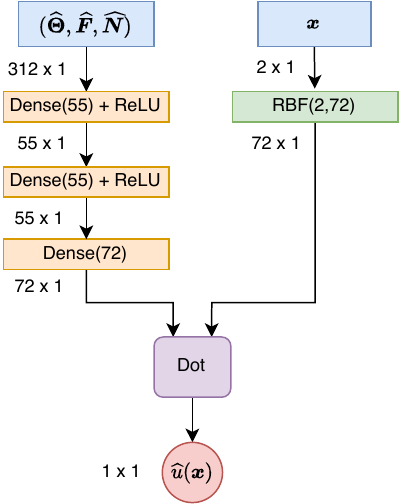}}\hfill
\subfigure[VarMiON]{
\includegraphics[width=0.65\textwidth]{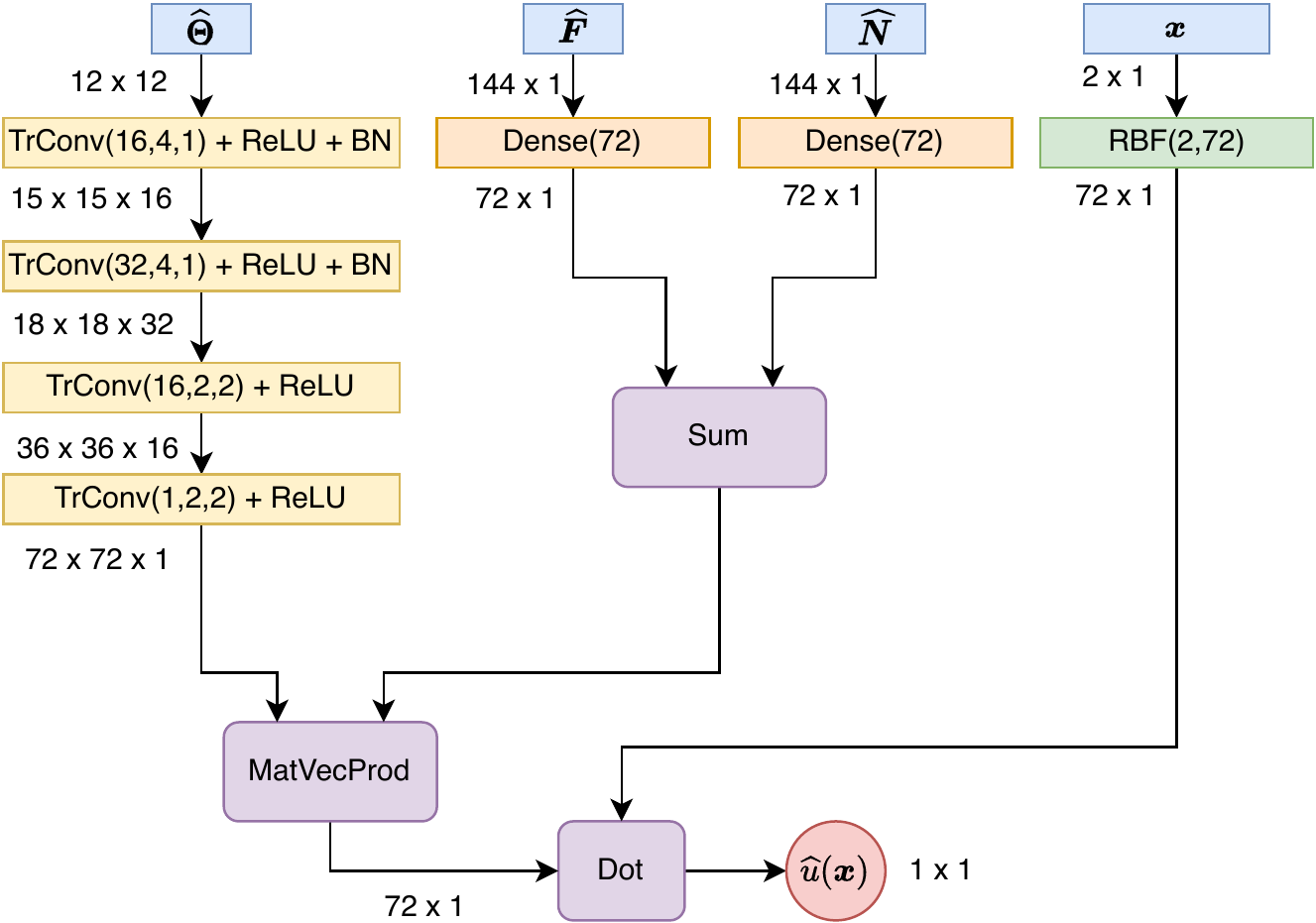}}
\subfigure[MIONet]{
\includegraphics[width=0.65\textwidth]{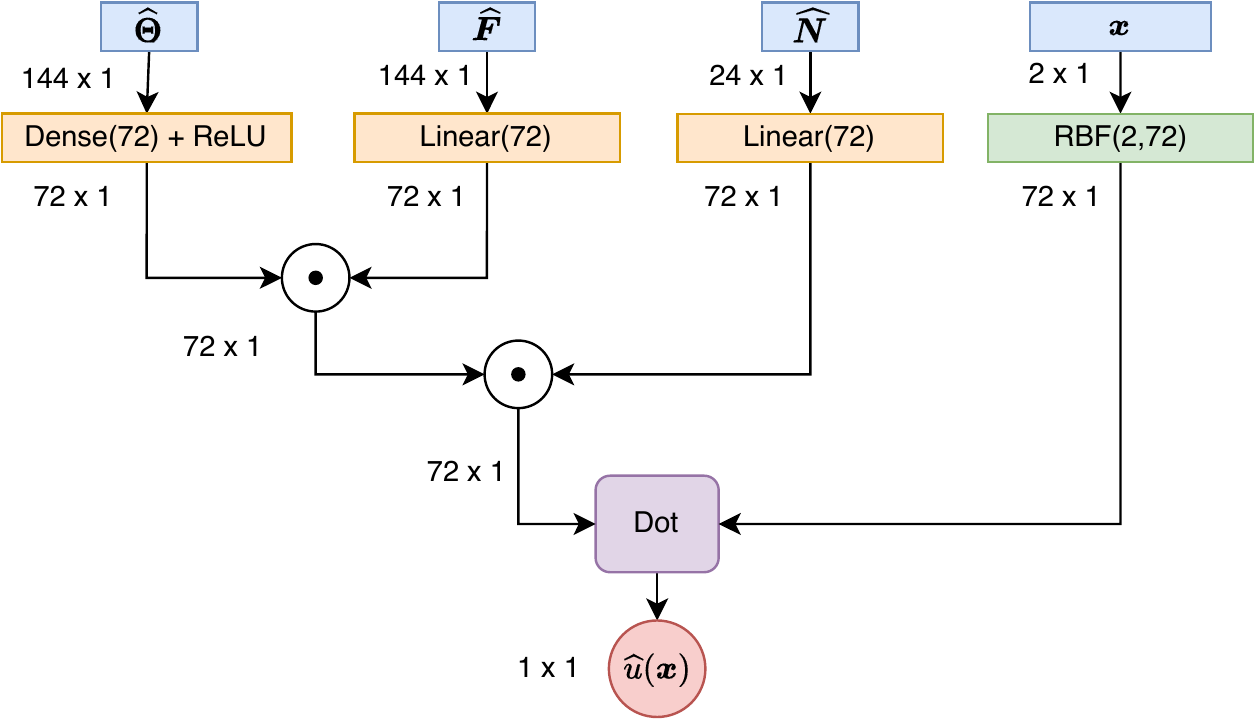}}
\caption{Operator architectures for three input functions with uniformly samples input and RBF trunk}
\label{fig:arch_3input_uniform_rbf}
\end{center}
\end{figure}

\revii{}{\subsection{Nonlinear problem}} 

\revii{}{\textbf{DeepONet (130) branch:}\\
\texttt{
Input(1024,1) \ra Dense(130) \ra ReLU \ra Dense(100) \ra  Output(100,1)
}}

\revii{}{\textbf{DeepONet (200) branch:}\\
\texttt{
Input(1024,1) \ra Dense(200) \ra ReLU \ra Dense(100) \ra  Output(100,1)
}}

\revii{}{\textbf{DeepONet (512,256,128,100) branch:}\\
\texttt{
Input(1024,1) \ra Dense(512) \ra ReLU \ra Dense(256) \ra ReLU \ra Dense(128) \ra ReLU \ra Dense(100) \ra ReLU \ra Dense(100) \ra  Output(100,1)
}}

\revii{}{\textbf{VarMiON (100,100,100,100) branch:}\\
\texttt{
Input(1024,1) \ra Linear(100) \ra ReLU \ra Dense(100) \ra ReLU \ra Dense(100) \ra ReLU \ra Dense(100) \ra ReLU \ra Dense(100) \ra  Output(100,1)
}}

\revii{}{\textbf{VarMiON-c (100,100,100,100) branch:}\\
\texttt{
Input(1024,1) \ra Z = Linear(100) \ra ReLU \ra Dense(100) \ra ReLU \ra Dense(100) \ra ReLU \ra Dense(100) \ra ReLU \ra Dense(100) $\odot$ Z \ra Output(100,1)
}}

\revii{}{\textbf{DeepONet/VarMiON/VarMiON-c trunk:}\\ 
\texttt{
Input(2,1) \ra RBF(2,100) \ra Output(100,1)
}}

Also see schematics in Figure \ref{fig:arch_NL}.

\begin{figure}[htbp]
\begin{center}
\subfigure[DeepONet (130)]{
\includegraphics[width=0.32\textwidth]{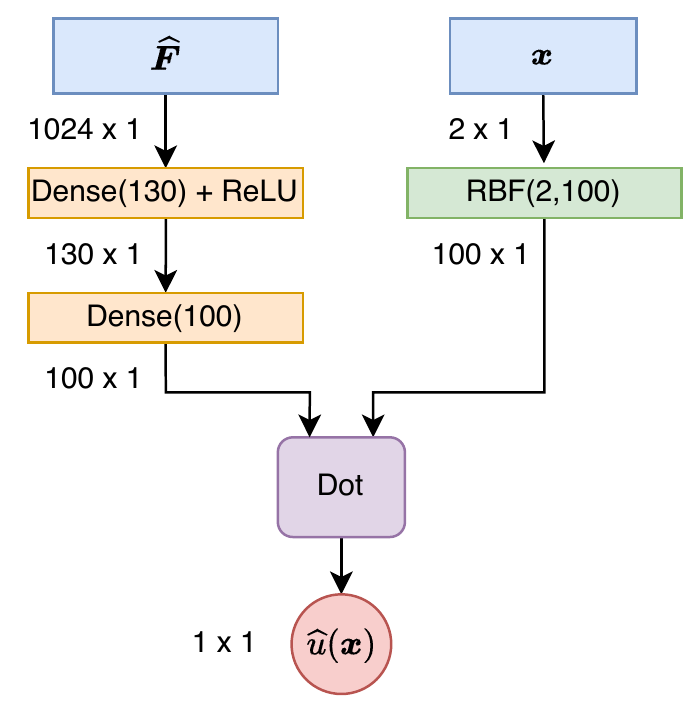}}\hfill
\subfigure[DeepONet (200)]{
\includegraphics[width=0.32\textwidth]{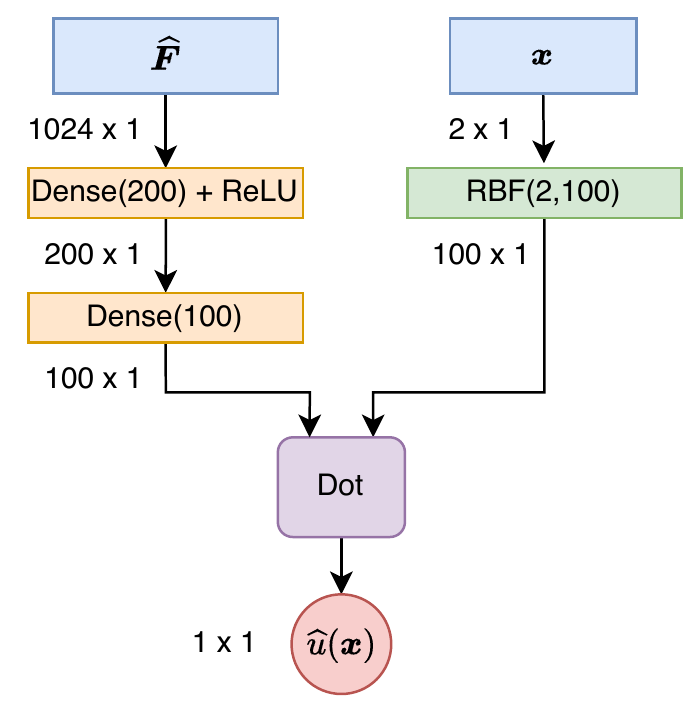}}\hfill
\subfigure[DeepONet (512,256,128,100)]{
\includegraphics[width=0.32\textwidth]{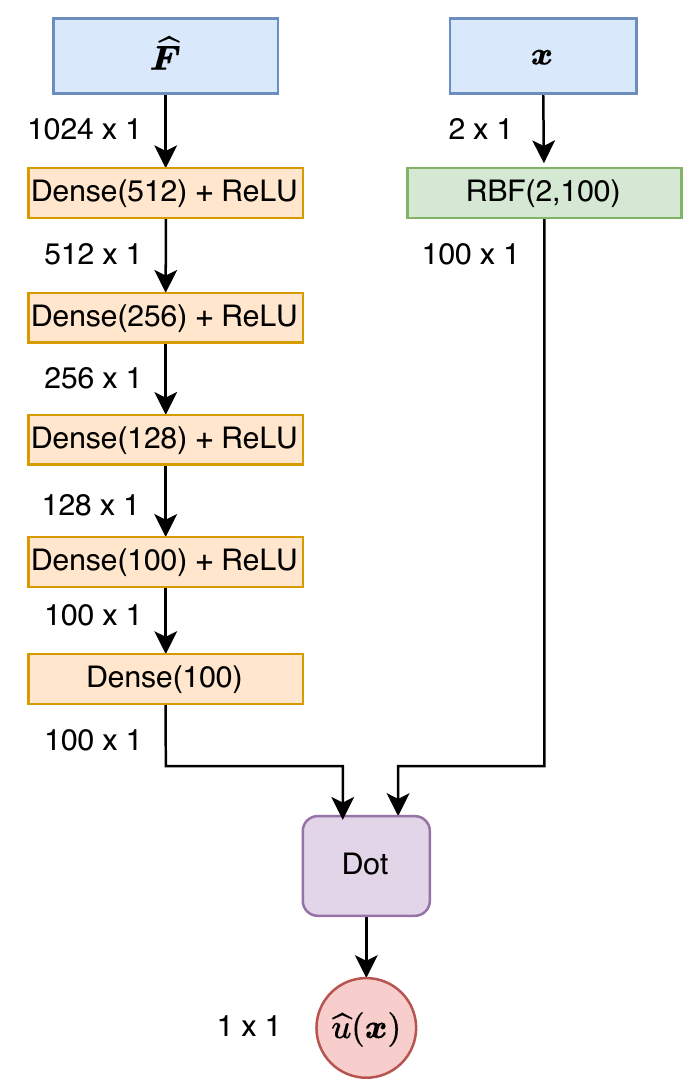}}\hfill
\subfigure[VarMiON (100,100,100,100)]{
\includegraphics[width=0.33\textwidth]{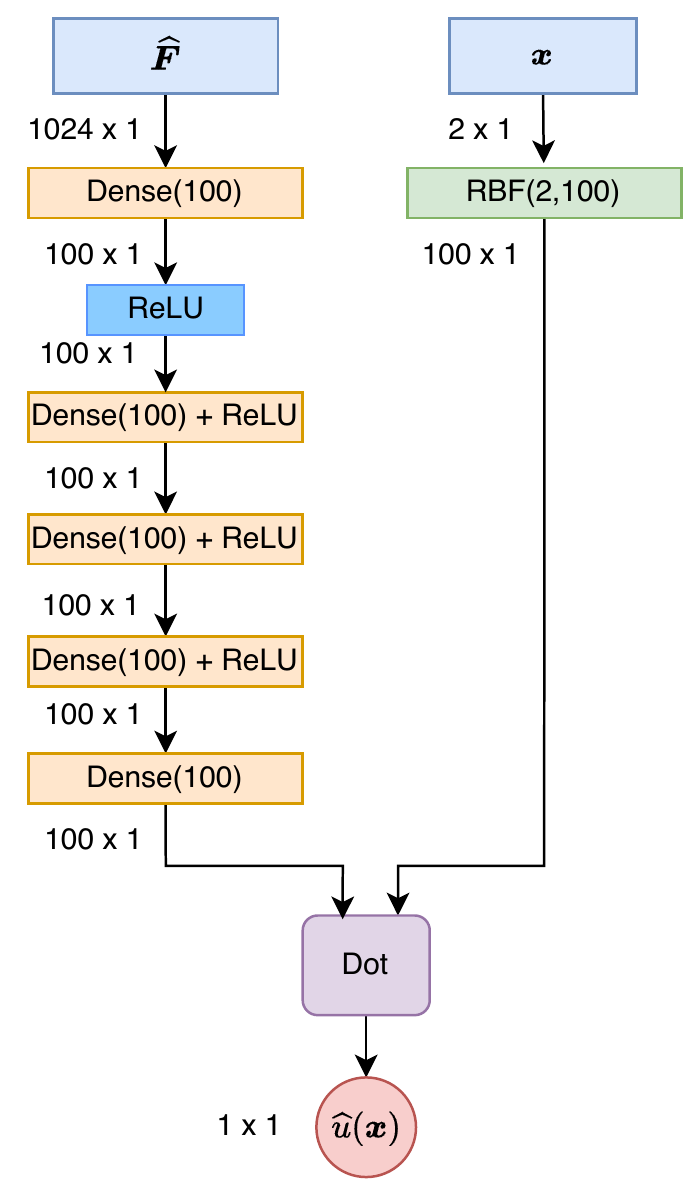}}\hfill
\subfigure[VarMiON-c (100,100,100,100)]{
\includegraphics[width=0.33\textwidth]{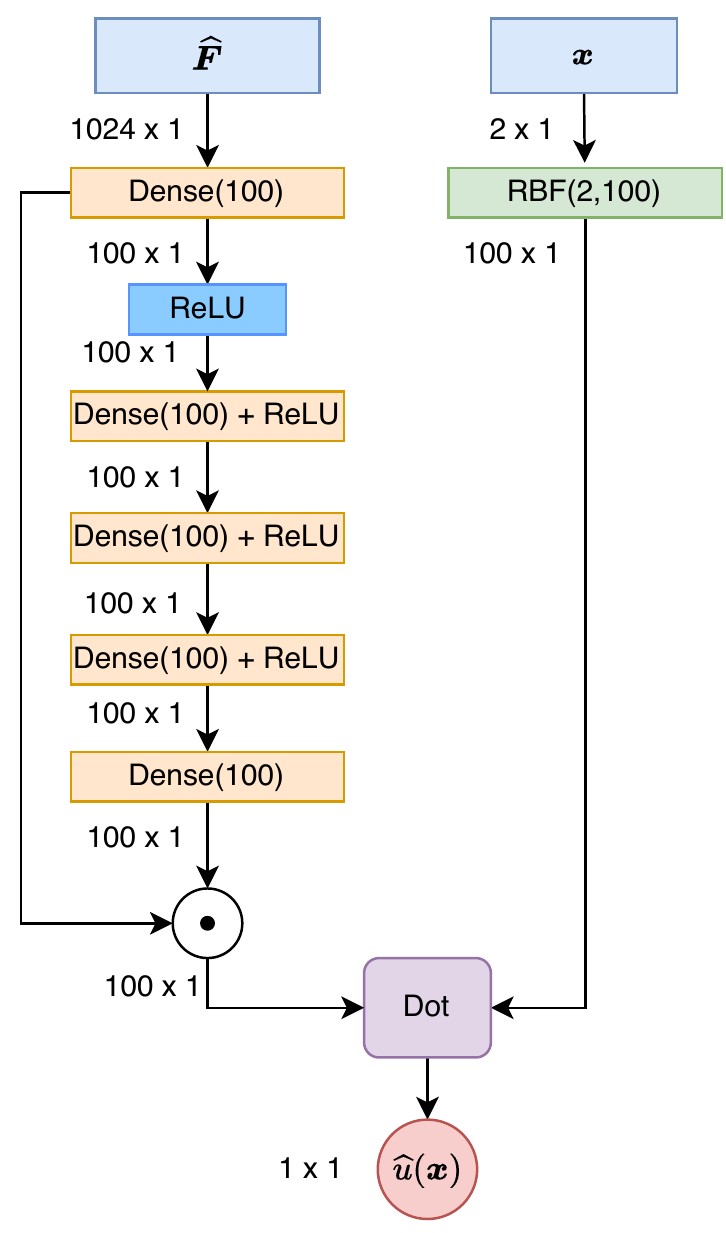}}
\caption{Operator architectures with RBF trunk for eikonal problem}
\label{fig:arch_NL}
\end{center}
\end{figure}

\newpage 

\bibliographystyle{plain} 
\bibliography{refs}

\end{document}